\documentclass{article}
\usepackage{amsmath, amssymb, graphicx, algpseudocode, empheq}
\usepackage{mathtools}
\usepackage{algorithm}
\usepackage{subcaption}
\usepackage{empheq}
\usepackage[english]{babel}
\usepackage{amssymb}
\usepackage[letterpaper,top=2cm,bottom=2cm,left=3cm,right=3cm,marginparwidth=1.75cm]{geometry}
\usepackage{amsthm}
\usepackage{booktabs}
\usepackage{mathrsfs}
\usepackage{algorithm}  
\usepackage{algpseudocode}
\usepackage{amsmath}
\usepackage{bm}
\usepackage{graphicx}
\usepackage[colorlinks=true, allcolors=blue]{hyperref}
\usepackage{booktabs,tabularx,array} \usepackage[numbers,sort&compress]{natbib}
\newtheorem{theorem}{Theorem}
\numberwithin{theorem}{section}

\usepackage[makeroom]{cancel}
\newtheorem{remark}{Remark}[section]

\newcommand{\omb}{{\boldsymbol\omega}}
\newcommand{\omg}{{\hat{\Omega}^s}}
\usepackage{authblk}
\newenvironment{keywords}
{\par\medskip\noindent\textbf{Keywords:}\ }
{\par\medskip}

\newenvironment{MSCcodes}
{\par\medskip\noindent
\textbf{Mathematics Subject Classification:}\ }
{\par\medskip}

\title{A Coupled Fluid–Naghdi Shell–Windkessel Model for Pulmonary Artery Banding: Partitioned Numerical Scheme and Parameter Estimation}

\author[1]{Yifan Wang}
\author[2]{Josip Tamba\v{c}a}
\author[3]{Sun\v{c}ica \v{C}ani\'{c}*}

\affil[1]{
Department of Mathematics and Statistics,
Texas Tech University, Lubbock, TX, USA\\
\texttt{yifan.wang@ttu.edu}
}

\affil[2]{
Department of Mathematics, Faculty of Science,
University of Zagreb, Croatia\\
\texttt{tambaca@math.hr}
}

\affil[3]{
*Corresponding Author: Department of Mathematics,
University of California, Berkeley, CA, USA\\
\texttt{canics@berkeley.edu}
}

\date{}

\begin{document}
\maketitle

\begin{abstract}
We develop a coupled three-dimensional (3D) fluid-structure interaction model for blood flow through deformable vessels with localized constriction, motivated by pulmonary artery banding in neonates with congenital heart disease.
The model couples the incompressible Navier-Stokes equations in an Arbitrary Lagrangian Eulerian (ALE) framework with a 3D Naghdi Shell for the arterial wall, defined on a two-dimensional (2D) reference domain, and a zero-dimensional three-element Windkessel model for downstream circulation.
A central feature of the formulation is the use of a low-regularity Naghdi shell model that retains membrane, bending, and transverse-shear deformation while requiring
only first derivatives of the reference midsurface. This allows the model to accommodate piecewise-smooth vascular geometries, including localized constrictions with discontinuous surface slopes. 

We derive the coupled variational formulation and develop a non-iterative pratitioned scheme to solve the coupled system. The coupling incorporates the inertia of the arterial wall through a Robin-type interface condition. For a linearized fixed-domain problem, we establish a discrete energy estimate under suitable assumptions.
To account for uncertainty in the downstream circulation, we further couple the model with a reduced-state unscented
Kalman filter to estimate the proximal resistance, distal resistance, and compliance from inlet-pressure
observations. Our numerical experiments demonstrate consistent pressure-wave propagation and wall deformation under spatial and temporal
refinement. Comparisons between the standard Naghdi shell and a modified, predominantly radial shell model illustrate
the influence of structural deformation mechanisms on pressure-wave speed and displacement. Moreover, an idealized pulmonary artery banding example further
demonstrates the usage and application of the coupled Fluid-Naghdi shell-Windkessel model with parameter estimation. The resulting framework provides a basis for investigating the hemodynamic and mechanical effects of localized vascular constrictions while accounting for the influence of downstream vascular properties.

\end{abstract}
\begin{keywords}
Fluid–structure interaction; Naghdi shell model; Partitioned numerical scheme; Three-element Windkessel model; Unscented Kalman filter; Hemodynamic parameter estimation; Pulmonary artery banding; Congenital heart disease.
\end{keywords}

\begin{MSCcodes}
65M22, 74F10, 76D05, 74K25, 92C50
\end{MSCcodes}

\section{Introduction}
We focus on neonates with congenital heart disease involving a large, nonrestrictive ventricular septal defect, as illustrated in Figure~\ref{fig:vsd}. Because the defect offers little resistance to interventricular flow, the systolic pressures in the two ventricles may become nearly equal. Consequently, the direction and magnitude of the shunt depend primarily on the balance between pulmonary and systemic vascular resistance rather than simply on a pressure difference between the left and right ventricles. As pulmonary vascular resistance decreases after birth, left-to-right shunting generally increases, directing excessive blood flow into the pulmonary circulation. The resulting pulmonary overcirculation can cause pulmonary congestion and volume overload of the left atrium and left ventricle. Therefore, changes in pulmonary or systemic vascular resistance can substantially alter the pulmonary-to-systemic flow ratio and the distribution of cardiac output between the two circulations.
\begin{figure}[htbp]
\centering
\includegraphics[width=0.75\textwidth]{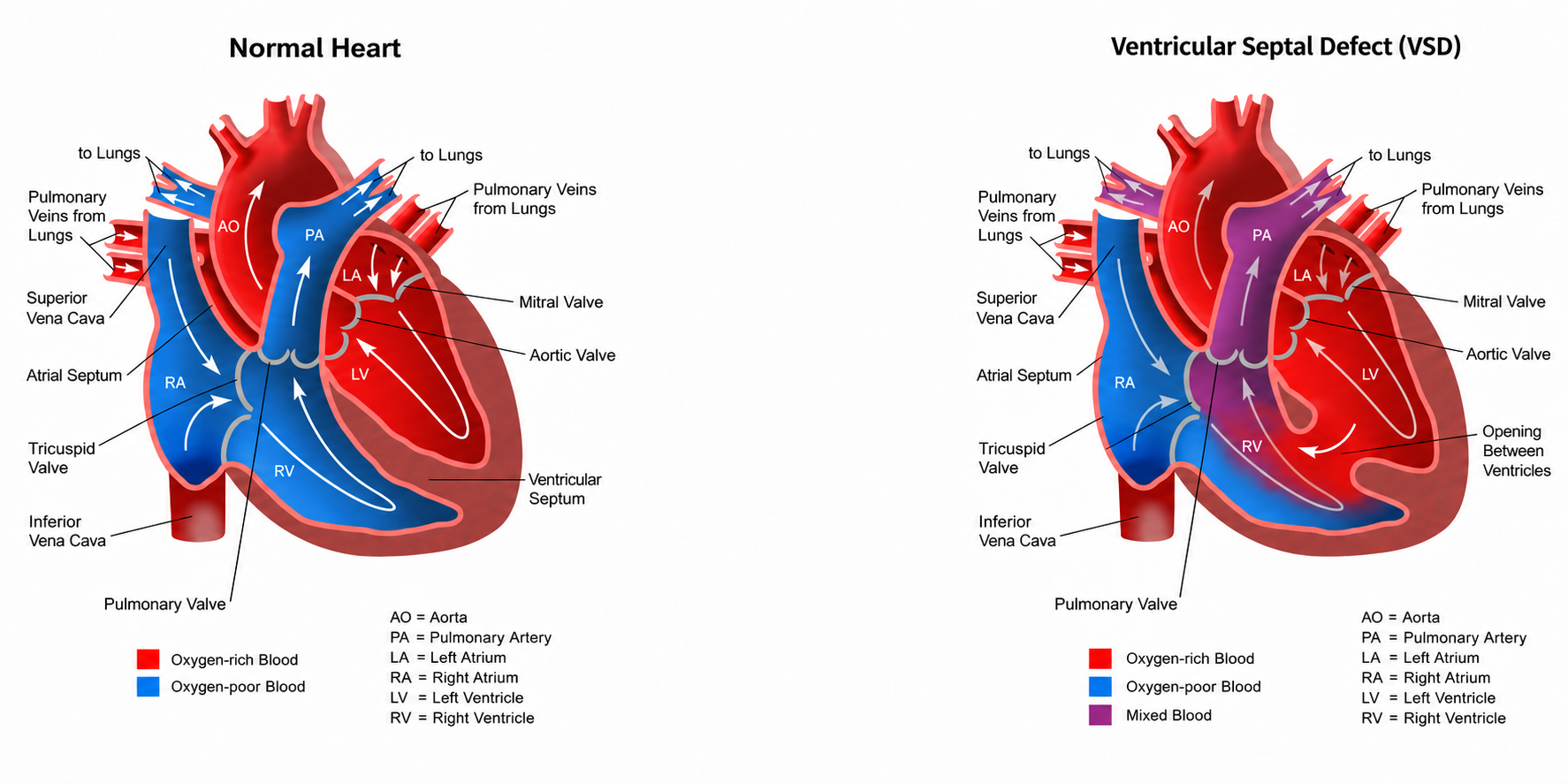}
    \caption{Comparison of a normal heart and a ventricular septal defect \cite{PHC4_2019_PediatricHeartSurgery}.}
    \label{fig:vsd}
\end{figure}

Pulmonary artery banding is a palliative intervention used to limit excessive pulmonary blood flow in selected neonates with congenital heart disease when immediate complete repair is not feasible or is not clinically preferred. The procedure creates a localized constriction in the main pulmonary artery. Its hemodynamic effect, however, depends not only on the band diameter and axial width, but also on the shape and position of the constriction, the native vessel geometry and wall properties, the incoming flow, and the resistance and compliance of the downstream pulmonary circulation. A band that is too loose may fail to reduce pulmonary overcirculation sufficiently, whereas a band that is too tight may produce an excessive pressure drop, increase right-ventricular afterload, and reduce pulmonary blood flow and systemic oxygen delivery. In clinical practice, empirical sizing rules and intraoperative measurements of pressure and oxygen saturation are used to guide band adjustment, but these limited measurements cannot uniquely determine the optimal patient-specific band configuration or the properties of the distal pulmonary circulation \cite{sharma2012pulmonary,trusler1972method,durandy2013pulmonary}. This limitation motivates a mathematical framework that distinguishes the local mechanical effects of the band from the influence of the unresolved downstream pulmonary vasculature.
\begin{figure}[htbp]
    \centering
    \begin{subfigure}[c]{0.32\linewidth}
        \centering
        \includegraphics[width=\linewidth]
        {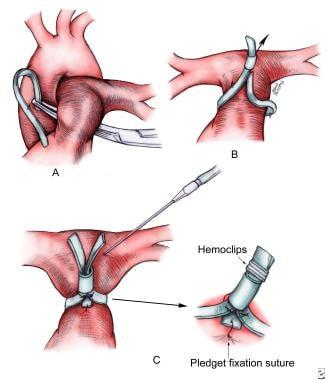}
        \caption{Pulmonary artery banding procedure \cite{chaudhary_pulmonary_artery_banding}.}
        \label{fig:pab-procedure}
    \end{subfigure}
    \hfill
    \begin{subfigure}[c]{0.58\linewidth}
        \centering
        \includegraphics[width=\linewidth]
        {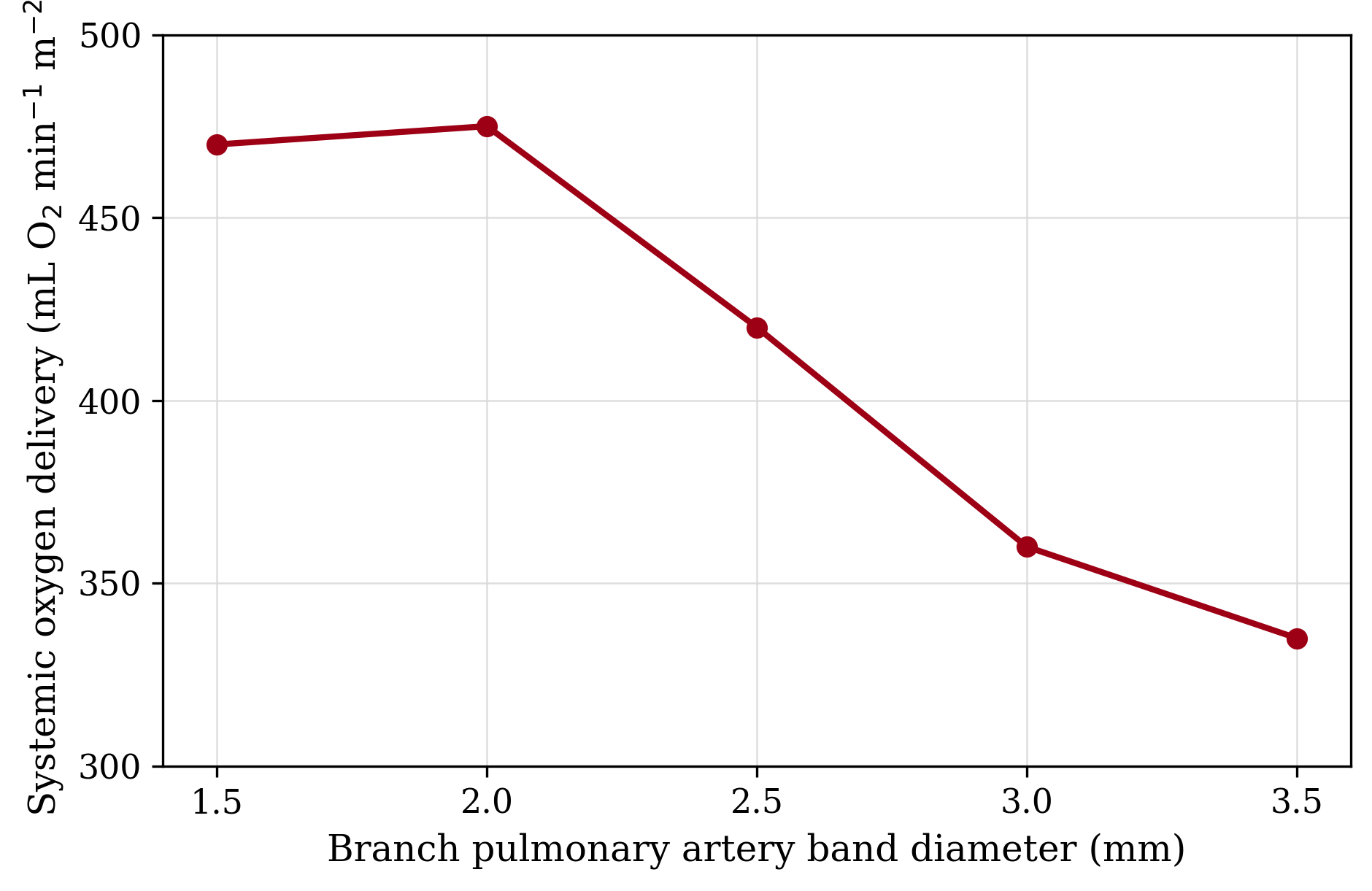}
        \caption{Systemic oxygen delivery as a function of pulmonary
        artery band diameter, provided by Stephen John, MD at UTHealth.}
        \label{fig:pab-oxygen-delivery}
    \end{subfigure}
    \caption{Pulmonary artery banding procedure and its predicted effect on
systemic oxygen delivery. Panel~(a) illustrates the placement, tightening,
and fixation of a band around the main pulmonary artery to restrict excessive
pulmonary blood flow. The band is secured using hemoclips and a pledgeted
fixation suture to reduce the risk of migration ~\cite{angeli2012pulmonary}. Panel~(b) shows the
clinically observed relationship between branch pulmonary artery band diameter
and systemic oxygen delivery. Within the investigated range, systemic oxygen
delivery reaches its maximum near a band diameter of $2.0\,\mathrm{mm}$ and
decreases as the diameter increases further.  }
    \label{fig:pab-comparison}
\end{figure}

Previous computational studies of pulmonary artery banding have investigated the effects of band shape and position on jet formation, flow distribution, and pressure loss using rigid-wall computational fluid dynamics and multiscale circulation models \cite{corno2006noncircular,liu2015effects,baker2013effects}. These studies demonstrate the potential value of virtual banding, but two important coupled effects remain insufficiently resolved. First, a rigid-wall model cannot describe arterial-wall deformation and stress or the feedback of wall motion on the surrounding blood flow. Second, prescribed or manually adjusted outlet conditions may obscure the distinction between pressure changes caused locally by the band and those caused by the impedance of the distal pulmonary circulation. To address these limitations, we therefore consider a three-dimensional (3D) moving-domain fluid-structure interaction (FSI) model coupled to a three-element Windkessel outlet. The Windkessel model represents the unresolved downstream circulation through a proximal resistance, a distal resistance, and a compliance, thereby providing a dynamic relationship between outlet flow and pressure \cite{vignonclementel2010outflow,zambrano2018image}. Complementary reduced-dimensional approaches have also been developed
for blood flow through arteries with spatially varying radius.
In particular, an extended 1D model incorporating the axial variation
of vessel radius was derived and assessed against 3D simulations
for stenotic arteries \cite{CanicEtAl2024Stenosis}.
The present framework resolves the 3D flow together with arterial-wall
displacement and shell-director rotation in a localized constriction.

Because the arterial wall is thin relative to the vessel radius, it can be represented by a two-dimensional (2D) shell model without explicitly resolving the 3D wall thickness. Different shell theories retain different deformation mechanisms. For instance, a membrane model accounts primarily for in-surface stretching, whereas a classical Koiter shell includes both membrane and bending effects while imposing the Kirchhoff–Love constraint of zero transverse shear \cite{ciarlet2000shells, chapelle2003finite}. A Naghdi shell introduces the midsurface displacement and the shell-director rotation as independent unknowns and therefore retains membrane, bending, and transverse-shear deformation \cite{Naghdi1972}. These distinctions are summarized in Table \ref{tab:shell-model-comparison}.
The Naghdi-type formulation is particularly useful when the band transition is represented by a piecewise-smooth mid-surface with limited regularity. In particular, the model of Tambača and Tutek is formulated using only first derivatives of the reference midsurface and remains well defined for globally continuous, piecewise-smooth surfaces with limited regularity, including surfaces with corner-like transitions  \cite{TambacaTutek2016}. Its natural weak formulation involves first-order Sobolev spaces, which permits the use of standard continuous Lagrange finite elements \cite{canic2017dimension}. We therefore represent the pulmonary-artery wall by a Naghdi-type shell defined on an axisymmetric, piecewise-smooth reference midsurface and couple it to an incompressible Newtonian fluid governed by the Navier–Stokes equations. Earlier investigations of vascular fluid--structure interaction
include the analysis of a linear 3D fluid--mesh--shell problem
with a Koiter shell admitting displacement in all three spatial
directions \cite{CanicEtAl2019FluidMeshShell}, as well as
simulations of pulsatile blood flow through curved stented
coronary arteries subject to heart motion
\cite{BukacEtAl2019Stents}.
These studies provide related settings for investigating the
interaction between blood flow, arterial-wall deformation,
and vascular interventions.

\begin{table*}[!t]
\centering
\caption{Comparison of structural wall models used in shell mechanics and
fluid--structure interaction. Here, ``independent rotations'' means that the
rotation of the shell director is an independent unknown rather than being
determined by the midsurface displacement.}
\label{tab:shell-model-comparison}
\footnotesize
\renewcommand{\arraystretch}{1.18}
\setlength{\tabcolsep}{4pt}
\begin{tabularx}{\textwidth}{
    >{\raggedright\arraybackslash}p{0.175\textwidth}
    >{\centering\arraybackslash}p{0.08\textwidth}
    >{\centering\arraybackslash}p{0.10\textwidth}
    >{\centering\arraybackslash}p{0.10\textwidth}
    >{\centering\arraybackslash}p{0.10\textwidth}
    >{\raggedright\arraybackslash}X}
\toprule
Model & Membrane deformation & Bending
& Transverse shear
& Independent rotations
& Best physical regime and representative references \\
\midrule
Naghdi shell
& Yes
& Yes
& Yes
& Yes
& Thin to moderately thick shells, especially when transverse shear,
independent director rotation, or low-regularity midsurfaces are important
\citep{Naghdi1972,BlouzaLeDret2001,TambacaTutek2016}. \\

Classical cylindrical Koiter shell
& Yes
& Yes
& No
& No
& Thin cylindrical shells for which the Kirchhoff--Love constraint is
appropriate; widely used in vascular fluid--structure interaction
\citep{Koiter1960,Ciarlet2000,BukacEtAl2013,MuhaCanic2013}. \\

Reduced Koiter shell
& Yes
& Reduced or model-dependent
& No
& Usually no
& Thin vessels with restricted, often axisymmetric or radial-dominant,
wall motion; the retained bending and displacement components depend on the
particular reduction \citep{QuarteroniTuveriVeneziani2000,
FormaggiaQuarteroniVeneziani2009}. \\

Membrane model enhanced with transverse shear
& Yes
& No full shell bending energy
& Yes
& Model dependent
& Membrane-dominated vascular deformation in which transverse-shear modes
are added to improve the response and stability under normal loading
\citep{OshimaEtAl2001,FigueroaEtAl2006}. \\
\bottomrule
\end{tabularx}
\end{table*}

We discretize the coupled problem using a partitioned operator-splitting method from the family of kinematically coupled fluid–structure interaction schemes. Conventional explicit Dirichlet–Neumann partitioning may suffer from added-mass instability, particularly when a light structure interacts with an incompressible fluid, as occurs in cardiovascular applications. Kinematically coupled schemes mitigate this instability by incorporating the translational inertia of the shell into the fluid subproblem through a Robin interface condition \cite{bukac2013fluid,canic2015stability}. At each time step, the Naghdi shell is first advanced using a selected fraction of the previous fluid traction; the resulting displacement updates the Arbitrary Lagrangian Eulerian (ALE) map and mesh velocity; the fluid velocity, fluid pressure, and Windkessel pressure are then solved together on the updated domain. Related partitioned schemes support stable nonlinear moving-domain computations without fluid-structure sub-iterations at each time step \cite{bukac2015partitioned}. Other related explicit interface-coupling strategies include a fully
parallelizable loosely coupled method for fluid-poroelastic
structure interaction \cite{GuoEtAl2025FPSI, WangLeeCanic2026ErrorAnalysis, WangLeeCanic2026SecondOrder} and a Robin--Robin
partitioned method for an incompressible fluid interacting with
a thick elastic structure \cite{GuoEtAl2025RobinRobin}.
Both approaches advance the coupled subproblems without
fluid--structure subiterations.
Here, we use the translational inertia of the Naghdi shell
in a Robin condition and incorporate the Windkessel outlet
into the fluid subproblem.

Patient-specific outlet calibration introduces a second computational challenge. The finite-element fluid–structure interaction state contains a large number of degrees of freedom, whereas the unknown Windkessel parameter vector contains only the proximal resistance, distal resistance, and compliance. We therefore use a reduced-state unscented Kalman filter in which only these three parameters belong to the filter state \cite{deus2024efficient}. At each measurement time, seven sigma points are generated in the parameter space, propagated through separate fluid–structure interaction simulations, and compared with the observed inlet pressure. The resulting discrepancy is then used to update the parameter estimates and their covariance \cite{pant2014methodological,bertoglio2012sequential,arthurs2020flexible, deus2024efficient}. This approach avoids the tangent and adjoint equations that would be required by derivative-based estimation methods \cite{moireau2011reduced}. The filtering is performed using the logarithms of the three Windkessel parameters, thereby ensuring that all estimated resistances and compliances remain positive.

The framework is designed to support a two-stage calibration-and-prediction workflow. In a future patient-specific application, the downstream Windkessel parameters would first be estimated from measured pulmonary-artery flow and pressure data. The estimated proximal resistance, distal resistance, and compliance would then be held fixed while alternative band geometries and arterial-wall properties are evaluated. This strategy is intended to separate the local hemodynamic effects of the band from uncertainty in the unresolved distal pulmonary circulation. Subject to the assumption that the downstream vascular properties remain unchanged over the time scale of the virtual intervention, the framework could be used to compare candidate band configurations in terms of pressure reduction, pulmonary flow, peak jet velocity, wall deformation, and wall stress.

In the present work, we establish the mathematical and computational components required for this workflow. Section 2 introduces the 3D fluid domain, the variable-radius shell geometry, the Arbitrary Lagrangian–Eulerian fluid formulation, the Naghdi shell model, and the fluid–structure coupling conditions. Section 3 presents the coupled weak formulation. Section 4 develops the non-iterative partitioned scheme, including the structure, mesh-update, and fluid–Windkessel subproblems, while Section 5 derives an energy estimate for a linearized fixed-domain problem. Section 6 formulates the reduced-state unscented Kalman filter for estimating the three positive Windkessel parameters from inlet-pressure observations. The numerical studies are presented in Section 7. In Section 7.1, we verify the 3D fluid–Naghdi-shell implementation using a straight compliant tube subjected to a short pressure pulse. The cross-sectionally averaged pressure and mean-diameter profiles exhibit consistent behavior under both spatial and temporal refinement. In Section 7.2, we examine the influence of the additional Naghdi deformation mechanisms by comparing the standard model with a scaled and penalized formulation designed to produce a predominantly radial, Koiter-like response. Both formulations reproduce the expected pressure-wave and wall-expansion behavior, while the modified model produces a faster-moving pressure front than the standard Naghdi model. Finally, Section 7.3 introduces an idealized banded pulmonary-artery configuration, couples it to a three-element Windkessel outlet, and specifies the physiological and computational setting for pressure-based parameter estimation. Collectively, these studies provide an initial proof of concept for the coupled solver and parameter-estimation framework rather than a completed patient-specific band-optimization study.

\section{Mathematical Model} 

\subsection{Fluid and shell domains}
\label{shelldomain}
We consider a fluid–structure interaction problem in which an incompressible viscous fluid is coupled to a thin elastic shell. The reference midsurface of the shell is axisymmetric and is described by a radius function \(R(x)\), where \(x\in[0,L]\) is the axial coordinate and \(L\) is the vessel length. We assume that \(R\) is strictly positive and Lipschitz continuous, with the same reference radius \(R_0\) at the inlet and outlet:
$$
R\in W^{1,\infty}(0,L),
\qquad
\inf_{x\in[0,L]}R(x)>0,
\qquad
R(0)=R(L)=R_0.
\label{def:R}
$$

The unbanded configuration corresponds to a tube of constant radius,
$$
R(x)=R_0
\qquad\text{for all }x\in[0,L].
$$
While a banded configuration is represented by a profile satisfying
$$
0<R(x)\leq R_0,
$$
where \(R(x)<R_0\) only within a localized axial region and returns continuously to \(R_0\) outside that region. The band radius is defined by
$$
R_b=\min_{x\in[0,L]}R(x),
\label{def:R_b}
$$
whereas the axial extent of the region in which \(R(x)<R_0\) defines the total band width, including any transition zones. To accommodate band profiles composed of finitely many smooth geometric segments, we assume that there exists a partition
$$
0=x_0<x_1<\cdots <x_N=L
$$
such that \(R\) is \(C^1\) on each closed subinterval \([x_{j-1},x_j]\). At each transition point \(x_j\), the radius remains continuous, but its axial derivative may have a jump:
$$
R(x_j^-)=R(x_j^+)=R(x_j),
\qquad
R'(x_j^-)\neq R'(x_j^+).
$$
Consequently, the reference midsurface is a connected Lipschitz surface that may contain circular geometric corners at the transition locations. This regularity is compatible with the Naghdi-type shell formulation for midsurfaces described by \(W^{1,\infty}\) parameterizations; see \cite{TambacaTutek2016}.

\paragraph{Fluid domain}
For a prescribed radius profile $R(x)$, let
$\hat{\Omega}_f\subset\mathbb R^3$ denote the reference fluid domain,
defined by
\[
\hat{\Omega}_f
=
\left\{
\hat{\boldsymbol x}
=
(\hat x,\hat y,\hat z)\in\mathbb R^3:
0<\hat x_1<L,\quad
\hat y^2+\hat z^2<R(\hat x)^2
\right\}.
\]
Its boundary is decomposed as
\[
\partial\hat{\Omega}_f
=
\overline{\hat{\Sigma}_f^D}
\cup
\overline{\hat{\Sigma}_f^N}
\cup
\overline{\hat{\Gamma}_s},
\]
where $\hat{\Sigma}_f^D$ is the portion on which essential boundary
conditions are imposed, $\hat{\Sigma}_f^N$ is the portion on which
natural boundary conditions are imposed, and $\hat{\Gamma}_s$ is the
reference fluid-structure interface. These boundary portions are
assumed to be relatively open and pairwise disjoint, except possibly
along their common edges.

For each $t\in[0,T]$, the corresponding current fluid domain is
denoted by $\Omega_f(t)$. Its motion is described by the Arbitrary
Lagrangian-Eulerian mapping
\[
\hat{\varphi}_f(\cdot,t):
\hat{\Omega}_f
\longrightarrow
\Omega_f(t),
\qquad
\hat{\varphi}_f(\hat{\boldsymbol x},t)
=
\hat{\boldsymbol x}
+
\hat{\boldsymbol\eta}_f(\hat{\boldsymbol x},t),
\]
where $\hat{\boldsymbol\eta}_f$ is the fluid-mesh movement. We
assume that $\hat{\varphi}_f(\cdot,t)$ is a sufficiently regular,
orientation-preserving bijection for every $t\in[0,T]$. We define the Jacobian determinant and assume the uniform nondegeneracy condition
\[
\hat J_f(\hat{\boldsymbol x},t)
=
\det
\left(
\hat{\nabla}
\hat{\varphi}_f(\hat{\boldsymbol x},t)
\right),\qquad
\operatorname*{ess\,inf}_{
(\hat{\boldsymbol x},t)
\in
\hat{\Omega}_f\times(0,T)}
\hat J_f(\hat{\boldsymbol x},t)
>0.
\]

The corresponding current boundary portions are
\[
\Sigma_f^D(t)
=
\hat{\varphi}_f(\hat{\Sigma}_f^D,t),
\qquad
\Sigma_f^N(t)
=
\hat{\varphi}_f(\hat{\Sigma}_f^N,t),
\qquad
\Gamma(t)
=
\hat{\varphi}_f(\hat{\Gamma}_s,t).
\]
For a scalar or vector valued function $g$ defined on
$\Omega_f(t)$, its pullback to the reference fluid domain is
\[
\hat g(\hat{\boldsymbol x},t)
=
g\left(
\hat{\varphi}_f(\hat{\boldsymbol x},t),t
\right).
\]
The fluid-mesh velocity in the current configuration is
\[
\boldsymbol w_f(\boldsymbol x,t)
=
\partial_t\hat{\varphi}_f
\left(
\hat{\varphi}_f^{-1}(\boldsymbol x,t),t
\right),
\qquad
\boldsymbol x\in\Omega_f(t).
\]

Since we assume that
$R\in W^{1,\infty}(0,L),~
\inf_{x\in[0,L]}R(x)>0$, this
imply that the reference fluid domain $\hat{\Omega}_f$ is a Lipschitz
domain. We further assume that, for every $t\in[0,T]$, the ALE mapping
is a bi-Lipschitz homeomorphism:
\[
\hat{\varphi}_f(\cdot,t)
\in
W^{1,\infty}(\hat{\Omega}_f;\mathbb R^3),
\qquad
\hat{\varphi}_f^{-1}(\cdot,t)
\in
W^{1,\infty}(\Omega_f(t);\mathbb R^3),
\]
with bounds that are uniform in time. Consequently, the current fluid
domain $\Omega_f(t)$ is also Lipschitz, and its outward unit normal
$\boldsymbol n_f$ is defined almost everywhere on
$\partial\Omega_f(t)$.

\begin{figure}[htbp]
    \centering
    \includegraphics[width=0.7\textwidth]{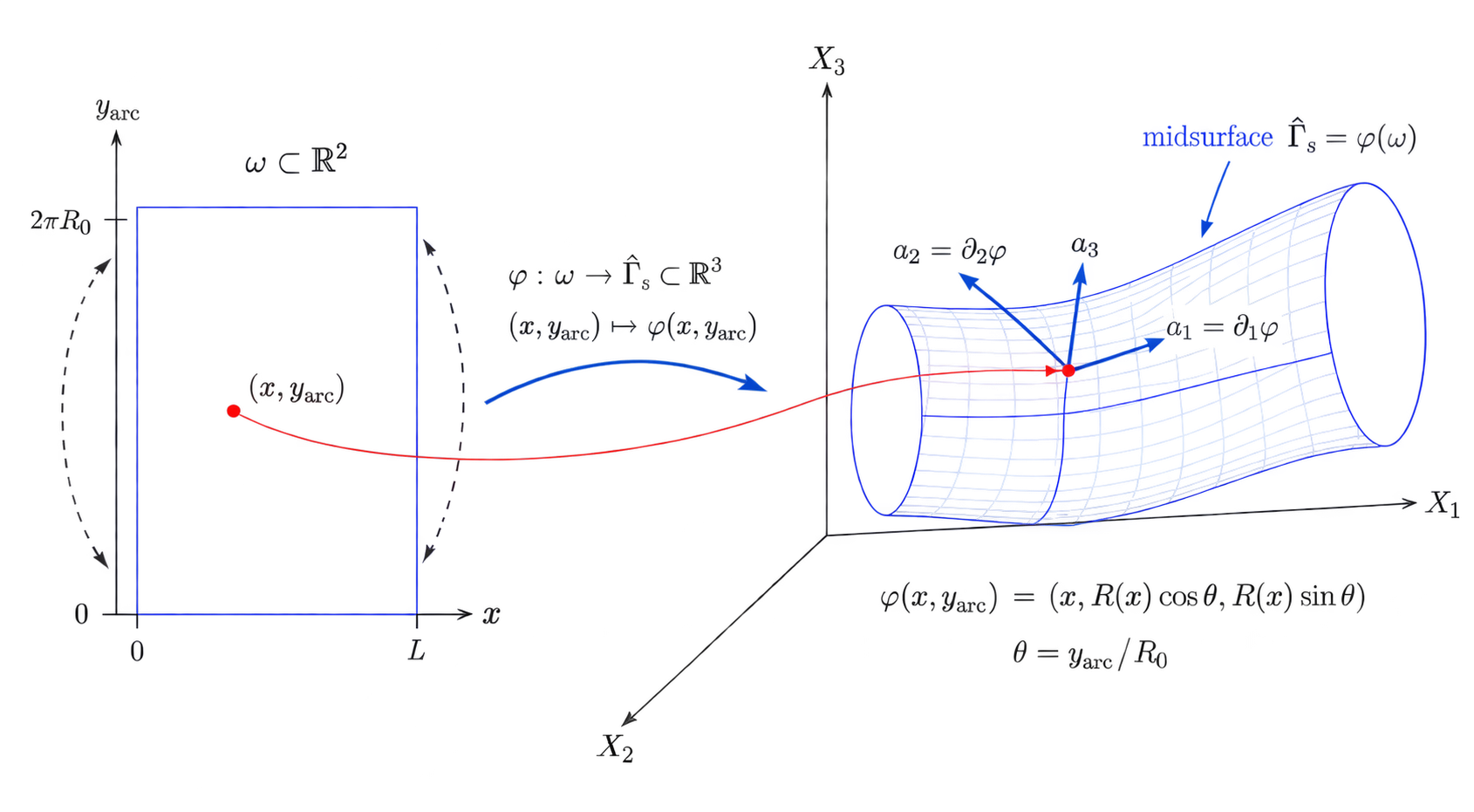}
    \caption{Schematic representation of the Naghdi shell midsurface. The two-dimensional parameter domain, with periodic circumferential coordinate $y_{\mathrm{arc}}$, is mapped onto the midsurface  $\hat{\Gamma}_s={\varphi}(\omg)\subset\mathbb{R}^3$ by ${\varphi}(x,y_{\mathrm{arc}})=\bigl(x,R(x)\cos\theta,R(x)\sin\theta\bigr)$. The vectors $\boldsymbol{a}_1=\partial_1{\varphi}$ and $\boldsymbol{a}_2=\partial_2{\varphi}$ span the tangent plane, while $\boldsymbol{a}_3$ denotes the unit normal to the midsurface.}
    \label{SketchNaghdi}
\end{figure}

\paragraph{Shell domain}
The arterial wall is represented by a two-dimensional (2D) Naghdi-type
shell embedded in $\mathbb R^3$, as illustrated in Figure.~\ref{SketchNaghdi}. The shell is assumed to be cylindrical, and has length $L$, thickness
$\varepsilon$, with the reference radius profile $R(x)$. Its reference
midsurface is parameterized over the fixed domain
\[
\omg =
(0,L)\times(0,2\pi R_0).
\]
The coordinates on $\omg$ are denoted by
$(x,y_{\mathrm{arc}})$, where $x$ is the axial coordinate and
$y_{\mathrm{arc}}$ is the circumferential reference coordinate. We assume the top and bottom
boundaries
$
y_{\mathrm{arc}}=0~
\text{and}~ 
y_{\mathrm{arc}}=2\pi R_0
$
are periodic.

We define an angular variable 
$\theta=\frac{y_{\mathrm{arc}}}{R_0}$, then the unified reference midsurface map is
\[
\varphi:
\omg
\longrightarrow
\mathbb R^3,
\qquad
\varphi(x,y_{\mathrm{arc}})
=
\left(
x,
R(x)\cos\theta,
R(x)\sin\theta
\right).
\]
The reference fluid structure interface is therefore
\[
\hat{\Gamma}_s
=
\varphi(\omg).
\]

Throughout this paper, we write $\partial_1=\partial_x,
~
\partial_2=\partial_{y_{\mathrm{arc}}}$.
Then, the covariant basis vectors associated with the reference midsurface map are
defined almost everywhere on $\omg$ by
\[
\boldsymbol a_1
=
\partial_1\varphi
=
\left(
1,
R'(x)\cos\theta,
R'(x)\sin\theta
\right),
\quad
\boldsymbol a_2
=
\partial_2\varphi
=
\left(
0,
-\frac{R(x)}{R_0}\sin\theta,
\frac{R(x)}{R_0}\cos\theta
\right).
\]
Their cross product is
\begin{equation}
\boldsymbol a_1\times\boldsymbol a_2
=
\frac{R(x)}{R_0}
\left(
R'(x),-\cos\theta,-\sin\theta
 \right)
,
\quad
 \left|\boldsymbol a_1\times\boldsymbol a_2\right|
 =
 \sqrt a
 =
 \frac{R(x)}{R_0}\sqrt{1+R'(x)^2}
.
\label{def:a}
\end{equation}
With this orientation, we can define the outward unit normal to the reference fluid domain
on the lateral interface as $-\boldsymbol a_3$, where
\[
\boldsymbol a_3
=
\frac{\boldsymbol a_1\times\boldsymbol a_2}
{\left|\boldsymbol a_1\times\boldsymbol a_2\right|}
=
\frac{
\left(
R'(x),-\cos\theta,-\sin\theta
\right)
}{
\sqrt{1+R'(x)^2}
}.
\]
The contravariant tangent vectors are determined by
\[
\boldsymbol a^\alpha\cdot\boldsymbol a_\beta
=
\delta^\alpha_\beta,
\qquad
\alpha,\beta\in\{1,2\},
\]
and are given explicitly by
\[
\boldsymbol a^1
=
\frac{
\left(
1,
R'(x)\cos\theta,
R'(x)\sin\theta
\right)
}{
1+R'(x)^2
},
~
\boldsymbol a^2
=
\left(
0,
-\frac{R_0}{R(x)}\sin\theta,
\frac{R_0}{R(x)}\cos\theta
\right),
~
\boldsymbol a^3=\boldsymbol a_3.
\]
Therefore, we can obtain
the corresponding covariant metric tensor and the contravariant metric tensor
\[
\boldsymbol a_c
=
\begin{pmatrix}
1+R'(x)^2 & 0\\[1mm]
0 & \left(\dfrac{R(x)}{R_0}\right)^2
\end{pmatrix},\qquad
\boldsymbol a^c
=
\begin{pmatrix}
\dfrac{1}{1+R'(x)^2} & 0\\[3mm]
0 & \left(\dfrac{R_0}{R(x)}\right)^2
\end{pmatrix}.
\]
The metric determinant is
\begin{equation}
a
=
\det(\boldsymbol a_c)
=
\left(
1+R'(x)^2
\right)
\left(
\frac{R(x)}{R_0}
\right)^2.
\end{equation}

For the banded case, the reference surface measure is
\[
dS_0
=
\sqrt{a}\,dx\,dy_{\mathrm{arc}}
=
\frac{R(x)}{R_0}
\sqrt{1+R'(x)^2}
\,dx\,dy_{\mathrm{arc}}.
\]
While for the unbanded case, we have:
$R(x)=R_0,~
R'(x)=0,$
and thus
$$\boldsymbol a_c=\boldsymbol a^c=\mathbf I_2,
~
a=1,
~
dS_0=dx\,dy_{\mathrm{arc}}.
$$

Let
$\boldsymbol\eta:
\omg\times[0,T]
\longrightarrow
\mathbb R^3$
denote the shell midsurface displacement in Cartesian coordinates.
The current shell midsurface map is
\[
\Phi_s(x,y_{\mathrm{arc}},t)
=
\varphi(x,y_{\mathrm{arc}})
+
\boldsymbol\eta(x,y_{\mathrm{arc}},t),
\]
and the current fluid structure interface is
\[
\Gamma(t)
=
\Phi_s(\omg,t).
\]
We assume that $\Phi_s(\cdot,t)$ is injective and regular, up to the
periodic identification, and define
\[
J_s(x,y_{\mathrm{arc}},t)
=
\left|
\partial_x\Phi_s(x,y_{\mathrm{arc}},t)
\times
\partial_{y_{\mathrm{arc}}}
\Phi_s(x,y_{\mathrm{arc}},t)
\right|.
\]
We also assume the uniform nondegeneracy condition
$$
\operatorname*{ess\,inf}_{
(x,y_{\mathrm{arc}},t)
\in
\omg\times(0,T)}
J_s(x,y_{\mathrm{arc}},t)
>0.
\label{nondegeneracy}
$$
The current surface measure is therefore
\[
dS_t
=
J_s(x,y_{\mathrm{arc}},t)
\,dx\,dy_{\mathrm{arc}}.
\]

\paragraph{Fluid structure interface}

The fluid and shell mappings are required to coincide on the
fluid-structure interface $\Gamma(t)$:
\[
\hat{\varphi}_f
\left(
\varphi(x,y_{\mathrm{arc}}),t
\right)
=
\Phi_s(x,y_{\mathrm{arc}},t).
\]
Equivalently,
\[
\hat{\boldsymbol\eta}_f
\left(
\varphi(x,y_{\mathrm{arc}}),t
\right)
=
\boldsymbol\eta(x,y_{\mathrm{arc}},t).
\]
For the interior points of the fluid domain, the fluid mesh displacement is
obtained using the adaptive ALE extension proposed by Masud and
Hughes~\cite{masud1997space}. In particular, the prescribed shell
displacement is extended componentwise into the fluid domain through the
weighted harmonic problem
\begin{equation}
\begin{cases}
-\hat{\nabla}\cdot
\left(
\left[1+\tau_m(\hat{\boldsymbol x},t)\right]
\hat{\nabla}\hat{\boldsymbol\eta}_f
\right)
=
\boldsymbol 0
& \text{in }\hat{\Omega}_f,\\[2mm]
\hat{\boldsymbol\eta}_f
=
\boldsymbol\eta\circ\varphi^{-1}
& \text{on }\hat{\Gamma}_s,\\[2mm]
\hat{\boldsymbol\eta}_f
=
\boldsymbol 0
& \text{on }
\partial\hat{\Omega}_f
\setminus
\overline{\hat{\Gamma}_s}.
\end{cases}
\label{adaptive_ALE_ext}
\end{equation}
The weighting parameter $\tau_m$ is piecewise constant on each reference
fluid element $\hat K_e$ and is defined by
\begin{equation}
\left.\tau_m(\hat{\boldsymbol x},t)\right|_{\hat K_e}
=
\frac{
1-\Delta_{\min}(t)/\Delta_{\max}(t)
}{
\Delta^e(t)/\Delta_{\max}(t)
},
\qquad e=1,\ldots,n_{\mathrm{el}},
\label{adaptive_ALE_weight}
\end{equation}
where
\[
K_e(t)=\hat{\varphi}_f(\hat K_e,t),
\qquad
\Delta^e(t)=\lvert K_e(t)\rvert,
\]
and
\[
\Delta_{\min}(t)
=
\min_{1\leq e\leq n_{\mathrm{el}}}\Delta^e(t),
\qquad
\Delta_{\max}(t)
=
\max_{1\leq e\leq n_{\mathrm{el}}}\Delta^e(t).
\]
Thus, in the present 3D setting, $\Delta^e(t)$ denotes the
volume of the corresponding tetrahedral element in the current fluid mesh.
The quantities $\Delta_{\min}(t)$, $\Delta_{\max}(t)$, and $\tau_m$ are
recomputed before each ALE extension solve. Smaller elements are assigned with a larger stiffness, which prevents them to under
excessive deformation.

\subsection{Fluid equations in the ALE formulation}

The fluid is modeled as an incompressible Newtonian fluid occupying the
time-dependent domain $\Omega_f(t)$. For each $t\in(0,T)$, let
\[
\boldsymbol u(\cdot,t):\Omega_f(t)\rightarrow\mathbb R^3,
\qquad
p(\cdot,t):\Omega_f(t)\rightarrow\mathbb R
\]
denote the fluid velocity and pressure, respectively. The Cauchy stress
tensor is
\[
\boldsymbol\sigma_f(\boldsymbol u,p)
=
-p\mathbf I+2\mu_f\mathbb D(\boldsymbol u),
\qquad
\mathbb D(\boldsymbol u)
=
\frac{1}{2}
\left(
\nabla\boldsymbol u+
\left(\nabla\boldsymbol u\right)^T
\right),
\]
where $\mu_f>0$ is the fluid dynamic viscosity. All spatial derivatives
in this subsection are taken with respect to the current spatial
coordinate $\boldsymbol x\in\Omega_f(t)$.

Let
$\hat{\boldsymbol u}(\hat{\boldsymbol x},t)
=
\boldsymbol u
\left(
\hat{\varphi}_f(\hat{\boldsymbol x},t),t
\right)$
denote the pullback of the fluid velocity to the reference domain
$\hat{\Omega}_f$. Differentiating at fixed reference coordinate
$\hat{\boldsymbol x}$ gives
\[
\partial_t\hat{\boldsymbol u}(\hat{\boldsymbol x},t)
=
\left[
\partial_t\boldsymbol u
+
(\boldsymbol w_f\cdot\nabla)\boldsymbol u
\right]
\left(
\hat{\varphi}_f(\hat{\boldsymbol x},t),t
\right).
\]
Then the ALE time derivative in current coordinates is defined
by
\[
\left.
\partial_t\boldsymbol u
\right|_{\hat{\Omega}_f}
(\boldsymbol x,t)
:=
\partial_t\hat{\boldsymbol u}
\left(
\hat{\varphi}_f^{-1}(\boldsymbol x,t),t
\right)
=
\partial_t\boldsymbol u(\boldsymbol x,t)
+
\left(
\boldsymbol w_f(\boldsymbol x,t)\cdot\nabla
\right)
\boldsymbol u(\boldsymbol x,t),
\]
and therefore the incompressible Navier-Stokes equations in ALE form are
\begin{equation}
\left\{
\begin{aligned}
\rho_f
\left(
\left.
\partial_t\boldsymbol u
\right|_{\hat{\Omega}_f}
+
\left(
(\boldsymbol u-\boldsymbol w_f)\cdot\nabla
\right)
\boldsymbol u
\right)
&=
\nabla\cdot\boldsymbol\sigma_f(\boldsymbol u,p)
+
\boldsymbol F_f
&&
\text{in }\Omega_f(t),
\\
\nabla\cdot\boldsymbol u
&=
0
&&
\text{in }\Omega_f(t),
\end{aligned}
\right.
\qquad t\in(0,T).
\label{Navier-Stokes}
\end{equation}
Here $\rho_f>0$ is the constant fluid density and
$\boldsymbol F_f$ denotes the external body force.

\paragraph {Boundary conditions} 
The current non-interface boundary portions can be obtained from their
reference counterparts through the ALE mapping:
\[
\Sigma_f^D(t)
=
\hat{\varphi}_f(\hat{\Sigma}_f^D,t),
\qquad
\Sigma_f^N(t)
=
\hat{\varphi}_f(\hat{\Sigma}_f^N,t).
\]
Together with the fluid-structure interface, they satisfy
\[
\partial\Omega_f(t)
=
\overline{\Sigma_f^D(t)}
\cup
\overline{\Sigma_f^N(t)}
\cup
\overline{\Gamma(t)},
\]
where their relative interiors are pairwise disjoint. The interface
compatibility imposed in Section~\ref{shelldomain} gives
\[
\Gamma(t)
=
\hat{\varphi}_f(\hat{\Gamma}_s,t)
=
\Phi_s(\omg,t).
\]

\begin{figure}[htbp]
\centering
\includegraphics[width=0.75\textwidth]{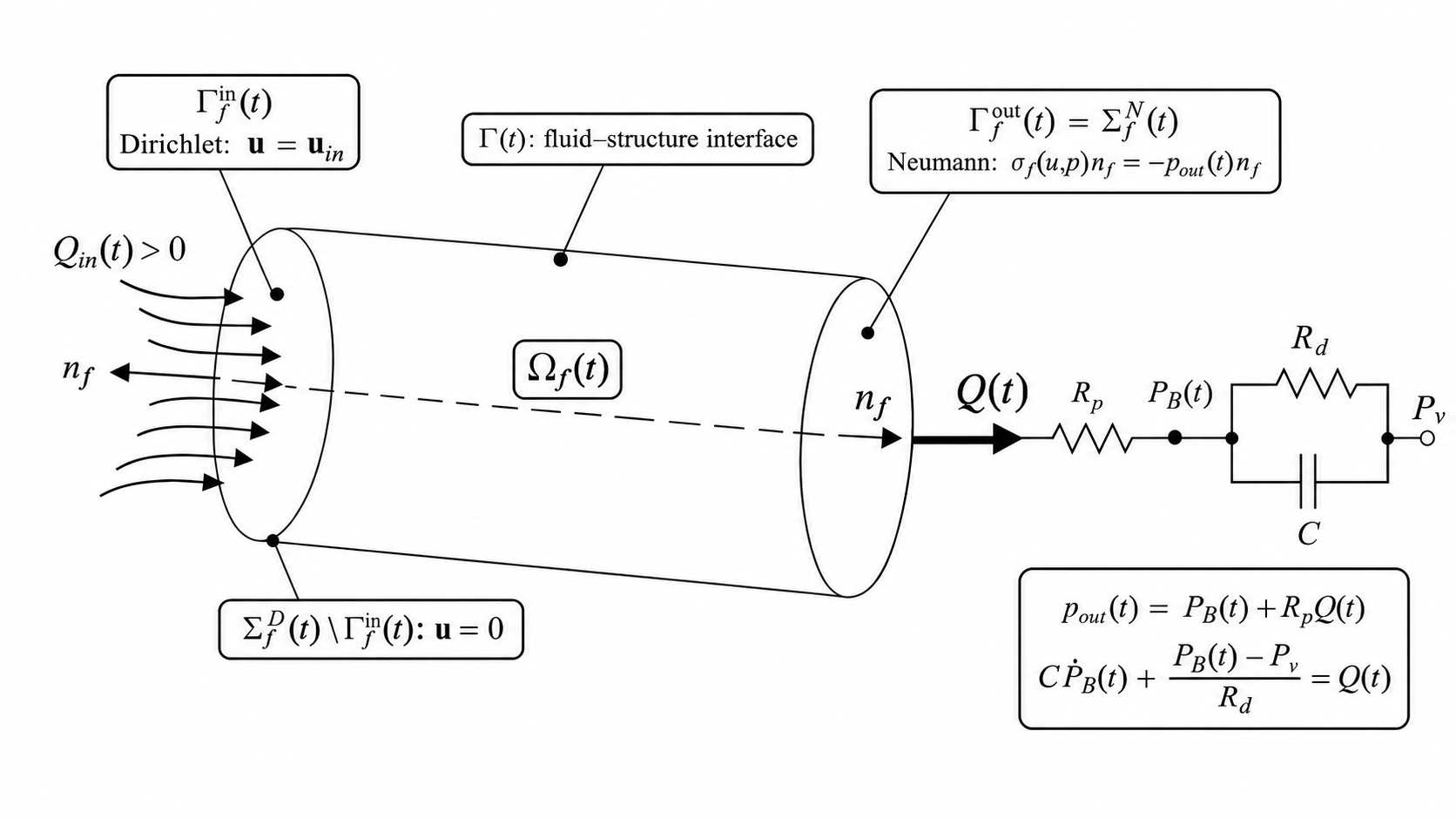}
    \caption{Schematic of the fluid domain, boundary conditions, and three-element Windkessel outlet model. }
    \label{FluidBC}
\end{figure}

As illustrated in Figure.~\ref{FluidBC}, for the specific medical problem we considered in this work, a Dirichlet boundary condition is imposed at the inlet, while a Neumann boundary condition is prescribed at the outlet:
\[
\Gamma_f^{\mathrm{in}}(t)
\subset
\Sigma_f^D(t),
\qquad
\Gamma_f^{\mathrm{out}}(t)
=
\Sigma_f^N(t).
\]
And the non-interface boundary conditions are
\begin{equation}
\left\{
\begin{aligned}
\boldsymbol u
&=
\boldsymbol u_{\mathrm{in}}
&&
\text{on }\Gamma_f^{\mathrm{in}}(t),
\\
\boldsymbol\sigma_f(\boldsymbol u,p)\boldsymbol n_f
&=
-p_{\mathrm{out}}(t)\boldsymbol n_f
&&
\text{on }\Gamma_f^{\mathrm{out}}(t),
\\
\boldsymbol u
&=
\boldsymbol 0
&&
\text{on }
\Sigma_f^D(t)
\setminus
\overline{\Gamma_f^{\mathrm{in}}(t)},
\end{aligned}
\right.
\qquad t\in(0,T).
\label{NS_BC}
\end{equation}
Here $\boldsymbol n_f$ denotes the outward unit normal to the current
fluid domain.

At the inlet $\Gamma_f^{\mathrm{in}}(t)$, we let $\boldsymbol u_{\mathrm{in}}$ be a prescribed inlet velocity profile
whose flux equals the prescribed inlet-flow waveform. Because
$\boldsymbol n_f$ points outward from the fluid domain, the positive
inflow rate is defined by
\[
Q_{\mathrm{in}}(t)
=
-\int_{\Gamma_f^{\mathrm{in}}(t)}
\boldsymbol u_{\mathrm{in}}(\boldsymbol x,t)
\cdot
\boldsymbol n_f(\boldsymbol x,t)
\,dS
>0.
\]
In particular, both the spatial distribution and the time dependence of
$\boldsymbol u_{\mathrm{in}}$ are prescribed so that its flux equals
$Q_{\mathrm{in}}(t)$. 

At the outlet boundary, we employ a three-element Windkessel model, in which the outlet pressure $p_{\mathrm{out}}(t)$ is determined by the outlet flow rate $Q(t)$ rather than prescribed independently.
The three-element Windkessel model is given by the following:
\[
C\dot P_B(t)
+
\frac{P_B(t)-P_v}{R_d}
=
Q(t),
\]
where $C>0$ is compliance,  $R_d>0$ is distal resistance, and $P_v$ is downstream venous pressure. They are all prescribed. The right hand-side, which is the outflow rate, is calculated from 
\[
Q(t)
=
\int_{\Gamma_f^{\mathrm{out}}(t)}
\boldsymbol u(\boldsymbol x,t)
\cdot
\boldsymbol n_f(\boldsymbol x,t)
\,dS.
\]
Then, the spatially uniform outlet pressure is determined by
\[
p_{\mathrm{out}}(t)
=
P_B(t)+R_pQ(t),
\]
where $R_p$ is the proximal resistance, which is prescribed.

As we shall see in Section~\ref{sec:ukf_parameter_estimation}, the Windkessel parameters $C,R_p$ and $R_d$ will be estimated to match the inlet pressure experimental measurements using an unscented Kalman filter. 

The kinematic and dynamic coupling conditions on $\Gamma(t)$ are
specified in the fluid-structure coupling section \ref{interface_conditions}.

\paragraph{Initial condition}
We assume the initial fluid velocity satisfies $\boldsymbol u(\boldsymbol x,0)
=
\boldsymbol 0,~
\text{in }\Omega_f(0)$.

\subsection{Naghdi shell model}
\label{NaghdiModel}

The elastic wall is modeled by the linear Naghdi-type shell model for
midsurfaces with limited regularity~\cite{TambacaTutek2016}. In particular, this low-regularity Naghdi formulation depends on the reference
midsurface map $\varphi$ only through its first weak derivatives.
It is therefore well defined almost everywhere for the globally
$W^{1,\infty}$, piecewise-$C^1$ regular reference parametrization
introduced in Section~\ref{shelldomain}. 
Classical curvature tensor requires second derivatives of the midsurface map, while all geometric coefficients in the current formulation are constructed from the first weak derivatives of $\varphi$ and are interpreted almost everywhere.

The Naghdi shell unknowns are
$
(\boldsymbol\eta,\omb)$,
where $\boldsymbol\eta:\omg\times(0,T)\rightarrow\mathbb R^3$ is the
midsurface displacement and
$\omb:\omg\times(0,T)\rightarrow\mathbb R^3$ is the
infinitesimal rotation of the shell cross-section. Both fields are expressed
in Cartesian coordinates. 

\paragraph{Notation:} We would like to clarify that throughout this paper, $\omg$ denotes the reference domain of the Naghdi shell, whereas $\omb$ denotes its rotation vector.

\paragraph{Membrane and transverse shear strain}
Define the Cartesian membrane-shear geometric quantity
\[
\mathbf E_{\mathrm{geo}}(\boldsymbol\eta,\omb)
=
\begin{bmatrix}
\partial_1\boldsymbol\eta+\boldsymbol a_1\times\omb
&
\partial_2\boldsymbol\eta+\boldsymbol a_2\times\omb
\end{bmatrix}
\in\mathbb R^{3\times2}.
\]
We define the transform matrix $\mathbf Q$ to convert Cartesian coordinates into components relative to the local surface basis $(\boldsymbol a_1,\boldsymbol a_2,\boldsymbol a_3)$:
\[
\mathbf Q
=
\begin{pmatrix}
\boldsymbol a^1\\
\boldsymbol a^2\\
\boldsymbol a^3
\end{pmatrix}.
\]
For vector $\boldsymbol r\in\mathbb R^3$ in Cartesian coordinates, we have the coordinate respresentation of $\boldsymbol{r}$ in the local surface basis
\[
\mathbf Q\boldsymbol r
=
\begin{pmatrix}
\boldsymbol a^1\cdot\boldsymbol r\\
\boldsymbol a^2\cdot\boldsymbol r\\
\boldsymbol a^3\cdot\boldsymbol r
\end{pmatrix}.
\]
Therefore, to express each column of $\mathbf E_{\mathrm{geo}}(\boldsymbol\eta,\omb)$ in the local surface frame, we multiply
from the left by $\mathbf Q$. Since the
first two rows of $\mathbf Q$ are the dual tangent vectors
$\boldsymbol a^1$ and $\boldsymbol a^2$, while the third row is the
unit normal $\boldsymbol a_3$, the first two rows of the resulting
matrix contain the tangential components and the third row contains
the normal components. Accordingly, we define
\[
\widehat{\mathbf E}_{\mathrm{mem}}
(\boldsymbol\eta,\omb)
=
\mathbf Q
\mathbf E_{\mathrm{geo}}(\boldsymbol\eta,\omb)
=
\begin{pmatrix}
\mathbf A(\boldsymbol\eta,\omb)\\
\boldsymbol e(\boldsymbol\eta,\omb)^T
\end{pmatrix},
\]
where
$\mathbf A(\boldsymbol\eta,\omb)\in\mathbb R^{2\times2}$
collects the tangential membrane-rotation components, and
$\boldsymbol e(\boldsymbol\eta,\omb)\in\mathbb R^2$
collects the normal, or transverse-shear, components. Since
the metric is diagonal matrix, the tangential block can be written as
\[
\mathbf A(\boldsymbol\eta,\omb)
=
\begin{pmatrix}
\boldsymbol a^1\cdot\partial_1\boldsymbol\eta
&
\boldsymbol a^1\cdot\partial_2\boldsymbol\eta
+\sqrt a\,a^{11}\omb\cdot\boldsymbol a_3
\\
\boldsymbol a^2\cdot\partial_1\boldsymbol\eta
-\sqrt a\,a^{22}\omb\cdot\boldsymbol a_3
&
\boldsymbol a^2\cdot\partial_2\boldsymbol\eta
\end{pmatrix},
\]
where $a^{11}$ and $a^{22}$ are the diagonal entries of
$\boldsymbol a^c$. The opposite signs in the off-diagonal entries come from $\boldsymbol a^2 \times \boldsymbol a^1 = - \boldsymbol a^1 \times \boldsymbol a^2$.


{\color{black}The matrix $\mathbf A(\boldsymbol\eta,\omb)$ is a
generalized, generally nonsymmetric tangential strain.
Multiplying it by the covariant metric tensor $\boldsymbol{a}_c$ we obtain, see Lemma 3.2 in \cite{TambacaTutek2016},
\[
\begin{aligned}
\boldsymbol a_c
\mathbf A(\boldsymbol\eta,\omb)
&=
\begin{pmatrix}
\boldsymbol a_1\cdot\partial_1\boldsymbol\eta
&
\dfrac12
\left(
\boldsymbol a_1\cdot\partial_2\boldsymbol\eta
+
\boldsymbol a_2\cdot\partial_1\boldsymbol\eta
\right)
\\
\dfrac12
\left(
\boldsymbol a_1\cdot\partial_2\boldsymbol\eta
+
\boldsymbol a_2\cdot\partial_1\boldsymbol\eta
\right)
&
\boldsymbol a_2\cdot\partial_2\boldsymbol\eta
\end{pmatrix}\\
&\quad + \sqrt{a} \left(\omb\cdot\boldsymbol{a}_3 - \frac{1}{2\sqrt{a}}(\boldsymbol{a}_2\cdot\partial_1\boldsymbol{\eta}-\boldsymbol{a}_1\cdot\partial_2\boldsymbol{\eta})\right)\begin{pmatrix}
    0 & -1 \\ 1 & 0
\end{pmatrix}.
\end{aligned}
\]
The first term in the right hand side is symmetric and recovers the classical covariant membrane strain. It depends only on the displacement. The second term in the right hand side is skew-symmetric and measures the drilling shear. The present low-regularity Naghdi model,
however, uses the full matrix
$\mathbf A(\boldsymbol\eta,\omb)$ and therefore retains also the drilling shear.
}

The transverse shear row is
\[
\boldsymbol e(\boldsymbol\eta,\omb)^T
=
\begin{pmatrix}
\boldsymbol a_3\cdot\partial_1\boldsymbol\eta
+
\sqrt a\,\omb\cdot\boldsymbol a^2
&
\boldsymbol a_3\cdot\partial_2\boldsymbol\eta
-
\sqrt a\,\omb\cdot\boldsymbol a^1
\end{pmatrix}.
\]

For a test pair $(\boldsymbol v,\boldsymbol w)$, define analogously
\[
\widehat{\mathbf D}_{\mathrm{mem}}(\boldsymbol v,\boldsymbol w)
=
\mathbf Q
\begin{bmatrix}
\partial_1\boldsymbol v+\boldsymbol a_1\times\boldsymbol w
&
\partial_2\boldsymbol v+\boldsymbol a_2\times\boldsymbol w
\end{bmatrix}
=
\begin{pmatrix}
\mathbf D(\boldsymbol v,\boldsymbol w)\\
\boldsymbol d(\boldsymbol v,\boldsymbol w)^T
\end{pmatrix},
\]
where $\mathbf D(\boldsymbol v,\boldsymbol w)\in\mathbb R^{2\times2}$
is given by
\[
\mathbf D(\boldsymbol v,\boldsymbol w)
=
\begin{pmatrix}
\boldsymbol a^1\cdot\partial_1\boldsymbol v
&
\boldsymbol a^1\cdot\partial_2\boldsymbol v
+\sqrt a\,a^{11}\boldsymbol w\cdot\boldsymbol a_3
\\
\boldsymbol a^2\cdot\partial_1\boldsymbol v
-\sqrt a\,a^{22}\boldsymbol w\cdot\boldsymbol a_3
&
\boldsymbol a^2\cdot\partial_2\boldsymbol v
\end{pmatrix},
\]
and
\[
\boldsymbol d(\boldsymbol v,\boldsymbol w)^T
=
\begin{pmatrix}
\boldsymbol a_3\cdot\partial_1\boldsymbol v
+
\sqrt a\,\boldsymbol w\cdot\boldsymbol a^2
&
\boldsymbol a_3\cdot\partial_2\boldsymbol v
-
\sqrt a\,\boldsymbol w\cdot\boldsymbol a^1
\end{pmatrix}.
\]

\paragraph{Bending strain}
The bending strain measures the variation of the infinitesimal rotation field along
the surface directions. In Cartesian coordinates, we define the bending quantity
\[
\mathbf E_{\mathrm{bend}}(\omb)
=
\begin{bmatrix}
\partial_1\omb
&
\partial_2\omb
\end{bmatrix}
\in\mathbb R^{3\times2},
\]
then the corresponding local bending quantity is
\[
\widehat{\mathbf E}_{\mathrm{bend}}(\omb)
=
\mathbf Q
\mathbf E_{\mathrm{bend}}(\omb)
= \begin{pmatrix}
\boldsymbol{a}_1 \cdot \partial_1\omb & \boldsymbol{a}_1 \cdot \partial_2\omb \\
\boldsymbol{a}_2 \cdot \partial_1\omb & \boldsymbol{a}_2 \cdot \partial_2\omb \\
\boldsymbol{a}_3 \cdot \partial_1\omb & \boldsymbol{a}_3 \cdot \partial_2\omb
\end{pmatrix}
=
\begin{pmatrix}
\mathbf E_b(\omb)\\
\boldsymbol e_b(\omb)^T
\end{pmatrix},
\]
where $\mathbf E_b(\omb)\in\mathbb R^{2\times2}$ and
$\boldsymbol e_b(\omb)\in\mathbb R^2$. For a test rotation
$\boldsymbol w$, we set
\[
\widehat{\mathbf D}_{\mathrm{bend}}(\boldsymbol w)
=
\mathbf Q
\begin{bmatrix}
\partial_1\boldsymbol w
&
\partial_2\boldsymbol w
\end{bmatrix}
=
\begin{pmatrix}
\mathbf D_b(\boldsymbol w)\\
\boldsymbol d_b(\boldsymbol w)^T
\end{pmatrix}.
\]

\paragraph{Elasticity tensors}
Let $\lambda$ and $\mu$ be the Lam\'e coefficients and assume $\mu>0,~
3\lambda+2\mu>0$.
The membrane-shear elasticity operator is defined through
\begin{equation}
    \begin{aligned}
\mathcal C_m
\widehat{\mathbf E}_{\mathrm{mem}}
(\boldsymbol\eta,\omb)
\cdot
\widehat{\mathbf D}_{\mathrm{mem}}
(\boldsymbol v,\boldsymbol w)
&=
\frac{2\lambda\mu}{\lambda+2\mu}
\operatorname{tr}
\left(
\mathbf A(\boldsymbol\eta,\omb)
\right)
\operatorname{tr}
\left(
\mathbf D(\boldsymbol v,\boldsymbol w)
\right)
\\
&~~
+2\mu
\left(
\boldsymbol a_c
\mathbf A(\boldsymbol\eta,\omb)
\boldsymbol a^c
\right)
:
\mathbf D(\boldsymbol v,\boldsymbol w)
+
\mathcal B_m
\boldsymbol e(\boldsymbol\eta,\omb)
\cdot
\boldsymbol d(\boldsymbol v,\boldsymbol w),
\end{aligned}
\label{membrane_tensor}
\end{equation}
where, for the isotropic low-regularity model,
$\mathcal B_m
=
\mu\boldsymbol a^c$.

For the bending term, let
\[
\mathbf J
=
\begin{pmatrix}
0&-1\\
1&0
\end{pmatrix}.
\]
For $\mathbf S\in\mathbb R^{2\times2}$, define
\[
\mathcal A\mathbf S
=
\frac{2\lambda\mu}{\lambda+2\mu}
\left(
\boldsymbol a^c:\mathbf S
\right)
\boldsymbol a^c
+
2\mu
\boldsymbol a^c
\mathbf S
\boldsymbol a^c.
\]
Then
\[
\begin{aligned}
\mathcal C_b
\widehat{\mathbf E}_{\mathrm{bend}}(\omb)
\cdot
\widehat{\mathbf D}_{\mathrm{bend}}(\boldsymbol w)
=
a\,
\mathcal A
\left(
\mathbf J\mathbf E_b(\omb)
\right)
:
\left(
\mathbf J\mathbf D_b(\boldsymbol w)
\right)
+
a\,
\mathcal B_b
\boldsymbol e_b(\omb)
\cdot
\boldsymbol d_b(\boldsymbol w).
\end{aligned}
\]
Here, the matrix $\mathcal B_b$ is assumed to be symmetric and uniformly positive
definite. The sign convention used in the definition of $\mathbf J$ does not
affect this bilinear form, because $\mathbf J$ occurs in both factors.

\paragraph{Weak shell equation}
Let the clamped part of the shell boundary be
\[
\gamma_0
=
\left(
\{0\}\times(0,2\pi R_0)
\right)
\cup
\left(
\{L\}\times(0,2\pi R_0)
\right).
\]
Define
\[
\begin{aligned}
V_N
=
\big\{
&(\boldsymbol\eta,\omb)
\in H^1(\omg;\mathbb R^3)^2:
\boldsymbol\eta|_{\gamma_0}=\boldsymbol0,
\quad
\omb|_{\gamma_0}=\boldsymbol0,
\\
&\boldsymbol\eta(x,0)
=
\boldsymbol\eta(x,2\pi R_0),
~
\omb(x,0)
=
\omb(x,2\pi R_0)
\quad\text{for a.e. }x\in(0,L)
\big\}.
\end{aligned}
\]
The membrane-shear and bending bilinear forms are
\[
\begin{aligned}
&B_m
\big(
(\boldsymbol\eta,\omb),
(\boldsymbol v,\boldsymbol w)
\big)
=
\varepsilon
\int_\omg
\mathcal C_m
\widehat{\mathbf E}_{\mathrm{mem}}
(\boldsymbol\eta,\omb)
\cdot
\widehat{\mathbf D}_{\mathrm{mem}}
(\boldsymbol v,\boldsymbol w)
\sqrt a
\,dx\,dy_{\mathrm{arc}},
\end{aligned}
\]
and
\[
\begin{aligned}
&B_b
\big(
(\boldsymbol\eta,\omb),
(\boldsymbol v,\boldsymbol w)
\big)
=
\frac{\varepsilon^3}{12}
\int_\omg
\mathcal C_b
\widehat{\mathbf E}_{\mathrm{bend}}(\omb)
\cdot
\widehat{\mathbf D}_{\mathrm{bend}}(\boldsymbol w)
\sqrt a
\,dx\,dy_{\mathrm{arc}}.
\end{aligned}
\]

The weak shell problem is to find
$(\boldsymbol\eta,\omb)\in V_N$ such that, for every
$(\boldsymbol v,\boldsymbol w)\in V_N$,
\begin{equation}
\begin{aligned}
&\varepsilon\rho_s
\int_\omg
\partial_{tt}\boldsymbol\eta
\cdot
\boldsymbol v
\sqrt a
\,dx\,dy_{\mathrm{arc}}
+
\frac{\varepsilon^3\rho_s}{12}
\int_\omg
\partial_{tt}\omb
\cdot
\boldsymbol w
\sqrt a
\,dx\,dy_{\mathrm{arc}}
\\
&
+B_m
\big(
(\boldsymbol\eta,\omb),
(\boldsymbol v,\boldsymbol w)
\big)
+
B_b
\big(
(\boldsymbol\eta,\omb),
(\boldsymbol v,\boldsymbol w)
\big)
=
\int_\omg
\boldsymbol f^\varepsilon
\cdot
\boldsymbol v
\sqrt a
\,dx\,dy_{\mathrm{arc}}.
\end{aligned}
\label{Naghdi_shell}
\end{equation}
Here $\rho_s$ is the shell density and $\boldsymbol f^\varepsilon$ is an
external force per unit physical reference surface area. No distributed
external couple is included.

\paragraph{Boundary conditions}
The matrices $\mathcal{B}_m$ and $\mathcal{B}_b$ are assumed to be symmetric positive definite. The shell is clamped at $x = 0$ and $x = L$:
\[
\boldsymbol{\eta}(0, y_{\mathrm{arc}}, t) = \boldsymbol{\eta}(L, y_{\mathrm{arc}}, t) = \mathbf{0}, \qquad \omb(0, y_{\mathrm{arc}}, t) = \omb(L, y_{\mathrm{arc}}, t) = \mathbf{0}.
\]
Periodic boundary conditions are imposed in the circumferential direction:
\[
\boldsymbol{\eta}(x, 0, t) = \boldsymbol{\eta}(x, 2\pi R, t), \qquad \omb(x, 0, t) = \omb(x, 2\pi R, t).
\]

\paragraph{Initial conditions}
We assume the following compatible initial conditions
\[
\boldsymbol\eta(\cdot,\cdot,0)=\boldsymbol\eta^0,
\qquad
\partial_t\boldsymbol\eta(\cdot,\cdot,0)=\dot{\boldsymbol\eta}^0,
\qquad
\omb(\cdot,\cdot,0)=\omb^0,
\qquad
\partial_t\omb(\cdot,\cdot,0)=\dot{\omb}^0.
\]

\begin{remark}
Because the coefficients of the shell model depend only on first derivatives
of $\varphi$, they are defined almost everywhere and remain bounded under the
assumptions of Section~\ref{shelldomain}. No separate boundary condition is
imposed at an internal band-transition curve. The global
$H^1(\omg;\mathbb R^3)^2$ regularity of
$(\boldsymbol\eta,\omb)$ enforces continuity across each such
curve in the trace sense and therefore represents a connected, welded shell.
\end{remark}

\subsection{Fluid-structure interface coupling conditions}
\label{interface_conditions}

Although the shell displacement belongs to an $H^1$ space, not every
such displacement produces a geometrically admissible current interface:
the resulting surface could self-intersect, fold onto itself, or collapse
locally. We therefore restrict the coupled problem to displacements for
which the current shell map
\[
\Phi_s(\cdot,t):\omg\rightarrow\Gamma(t)
\]
has a bi-Lipschitz representative onto its image, after identifying the
periodic boundaries $y_{\mathrm{arc}}=0$ and
$y_{\mathrm{arc}}=2\pi R_0$. The bi-Lipschitz property prevents distinct
shell points from being mapped to the same spatial point and guarantees
that the inverse surface map $\Phi_s(\cdot,t)^{-1}$ is well defined.

We further assume that $\Phi_s$ preserves the orientation inherited from
the reference map $\varphi$ and that its surface Jacobian
$
J_s(x,y_{\mathrm{arc}},t)$
is bounded away from zero almost everywhere \eqref{nondegeneracy}.
The orientation preservation ensures that the interface normal has a
consistent direction relative to the reference configuration. The lower
bound on $J_s$ prevents the tangent vectors
$\partial_1\Phi_s$ and $\partial_2\Phi_s$ from becoming linearly
dependent and therefore prevents the surface from collapsing locally.
Together, these assumptions ensure that the current interface, its unit
normal, the inverse surface map, and the surface change-of-variables
formula used below are well defined.

\paragraph{Geometric coupling}
Given the reference fluid-structure interface is
$\hat\Gamma_s=\varphi(\omg)$. The geometric condition states that the fluid-mesh boundary and shell midsurface occupy the same current location
\[
\hat\varphi_f
\left(
\varphi(x,y_{\mathrm{arc}}),t
\right)
=
\Phi_s(x,y_{\mathrm{arc}},t),
\qquad
(x,y_{\mathrm{arc}})\in\omg,
\quad t\in(0,T).
\]
Since
$\hat\varphi_f(\hat{\boldsymbol x},t)
=\hat{\boldsymbol x}+\hat{\boldsymbol\eta}_f(\hat{\boldsymbol x},t)$
and
$\Phi_s=\varphi+\boldsymbol\eta$, this condition is equivalent to
\[
\hat{\boldsymbol\eta}_f
\left(
\varphi(x,y_{\mathrm{arc}}),t
\right)
=
\boldsymbol\eta(x,y_{\mathrm{arc}},t).
\]
Since the reference map $\varphi$ is independent of time,
differentiating the geometric coupling condition at fixed
$(x,y_{\mathrm{arc}})$ gives
\[
\partial_t\hat\varphi_f
\left(
\varphi(x,y_{\mathrm{arc}}),t
\right)
=
\partial_t\Phi_s(x,y_{\mathrm{arc}},t)
=
\partial_t\boldsymbol\eta(x,y_{\mathrm{arc}},t).
\]
Using the definition of the current fluid-mesh velocity, this is
equivalent to
\[
\boldsymbol w_f
\left(
\Phi_s(x,y_{\mathrm{arc}},t),t
\right)
=
\partial_t\boldsymbol\eta(x,y_{\mathrm{arc}},t).
\]

\paragraph{Kinematic coupling}
The no-slip
condition requires the fluid velocity to equal the velocity of the
current shell midsurface
\[
\begin{aligned}
&\boldsymbol u
\left(
\Phi_s(x,y_{\mathrm{arc}},t),t
\right)
=
\partial_t\boldsymbol\eta(x,y_{\mathrm{arc}},t), \qquad
(x,y_{\mathrm{arc}})\in\omg,
\quad t\in(0,T).
\end{aligned}
\]
Equivalently,
\[
\boldsymbol u(\boldsymbol x,t)
=
\partial_t\boldsymbol\eta
\left(
\Phi_s(\cdot,t)^{-1}(\boldsymbol x),t
\right),
\qquad
\boldsymbol x\in\Gamma(t),
\quad t\in(0,T).
\]
Notice that, although the full Naghdi shell unknowns are
$(\boldsymbol\eta,\omb)$, there is no
independent fluid kinematic condition for the rotation field
$\omb$.

\paragraph{Dynamic coupling}

The orientation chosen for $\boldsymbol a_3$ in
Section~\ref{shelldomain} and retained in the Naghdi shell formulation gives
the current outward unit normal to the fluid domain as
\[
\boldsymbol n_f
\left(
\Phi_s(x,y_{\mathrm{arc}},t),t
\right)
=
-
\frac{
\partial_1\Phi_s(x,y_{\mathrm{arc}},t)
\times
\partial_2\Phi_s(x,y_{\mathrm{arc}},t)
}{
J_s(x,y_{\mathrm{arc}},t)
}.
\]
In particular, in the reference configuration,
\[
J_s\big|_{\boldsymbol\eta=\boldsymbol0}
=
\sqrt a,
\qquad
\boldsymbol n_f^0
=
-\boldsymbol a_3.
\]
Let $\boldsymbol t_f$ denote the traction vector appearing in the fluid
momentum balance on the interface:
\[
\boldsymbol t_f(\boldsymbol x,t)
=
\boldsymbol\sigma_f
\left(
\boldsymbol u(\boldsymbol x,t),p(\boldsymbol x,t)
\right)
\boldsymbol n_f(\boldsymbol x,t),
\qquad
\boldsymbol x\in\Gamma(t).
\]
Then, the force per unit current surface area
exerted by the fluid on the shell is
\[
\boldsymbol f_s^\Gamma(\boldsymbol x,t)
=
-\boldsymbol t_f(\boldsymbol x,t)
=
-\boldsymbol\sigma_f
\left(
\boldsymbol u(\boldsymbol x,t),p(\boldsymbol x,t)
\right)
\boldsymbol n_f(\boldsymbol x,t),
\qquad
\boldsymbol x\in\Gamma(t).
\]
Let $\boldsymbol v^\Gamma$ denote the test function for the shell displacement on the current
interface
\[
\boldsymbol v^\Gamma
\left(
\Phi_s(x,y_{\mathrm{arc}},t),t
\right)
=
\boldsymbol v(x,y_{\mathrm{arc}}),
\]
where we recall that $\boldsymbol{v}$ is a test function on the reference Naghdi shell domain. 
The fluid test function $\boldsymbol{\tilde v}$ and the shell test function are compatible when
\[
\boldsymbol{\tilde v}
=
\boldsymbol v^\Gamma
\qquad
\text{on }\Gamma(t).
\]
Under this compatibility condition, the work at the interface should be balanced, namely
\[
\int_{\Gamma(t)}
\boldsymbol\sigma_f(\boldsymbol u,p)
\boldsymbol n_f
\cdot
\boldsymbol{\tilde v}
\,dS_t
+
\int_{\Gamma(t)}
\boldsymbol f_s^\Gamma
\cdot
\boldsymbol v^\Gamma
\,dS_t
=0.
\]
Next, we can pull the shell-side interface work back to $\omg$, which gives
\[
\begin{aligned}
&\int_{\Gamma(t)}
\boldsymbol f_s^\Gamma
\cdot
\boldsymbol v^\Gamma
\,dS_t
=
\int_\omg
\boldsymbol f_s^\Gamma
\left(
\Phi_s(x,y_{\mathrm{arc}},t),t
\right)
\cdot
\boldsymbol v(x,y_{\mathrm{arc}})
J_s(x,y_{\mathrm{arc}},t)
\,dx\,dy_{\mathrm{arc}}.
\end{aligned}
\]
Therefore, we can define the pulled-back shell load per unit parameter area by
\[
\boldsymbol f_s^{\mathrm{ref}}(x,y_{\mathrm{arc}},t)
=
\boldsymbol f_s^\Gamma
\left(
\Phi_s(x,y_{\mathrm{arc}},t),t
\right)
J_s(x,y_{\mathrm{arc}},t).
\]
Equivalently,
\[
\begin{aligned}
\boldsymbol f_s^{\mathrm{ref}}(x,y_{\mathrm{arc}},t)
={}&-
\boldsymbol\sigma_f
\Bigl(
\boldsymbol u
\left(
\Phi_s(x,y_{\mathrm{arc}},t),t
\right),
p
\left(
\Phi_s(x,y_{\mathrm{arc}},t),t
\right)
\Bigr)
\boldsymbol n_f
\Bigl(
\Phi_s(x,y_{\mathrm{arc}},t),t
\Bigr)
J_s(x,y_{\mathrm{arc}},t).
\end{aligned}
\]
\begin{remark}
Recall that the surface force $\boldsymbol f_s^{\mathrm{ref}}$ is related to the surface force $\boldsymbol f^\varepsilon$ appearing in  the shell equation \eqref{Naghdi_shell} as follows:
\[
\boldsymbol f_s^{\mathrm{ref}}
=
\boldsymbol f^\varepsilon\sqrt a,
\]
where $a$ is defined in \eqref{def:a}, and 
$
dS_0
=
\sqrt a\,dx\,dy_{\mathrm{arc}}
$.


Therefore, the fluid induced force appearing in the shell equation is
\[
\begin{aligned}
\boldsymbol f^\varepsilon(x,y_{\mathrm{arc}},t)
={}&-
\boldsymbol\sigma_f
\Bigl(
\boldsymbol u
\left(
\Phi_s(x,y_{\mathrm{arc}},t),t
\right),
p
\left(
\Phi_s(x,y_{\mathrm{arc}},t),t
\right)
\Bigr)
\boldsymbol n_f
\Bigl(
\Phi_s(x,y_{\mathrm{arc}},t),t
\Bigr)
\frac{J_s(x,y_{\mathrm{arc}},t)}{\sqrt a}.
\end{aligned}
\]
Consequently,
\[
\begin{aligned}
\int_\omg
\boldsymbol f^\varepsilon
\cdot
\boldsymbol v
\sqrt a
\,dx\,dy_{\mathrm{arc}}
=
\int_\omg
\boldsymbol f_s^{\mathrm{ref}}
\cdot
\boldsymbol v
\,dx\,dy_{\mathrm{arc}}
=
\int_{\Gamma(t)}
\boldsymbol f_s^\Gamma
\cdot
\boldsymbol v^\Gamma
\,dS_t.
\end{aligned}
\]
\end{remark}

\begin{remark}
In a general Naghdi shell model, the external interface virtual work may
contain contributions from both a distributed surface force and a
distributed surface couple:
\[
\delta W_{\Gamma}
=
\int_{\Gamma(t)}
\boldsymbol f_s^\Gamma
\cdot
\boldsymbol v^\Gamma
\,dS_t
+
\int_{\Gamma(t)}
\boldsymbol m_s^\Gamma
\cdot
\boldsymbol w^\Gamma
\,dS_t,
\]
where $\boldsymbol m_s^\Gamma$ is the couple per unit current surface
area, and
$
\boldsymbol w^\Gamma\circ\Phi_s=\boldsymbol w$.
In the present midsurface-coupling approximation, the fluid traction is
applied directly to the shell midsurface and is represented only by the
surface-force resultant
$\boldsymbol f_s^\Gamma
=
-\boldsymbol\sigma_f(\boldsymbol u,p)\boldsymbol n_f$.
We neglect an independent moment resultant, including any moment produced
by the offset between the physical fluid-contacting surface and the shell
midsurface, and therefore set
\[
\boldsymbol m_s^\Gamma=\boldsymbol0.
\]
Consequently, the interface virtual work reduces to
\[
\delta W_{\Gamma}
=
\int_{\Gamma(t)}
\boldsymbol f_s^\Gamma
\cdot
\boldsymbol v^\Gamma
\,dS_t
=
\int_\omg
\boldsymbol f_s^{\mathrm{ref}}
\cdot
\boldsymbol v
\,dx\,dy_{\mathrm{arc}},
\]
and no direct interface-work term involving the rotation test function
$\boldsymbol w$ appears. Nevertheless, this does not mean that $\omb$ remains unchanged. The applied force can still generate rotation indirectly because $\boldsymbol\eta$ and $\omb$ are coupled through the shell bilinear forms. It only means that the fluid does not apply a separate external couple directly to $\omb$.
\end{remark}

\begin{remark}
At a band-transition curve, $R'$ may jump. Consequently, the tangent
vectors and unit normal may have different one-sided values, so a single
classical normal may not exist exactly on the transition curve.
Nevertheless, these quantities are defined almost everywhere on each
smooth part of the surface.

A transition curve is one-dimensional, whereas the shell surface is
two-dimensional, so the curve has zero surface measure. Therefore, it
does not contribute to an ordinary surface integral such as
\[
\int_{\Gamma(t)}
\boldsymbol f_s^\Gamma
\cdot
\boldsymbol v^\Gamma
\,dS_t.
\]
No additional condition or integral is required there for a regular
distributed traction.
If the physical model includes a force or couple concentrated along a
band-transition curve $\mathcal C_j$, its virtual work must instead be
included explicitly through line terms of the form
\[
\sum_j
\int_{\mathcal C_j}
\boldsymbol f_{\mathrm{line},j}
\cdot
\boldsymbol v
\,ds
+
\sum_j
\int_{\mathcal C_j}
\boldsymbol m_{\mathrm{line},j}
\cdot
\boldsymbol w
\,ds.
\]
Such concentrated line forces $\boldsymbol f_{\mathrm{line},j}$ and line couples $\boldsymbol m_{\mathrm{line},j}$ are not included in our
present model.
\end{remark}

\section{Weak Formulation}
\label{sec:weak-formulation}

We now formulate the coupled fluid-shell-Windkessel problem in weak
form. The fluid equations are posed on the current time-dependent domain
$\Omega_f(t)$, whereas the shell equations are posed on the fixed
parameter domain $\omg$. The prescribed inlet velocity is imposed strongly, as is
the kinematic coupling condition on the fluid-structure interface. The dynamic coupling condition
is incorporated through the interface tractions. When the fluid and shell
equations are tested with functions having compatible interface traces,
their interface work contributions cancel, yielding the combined
variational formulation without an explicit interface work term.

\paragraph{Function spaces}
Because the inlet velocity is prescribed, the fluid trial set is affine. We
define
\[
\begin{aligned}
V_f(t)
=
\big\{
&\boldsymbol u:\Omega_f(t)\rightarrow\mathbb R^3,
\quad
\boldsymbol u
=
\hat{\boldsymbol u}
\circ
\hat\varphi_f(\cdot,t)^{-1}
:
\hat{\boldsymbol u}
\in
H^1(\hat\Omega_f;\mathbb R^3),
\\
&
\boldsymbol u=\boldsymbol u_{\mathrm{in}}
\text{ on }\Gamma_f^{\mathrm{in}}(t),
\quad
\boldsymbol u=\boldsymbol0
\text{ on }
\Sigma_f^D(t)
\setminus
\overline{\Gamma_f^{\mathrm{in}}(t)}
\big\}.
\end{aligned}
\]
The corresponding homogeneous fluid test space is
\[
\begin{aligned}
V_{f,0}(t)
=
\big\{
&\boldsymbol{\tilde v}:\Omega_f(t)\rightarrow\mathbb R^3,
\quad
\boldsymbol{\tilde v}
=
\hat{\boldsymbol v}
\circ
\hat\varphi_f(\cdot,t)^{-1}
:
\hat{\boldsymbol v}
\in
H^1(\hat\Omega_f;\mathbb R^3),\quad
\boldsymbol{\tilde v}=\boldsymbol0
\text{ on }\Sigma_f^D(t)
\big\}.
\end{aligned}
\]
The pressure space is
\[
Q_f(t)
=
\left\{
q:\Omega_f(t)\rightarrow\mathbb R,
\quad
q
=
\hat q
\circ
\hat\varphi_f(\cdot,t)^{-1}
:
\hat q\in L^2(\hat\Omega_f)
\right\}.
\]
For the shell displacement and rotation, we use the space $V_N$
defined in the Naghdi shell model section \ref{NaghdiModel}, including its clamped and
circumferential periodicity conditions. We continue to denote the shell displacement
velocity and rotational velocity as
$\dot{\boldsymbol\eta}
=
\partial_t\boldsymbol\eta,
~
\dot{\omb}
=
\partial_t\omb$.

The coupled trial space is
\[
\begin{aligned}
\mathcal V(t)
=
\big\{
&(\boldsymbol u(t),\boldsymbol\eta(t),\omb(t))
 \in V_f(t)\times V_N:
~
\boldsymbol u
\left(
\Phi_s(x,y_{\mathrm{arc}},t),t
\right)
=
\partial_t\boldsymbol\eta(x,y_{\mathrm{arc}},t)
\text{ for }
(x,y_{\mathrm{arc}})\in\omg
\big\}.
\end{aligned}
\]
The corresponding coupled test space is
\[
\begin{aligned}
\mathcal V_0(t)
=
\big\{
&(\boldsymbol{\tilde v}(t),\boldsymbol v,\boldsymbol w)
\in V_{f,0}(t) \times V_N:
~\boldsymbol{\tilde v}
\left(
\Phi_s(x,y_{\mathrm{arc}},t),t
\right)
=
\boldsymbol v(x,y_{\mathrm{arc}})
\text{ for }
(x,y_{\mathrm{arc}})\in\omg
\big\}.
\end{aligned}
\]
We note that the interface equalities in these definitions are understood in the trace
sense. Moreover, the compatibility condition imposed on the test functions is the
homogeneous counterpart of the strong kinematic coupling condition.
Because the fluid and shell share a single translational velocity on
the interface, their admissible virtual velocities must also have the same interface trace. 

\paragraph{Fluid weak form}
For
$\boldsymbol{\tilde v}(t)\in V_{f,0}(t)$ and $q(t)\in Q_f(t)$, multiplying the
equation (\ref{Navier-Stokes}) by
$\boldsymbol{\tilde v}(t)$ and integrating over $\Omega_f(t)$ gives
\begin{equation}
    \begin{aligned}
&\rho_f
\left(
\left.\partial_t\boldsymbol u\right|_{\hat\Omega_f},
\boldsymbol{\tilde v}
\right)_{\Omega_f(t)}
+
\rho_f
\left(
\big((\boldsymbol u-\boldsymbol w_f)\cdot\nabla\big)\boldsymbol u,
\boldsymbol{\tilde v}
\right)_{\Omega_f(t)}
\\
&\quad
+
2\mu_f
\left(
\mathbb D(\boldsymbol u),
\mathbb D(\boldsymbol{\tilde v})
\right)_{\Omega_f(t)}
-
\left(
p,
\nabla\cdot\boldsymbol{\tilde v}
\right)_{\Omega_f(t)}
+\left(
\nabla\cdot\boldsymbol u,
q
\right)_{\Omega_f(t)}
\\
&\quad=
\left(
\boldsymbol F_f,
\boldsymbol{\tilde v}
\right)_{\Omega_f(t)}
+
\left\langle
\boldsymbol\sigma_f(\boldsymbol u,p)\boldsymbol n_f,
\boldsymbol{\tilde v}
\right\rangle_{\Gamma(t)}
-
\left\langle
p_{\mathrm{out}}(t)\boldsymbol n_f,
\boldsymbol{\tilde v}
\right\rangle_{\Gamma_f^{\mathrm{out}}(t)}.
\end{aligned}
\label{fluid_weak_form}
\end{equation}
Here $(\cdot,\cdot)_{\Omega_f(t)}$ denotes the volume $L^2$ inner
product, and $\langle\cdot,\cdot\rangle$ denotes the appropriate surface
duality pairing, or the corresponding surface integral for sufficiently
regular fields. There is no inlet-traction term because
$\boldsymbol{\tilde v}=\boldsymbol0$ on
$\Gamma_f^{\mathrm{in}}(t)$. 

\paragraph{Shell weak form}
Using the shell model in equation~\eqref{Naghdi_shell}, 
we obtain that for every test function 
$(\boldsymbol v,\boldsymbol w)\in V_N$ the weak solution
$(\boldsymbol\eta(t),\omb(t))\in V_N$
must satisfy
\begin{equation}
\begin{aligned}
&\varepsilon\rho_s
\int_\omg
\partial_{tt}\boldsymbol\eta
\cdot
\boldsymbol v
\sqrt a
\,dx\,dy_{\mathrm{arc}}
+
\frac{\varepsilon^3\rho_s}{12}
\int_\omg
\partial_{tt}\omb
\cdot
\boldsymbol w
\sqrt a
\,dx\,dy_{\mathrm{arc}}
\\
&\quad
+
B_m
\big(
(\boldsymbol\eta,\omb),
(\boldsymbol v,\boldsymbol w)
\big)
+
B_b
\big(
(\boldsymbol\eta,\omb),
(\boldsymbol v,\boldsymbol w)
\big)
=
\int_\omg
\boldsymbol f_s^{\mathrm{ref}}
\cdot
\boldsymbol v
\,dx\,dy_{\mathrm{arc}}.
\end{aligned}
\label{shell_weak_form}
\end{equation}
The pulled-back shell load
$\boldsymbol f_s^{\mathrm{ref}}
=\boldsymbol f^\varepsilon\sqrt a$, as established in
Section~\ref{interface_conditions}. The factor $\sqrt a$ has already been
included in the definitions of $B_m$ and $B_b$.

\paragraph{Coupled weak formulation}
For compatible test functions, the dynamic coupling condition from
Section~\ref{interface_conditions} gives
\[
\left\langle
\boldsymbol\sigma_f(\boldsymbol u,p)\boldsymbol n_f,
\boldsymbol{\tilde v}
\right\rangle_{\Gamma(t)}
+
\int_\omg
\boldsymbol f_s^{\mathrm{ref}}
\cdot
\boldsymbol v
\,dx\,dy_{\mathrm{arc}}
=0.
\]
Hence the explicit interface-work terms cancel when the fluid and shell
equations are added. 

Therefore, by adding \eqref{fluid_weak_form} and \eqref{shell_weak_form} the coupled weak formulation reads as follows. Find a scalar
$P_B\in H^1(0,T)$
and time-dependent fields
$(\boldsymbol u,\boldsymbol\eta,\omb,p)$
such that, for almost every $t\in(0,T)$,
$
(\boldsymbol u(t),\boldsymbol\eta(t),\omb(t))
\in\mathcal V(t),
~
p(t)\in Q_f(t),
$
and for all
$(\boldsymbol{\tilde v}(t),\boldsymbol v,\boldsymbol w)
\in\mathcal V_0(t),~
q(t)\in Q_f(t)$, we have
\begin{equation}
\begin{aligned}
&\rho_f
\left(
\left.\partial_t\boldsymbol u\right|_{\hat\Omega_f},
\boldsymbol{\tilde v}
\right)_{\Omega_f(t)}
+
\rho_f
\left(
\big((\boldsymbol u-\boldsymbol w_f)\cdot\nabla\big)\boldsymbol u,
\boldsymbol{\tilde v}
\right)_{\Omega_f(t)}
\\
&\quad
+
2\mu_f
\left(
\mathbb D(\boldsymbol u),
\mathbb D(\boldsymbol{\tilde v})
\right)_{\Omega_f(t)}
-
\left(
p,
\nabla\cdot\boldsymbol{\tilde v}
\right)_{\Omega_f(t)}
+
\left(
\nabla\cdot\boldsymbol u,
q
\right)_{\Omega_f(t)}
+
\left\langle
\left(
P_B(t)+R_pQ(t)
\right)
\boldsymbol n_f,
\boldsymbol{\tilde v}
\right\rangle_{\Gamma_f^{\mathrm{out}}(t)}
\\
&\quad
+
\varepsilon\rho_s
\int_\omg
\partial_{tt}\boldsymbol\eta
\cdot
\boldsymbol v
\sqrt a
\,dx\,dy_{\mathrm{arc}}
+
\frac{\varepsilon^3\rho_s}{12}
\int_\omg
\partial_{tt}\omb
\cdot
\boldsymbol w
\sqrt a
\,dx\,dy_{\mathrm{arc}}
\\
&\quad
+
B_m
\big(
(\boldsymbol\eta,\omb),
(\boldsymbol v,\boldsymbol w)
\big)
+
B_b
\big(
(\boldsymbol\eta,\omb),
(\boldsymbol v,\boldsymbol w)
\big)
=
\left(
\boldsymbol F_f,
\boldsymbol{\tilde v}
\right)_{\Omega_f(t)}.
\end{aligned}
\label{coupled}
\end{equation}
This variational equation is supplemented by
\[
Q(t)
=
\int_{\Gamma_f^{\mathrm{out}}(t)}
\boldsymbol u\cdot\boldsymbol n_f
\,dS_t,
\qquad
C\dot P_B(t)
+
\frac{P_B(t)-P_v}{R_d}
=
Q(t).
\]

\section{The loosely coupled splitting scheme}
\label{sec:betascheme}
We now describe a loosely coupled time-splitting scheme for the coupled fluid-shell system. Let $\Delta t>0$ be the time step and set $t^n=n\Delta t, ~n=0,\ldots,N,~
N\Delta t=T$. We denote by $\boldsymbol u^n, p^n, P_B^n, \boldsymbol\eta^n, \dot{\boldsymbol\eta}^n, \omb^n, \dot{\omb}^n$ the solutions at time $t^n$. The current shell map and interface at time $t^n$ are 
$$
\Phi_s^n(x,y_{\mathrm{arc}}) = \varphi(x,y_{\mathrm{arc}}) + \boldsymbol\eta^n(x,y_{\mathrm{arc}}), \quad \Gamma^n=\Phi_s^n(\omg),
\quad
J_s^n
=
\left|
\partial_1\Phi_s^n
\times
\partial_2\Phi_s^n
\right|.$$  
At each completed time level, we write
\[
\boldsymbol\xi^n
=
\left(
\boldsymbol\eta^n,
\omb^n
\right),
\qquad
\dot{\boldsymbol\xi}^n
=
\left(
\dot{\boldsymbol\eta}^n,
\dot{\omb}^n
\right).
\]
where
$\dot{\boldsymbol\eta}^n
=
\partial_t\boldsymbol\eta(t^n),
~
\dot{\omb}^n
=
\partial_t\omb(t^n)$.
Given the solution at time $t^n$, the variables at time $t^{n+1}$ are
computed through the following three substeps.

\subsection*{Step 1: Solve the structure subproblem}

The force per unit current surface area exerted by the fluid on the shell is
\[
\boldsymbol f_s^{\Gamma,n}
=
-\boldsymbol t_f^n
=
-\boldsymbol\sigma_f
\left(
\boldsymbol u^n(\boldsymbol x),
p^n(\boldsymbol x)
\right)
\boldsymbol n_f^n(\boldsymbol x),
\qquad
\boldsymbol x\in\Gamma^n.
\]
Its pullback to the fixed shell parameter domain is
\begin{equation}
\begin{aligned}
\boldsymbol f_s^{\mathrm{ref},n}
&=
\left(
\boldsymbol f_s^{\Gamma,n}
\circ
\Phi_s^n
\right)
J_s^n
=
-
\left(
\boldsymbol t_f^n
\circ
\Phi_s^n
\right)
J_s^n
=
\boldsymbol f^{\varepsilon,n}\sqrt a.
\end{aligned}
\label{discrete_pulled_back_load}
\end{equation}
The structure subproblem applies the fraction
$\beta\in[0,1]$ of this previously computed shell load. It produces the
updated shell displacement and rotation, the structure-stage translational
velocity $\dot{\boldsymbol\eta}^{n+1}$, and the updated
rotational velocity $\dot{\omb}^{n+1}$. Define the completed fluid-interface velocity by
\[
\boldsymbol u_\Gamma^n
=
\boldsymbol u^n\circ\Phi_s^n.
\]
The discrete kinematic condition at every completed time level is
\begin{equation}
\dot{\boldsymbol\eta}^n
=
\boldsymbol u_\Gamma^n
=
\boldsymbol u^n\circ\Phi_s^n.
\label{completed_velocity_at_n}
\end{equation}
Thus, the translational velocity supplied to the next structure subproblem
is the completed interface velocity obtained from the preceding fluid
subproblem.

Find
$\boldsymbol\eta^{n+1},~
\omb^{n+1},~
\dot{\boldsymbol\eta}^{n+1},~
\dot{\omb}^{n+1}$
such that, for every
$(\boldsymbol v,\boldsymbol w)\in V_N$,
\begin{equation}
\begin{aligned}
&\varepsilon\rho_s
\int_\omg
\frac{
\dot{\boldsymbol\eta}^{n+1}
-
\boldsymbol u_\Gamma^n
}{\Delta t}
\cdot
\boldsymbol v
\sqrt a
\,dx\,dy_{\mathrm{arc}}
+
\frac{\varepsilon^3\rho_s}{12}
\int_\omg
\frac{
\dot{\omb}^{n+1}
-
\dot{\omb}^n
}{\Delta t}
\cdot
\boldsymbol w
\sqrt a
\,dx\,dy_{\mathrm{arc}}
\\
&\quad
+
B_m
\big(
(\boldsymbol\eta^{n+1},\omb^{n+1}),
(\boldsymbol v,\boldsymbol w)
\big)
+
B_b
\big(
(\boldsymbol\eta^{n+1},\omb^{n+1}),
(\boldsymbol v,\boldsymbol w)
\big)
=-\beta
\int_\omg
\left(
\boldsymbol t_f^n
\circ
\Phi_s^n
\right)
J_s^n
\cdot
\boldsymbol v
\,dx\,dy_{\mathrm{arc}}.
\end{aligned}
\label{structure_beta_subproblem}
\end{equation}
The shell displacement and rotation are updated by
\begin{equation}
\boldsymbol\eta^{n+1}
=
\boldsymbol\eta^n
+
\Delta t\,
\dot{\boldsymbol\eta}^{n+1},
\qquad
\omb^{n+1}
=
\omb^n
+
\Delta t\,
\dot{\omb}^{n+1}.
\label{structure_stage_update}
\end{equation}
These update relations are part of the structure subproblem and close
equation~\eqref{structure_beta_subproblem}. The updated shell map, interface, and surface Jacobian are
\[
\Phi_s^{n+1}
=
\varphi+\boldsymbol\eta^{n+1},
\qquad
\Gamma^{n+1}
=
\Phi_s^{n+1}(\omg),
\]
and
\[
J_s^{n+1}
=
\left|
\partial_1\Phi_s^{n+1}
\times
\partial_2\Phi_s^{n+1}
\right|.
\]
We note that the calculation of angular velocity $\dot{\omb}^{n+1}$ is completed in this
subproblem. It is not modified by the fluid subproblem because there is no
independent fluid kinematic condition or distributed fluid couple for the
rotation field.

\subsection*{Step 2: Calculate the ALE update}

Let
$\hat\Gamma_s=\varphi(\omg)$
denote the reference fluid-structure interface. We determine the fluid mesh movement
$\hat{\boldsymbol\eta}_f^{n+1}$ by solving the harmonic mesh-extension
problem in the reference fluid domain, given by \eqref{adaptive_ALE_ext}.
Then we can update the ALE map and fluid domain by
\[
\hat\varphi_f^{n+1}
=
\hat{\boldsymbol x}
+
\hat{\boldsymbol\eta}_f^{n+1},
\qquad
\Omega_f^{n+1}
=
\hat\varphi_f^{n+1}(\hat\Omega_f).
\]

The discrete mesh velocity, represented on $\Omega_f^{n+1}$, can be obtained by
\[
\boldsymbol w_f^{n+1}
=
\left(
\frac{
\hat\varphi_f^{n+1}
-
\hat\varphi_f^n
}{\Delta t}
\right)
\circ
\left(
\hat\varphi_f^{n+1}
\right)^{-1}.
\]
Geometric compatibility at the two consecutive time levels gives
\[
\hat\varphi_f^k\circ\varphi
=
\Phi_s^k,
\qquad
k=n,n+1.
\]
In particular,
\[
\left(
\hat\varphi_f^{n+1}
\right)^{-1}
\circ
\Phi_s^{n+1}
=
\varphi.
\]
Therefore, on the updated fluid-structure interface,
\[
\begin{aligned}
\boldsymbol w_f^{n+1}
\circ
\Phi_s^{n+1}
&=
\left(
\frac{
\hat\varphi_f^{n+1}
-
\hat\varphi_f^n
}{\Delta t}
\right)
\circ
\left(
\hat\varphi_f^{n+1}
\right)^{-1}
\circ
\Phi_s^{n+1}
=
\frac{
\hat\varphi_f^{n+1}\circ\varphi
-
\hat\varphi_f^n\circ\varphi
}{\Delta t}
\\
&=
\frac{
\Phi_s^{n+1}
-
\Phi_s^n
}{\Delta t}
=
\frac{
\boldsymbol\eta^{n+1}
-
\boldsymbol\eta^n
}{\Delta t}
=
\dot{\boldsymbol\eta}^{n+1}.
\end{aligned}
\]
Thus, the updated fluid mesh follows the structure-stage velocity. It is worth to mention that this
identity concerns the mesh motion only. The completed translational shell
velocity is determined from the fluid trace in Step~3.

\subsection*{Step 3: Solve the Fluid subproblem}

The updated fluid domain $\Omega_f^{n+1}$ and mesh velocity
$\boldsymbol w_f^{n+1}$ obtained in Step~2 are held fixed during the fluid
subproblem.
The fluid velocity time derivative is discretized at the reference coordinates. For
$\hat{\boldsymbol x}\in\hat\Omega_f$, we define the following notation
\[
\hat{\boldsymbol u}^k(\hat{\boldsymbol x})
=
\boldsymbol u^k
\left(
\hat\varphi_f^k(\hat{\boldsymbol x})
\right),
\qquad
k=n,n+1.
\]
The backward Euler discretization of the ALE time derivative is 
\begin{equation}
\left(
\left.
\delta_t\boldsymbol u^{n+1}
\right|_{\hat\Omega_f}
\right)
\circ
\hat\varphi_f^{n+1}
=
\frac{
\hat{\boldsymbol u}^{n+1}
-
\hat{\boldsymbol u}^n
}{\Delta t}.
\label{discrete_ALE_derivative}
\end{equation}
Equivalently,
\[
\left.
\delta_t\boldsymbol u^{n+1}
\right|_{\hat\Omega_f}
=
\left(
\frac{
\boldsymbol u^{n+1}\circ\hat\varphi_f^{n+1}
-
\boldsymbol u^n\circ\hat\varphi_f^n
}{\Delta t}
\right)
\circ
\left(
\hat\varphi_f^{n+1}
\right)^{-1}.
\]
Foe the convective term, we treat it in a semi-implicit fashion, as follows
$t^{n+1}$:
\[
\Big(
\big(
(\boldsymbol u-\boldsymbol w_f)\cdot\nabla
\big)
\boldsymbol u
\Big)^{n+1}
\approx
\left(
\left(
\boldsymbol u^n
-
\boldsymbol w_f^{n+1}
\right)
\cdot\nabla
\right)
\boldsymbol u^{n+1}.
\]
Whenever $\boldsymbol u^n$ appears in an integral over the current domain
$\Omega_f^{n+1}$, it is treated as the value of the previous fluid
velocity at the same ALE reference coordinate, namely
\[
\boldsymbol u^n
\left(
\hat\varphi_f^{n+1}(\hat{\boldsymbol x})
\right)
:=
\boldsymbol u^n
\left(
\hat\varphi_f^n(\hat{\boldsymbol x})
\right),
\qquad
\hat{\boldsymbol x}\in\hat\Omega_f.
\]
For the fluid velocity defined on $\Omega_f^{n+1}$, we define its trace pulled back to the fixed shell parameter domain by
\[
\boldsymbol u_\Gamma^{n+1}
=
\boldsymbol u^{n+1}\circ\Phi_s^{n+1},
\qquad
\boldsymbol{\tilde v}_\Gamma^{n+1}
=
\boldsymbol{\tilde v}\circ\Phi_s^{n+1}.
\]
And the new fluid traction on the current fluid-structure interface $\Gamma^{n+1}$ is
\[
\boldsymbol t_f^{n+1}
=
\boldsymbol\sigma_f
\left(
\boldsymbol u^{n+1},
p^{n+1}
\right)
\boldsymbol n_f^{n+1}
\qquad
\text{on }\Gamma^{n+1}.
\]

For our loosely coupled $\beta$ splitting scheme, the fluid subproblem incorporates the remaining translational shell
inertia through a Robin interface condition as follows
\begin{equation}
\begin{aligned}
&
\frac{\varepsilon\rho_s\sqrt a}{\Delta t}
\left(
\boldsymbol u_\Gamma^{n+1}
-
\dot{\boldsymbol\eta}^{n+1}
\right)
+
\left(
\boldsymbol t_f^{n+1}
\circ
\Phi_s^{n+1}
\right)
J_s^{n+1}
=
\beta
\left(
\boldsymbol t_f^n
\circ
\Phi_s^n
\right)
J_s^n
\qquad
\text{on }\omg.
\end{aligned}
\label{discrete_beta_robin_condition}
\end{equation}
Therefore, for every
$\boldsymbol{\tilde v}\in V_{f,0}(t^{n+1})$, the new interface-traction
work is represented by
\[
\begin{aligned}
\left\langle
\boldsymbol\sigma_f(\boldsymbol u,p)^{n+1}\boldsymbol n_f^{n+1},
\boldsymbol{\tilde v}
\right\rangle_{\Gamma^{n+1}}
&=
\left\langle
\boldsymbol t_f^{n+1},
\boldsymbol{\tilde v}
\right\rangle_{\Gamma^{n+1}}
\\
&=
-
\frac{\varepsilon\rho_s}{\Delta t}
\int_\omg
\left(
\boldsymbol u_\Gamma^{n+1}
-
\dot{\boldsymbol\eta}^{n+1}
\right)
\cdot
\boldsymbol{\tilde v}_\Gamma^{n+1}
\sqrt a
\,dx\,dy_{\mathrm{arc}}
+
\beta
\int_\omg
\left(
\boldsymbol t_f^n
\circ
\Phi_s^n
\right)
J_s^n
\cdot
\boldsymbol{\tilde v}_\Gamma^{n+1}
\,dx\,dy_{\mathrm{arc}}.
\end{aligned}
\]
Thus, in the fluid variational formulation, the interface integral involving the fluid normal traction is replaced by the RHS of the above identity.

The outlet flow rate and spatially uniform outlet pressure are defined by
\begin{equation}
Q^{n+1}
=
\int_{\Gamma_f^{\mathrm{out},n+1}}
\boldsymbol u^{n+1}
\cdot
\boldsymbol n_f^{n+1}
\,dS,
\quad \text{and}\quad
p_{\mathrm{out}}^{n+1}
=
P_B^{n+1}
+
R_pQ^{n+1}.
\label{discrete_outlet_flow}
\end{equation}

Together, we define the fluid subproblem by solving
$
\boldsymbol u^{n+1}\in V_f(t^{n+1}),
~
p^{n+1}\in Q_f(t^{n+1}),
~
P_B^{n+1}\in\mathbb R$, 
such that, for every
$
\boldsymbol{\tilde v}\in V_{f,0}(t^{n+1}),
~
q\in Q_f(t^{n+1}),
$ sastisfies
\begin{equation}
\begin{aligned}
&\rho_f
\left(
\left.
\delta_t\boldsymbol u^{n+1}
\right|_{\hat\Omega_f},
\boldsymbol{\tilde v}
\right)_{\Omega_f^{n+1}}
+
\rho_f
\left(
\left(
\left(
\boldsymbol u^n
-
\boldsymbol w_f^{n+1}
\right)
\cdot\nabla
\right)
\boldsymbol u^{n+1},
\boldsymbol{\tilde v}
\right)_{\Omega_f^{n+1}}
\\
&\quad
+
2\mu_f
\left(
\mathbb D(\boldsymbol u^{n+1}),
\mathbb D(\boldsymbol{\tilde v})
\right)_{\Omega_f^{n+1}}
-
\left(
p^{n+1},
\nabla\cdot\boldsymbol{\tilde v}
\right)_{\Omega_f^{n+1}}
+
\left(
\nabla\cdot\boldsymbol u^{n+1},
q
\right)_{\Omega_f^{n+1}}
\\
&\quad
+
\frac{\varepsilon\rho_s}{\Delta t}
\int_\omg
\left(
\boldsymbol u_\Gamma^{n+1}
-
\dot{\boldsymbol\eta}^{n+1}
\right)
\cdot
\boldsymbol{\tilde v}_\Gamma^{n+1}
\sqrt a
\,dx\,dy_{\mathrm{arc}}
+
\left\langle
\left(
P_B^{n+1}
+
R_pQ^{n+1}
\right)
\boldsymbol n_f^{n+1},
\boldsymbol{\tilde v}
\right\rangle_{\Gamma_f^{\mathrm{out},n+1}}
\\
&=
\left(
\boldsymbol F_f^{n+1},
\boldsymbol{\tilde v}
\right)_{\Omega_f^{n+1}}
+
\beta
\int_\omg
\left(
\boldsymbol t_f^n
\circ
\Phi_s^n
\right)
J_s^n
\cdot
\boldsymbol{\tilde v}_\Gamma^{n+1}
\,dx\,dy_{\mathrm{arc}}.
\end{aligned}
\label{fluid_beta_subproblem}
\end{equation}
While the Windkessel state is updated by the backward-Euler equation
\begin{equation}
C
\frac{
P_B^{n+1}
-
P_B^n
}{\Delta t}
+
\frac{
P_B^{n+1}
-
P_v
}{R_d}
=
Q^{n+1}.
\label{discrete_windkessel}
\end{equation}

\section{Stability analysis of the splitting scheme}
\label{sec:beta_stability}
In this section, we derive an energy estimate for our kinematically coupled $\beta$ scheme for a linearized problem in the fixed domain. The estimate identifies the fluid, shell, interface and Windkessel energy contributions controlled by the operator splitting. We emphasize that this analysis does not, by itself, establish the stability of the scheme for the fully nonlinear problem on the moving domain. Nevertheless, the numerical results presented in Section \ref{numerical_result} indicate that the scheme remains stable in this setting.

We consider the fixed fluid domain  $\hat \Omega_f$ and evaluate the coupling on the
reference interface
$\hat\Gamma=\varphi(\omg)$.
Thus,
\[
\Phi_s^n\approx\varphi,
\qquad
J_s^n\approx\sqrt a,
\qquad
\boldsymbol w_f^n=\boldsymbol0.
\]
Here, the reference geometry can still be considered as banded geometry, namely, $\sqrt a$ is not assumed to be equal to one. We define the pulled-back outward fluid
traction by
\begin{equation}
\boldsymbol\tau_f^n
=
\left(
\boldsymbol t_f^n\circ\varphi
\right)
\sqrt a.
\label{fixed_pulled_back_traction}
\end{equation}
Then, the corresponding load exerted by the fluid on the shell is
$-\boldsymbol\tau_f^n$.

To simplify the notation in the latter proof,  we denote the translational and rotational shell mass densities per unit
parameter area be
\[
m_t
=
\varepsilon\rho_s\sqrt a,
\qquad
m_r
=
\frac{\varepsilon^3\rho_s}{12}\sqrt a.
\]
For vector fields on domain $\omg$, define
\[
\begin{aligned}
M_t(\boldsymbol v,\boldsymbol z)
=
\int_\omg
m_t\,
\boldsymbol v\cdot\boldsymbol z
\,dx\,dy_{\mathrm{arc}},
\qquad
M_r(\boldsymbol w,\boldsymbol r)
=
\int_\omg
m_r\,
\boldsymbol w\cdot\boldsymbol r
\,dx\,dy_{\mathrm{arc}},
\end{aligned}
\]
and the associated norms as
\[
\|\boldsymbol v\|_{M_t}^2
=
M_t(\boldsymbol v,\boldsymbol v),
\qquad
\|\boldsymbol w\|_{M_r}^2
=
M_r(\boldsymbol w,\boldsymbol w).
\]
For a load $\boldsymbol g$ per unit parameter area, define the inverse
translational mass norm by
\begin{equation}
\|\boldsymbol g\|_{M_t^{-1}}^2
=
\int_\omg
\frac{|\boldsymbol g|^2}{m_t}
\,dx\,dy_{\mathrm{arc}}.
\label{inverse_shell_mass_norm}
\end{equation}
In particular, equation~\eqref{fixed_pulled_back_traction} gives
\[
\|\boldsymbol\tau_f^n\|_{M_t^{-1}}^2
=
\frac{1}{\varepsilon\rho_s}
\|\boldsymbol t_f^n\|_{L^2(\Gamma)}^2.
\]
This equality remains valid for the banded reference geometry because we included the
factor $\sqrt a$ in both the traction pullback and the surface measure.

\begin{remark}
To make the interface-traction energy well defined, we assume that the
pulled-back fluid traction satisfies
\[
\boldsymbol\tau_f^n
\in
L^2(\omg;\mathbb R^3),
\qquad
n=0,\ldots,N.
\]
This assumption may be interpreted either as an additional regularity
assumption on the time discrete solution or, at the fully discrete level,
as the use of an $L^2$-projected traction in a common finite element
interface space. Because of the assumption of the geometry given in \eqref{def:R}, the translational surface mass
density is uniformly nondegenerate:
\[
0<m_*
\le
m_t(x,y_{\mathrm{arc}})
\le
m^*
<\infty
\qquad
\text{for a.e. }
(x,y_{\mathrm{arc}})\in\omg.
\]
Consequently, $m_t^{-1}\in L^\infty(\omg)$, and the inverse mass norm
\[
\|\boldsymbol g\|_{M_t^{-1}}^2
=
\int_\omg
\frac{|\boldsymbol g|^2}{m_t}
\,dx\,dy_{\mathrm{arc}}
\]
is well defined.
\end{remark}

We define the total shell elastic bilinear form by
\[
A_s
\big(
(\boldsymbol\eta,\omb),
(\boldsymbol v,\boldsymbol w)
\big)
=
B_m
\big(
(\boldsymbol\eta,\omb),
(\boldsymbol v,\boldsymbol w)
\big)
+
B_b
\big(
(\boldsymbol\eta,\omb),
(\boldsymbol v,\boldsymbol w)
\big),
\]
where one can show that $A_s$ is symmetric and positive definite on $V_N$; see Lemma 2.1 in \cite{TambacaTutek2016}.

At every completed time level, the discrete kinematic condition gives
\[
\dot{\boldsymbol\eta}^n
=
\boldsymbol u_\Gamma^n
=
\boldsymbol u^n\circ\varphi.
\]
Therefore, in the fixed-domain setting, the Robin relation from the fluid subproblem
becomes
\begin{equation}
\frac{m_t}{\Delta t}
\left(
\boldsymbol u_\Gamma^{n+1}
-
\dot{\boldsymbol\eta}^{n+1}
\right)
+
\boldsymbol\tau_f^{n+1}
=
\beta\boldsymbol\tau_f^n
\qquad
\text{on }\omg.
\label{fixed_beta_robin_relation}
\end{equation}

\paragraph{Discrete energy}
The shell elastic energy is
\[
\mathcal E_s^n
=
\frac12
A_s
\big(
(\boldsymbol\eta^n,\omb^n),
(\boldsymbol\eta^n,\omb^n)
\big).
\]
The total discrete energy at the time $t^n$ is defined by
\begin{equation}
\begin{aligned}
\mathcal E^n
={}&
\frac{\rho_f}{2}
\|\boldsymbol u^n\|_{L^2(\Omega_f)}^2
+
\frac12
\|\boldsymbol u_\Gamma^n\|_{M_t}^2
+
\frac12
\|\dot{\omb}^n\|_{M_r}^2
+
\mathcal E_s^n
+
\frac{C}{2}
|P_B^n-P_v|^2
+
\frac{\Delta t^2}{2}
\|\boldsymbol\tau_f^n\|_{M_t^{-1}}^2.
\end{aligned}
\label{beta_total_discrete_energy}
\end{equation}
Where the last term is a numerical interface traction energy generated by the Robin splitting.

The physical dissipation over the time step is
\begin{equation}
\begin{aligned}
\mathcal D_{\mathrm{phys}}^{n+1}
={}&
2\mu_f\Delta t
\|\mathbb D(\boldsymbol u^{n+1})\|_{L^2(\Omega_f)}^2
+
\Delta t\,R_p|Q^{n+1}|^2
+
\frac{\Delta t}{R_d}
|P_B^{n+1}-P_v|^2.
\end{aligned}
\label{beta_physical_dissipation}
\end{equation}
The numerical dissipation is
\begin{equation}
\begin{aligned}
\mathcal D_{\mathrm{num}}^{n+1}
=&
\frac{\rho_f}{2}
\|\boldsymbol u^{n+1}-\boldsymbol u^n\|_{L^2(\Omega_f)}^2
+
\frac12
\|\dot{\boldsymbol\eta}^{n+1}
-\boldsymbol u_\Gamma^n\|_{M_t}^2
+
\frac12
\|\dot{\omb}^{n+1}
-\dot{\omb}^n\|_{M_r}^2
\\
&
+
\frac12
A_s
\big(
(\boldsymbol\eta^{n+1}-\boldsymbol\eta^n,
 \omb^{n+1}-\omb^n),
(\boldsymbol\eta^{n+1}-\boldsymbol\eta^n,
 \omb^{n+1}-\omb^n)
\big)
\\
&
+
\frac{C}{2}
|P_B^{n+1}-P_B^n|^2.
\end{aligned}
\label{beta_numerical_dissipation}
\end{equation}
Finally, the numerical dissipation associated with the traction split is
\begin{equation}
\mathcal D_\beta^{n+1}
=
\frac{\Delta t^2}{2}
(1-\beta^2)
\|\boldsymbol\tau_f^n\|_{M_t^{-1}}^2.
\label{beta_interface_dissipation}
\end{equation}
For $0\leq\beta\leq1$, all terms in
equations~\eqref{beta_physical_dissipation}-\eqref{beta_interface_dissipation} are nonnegative.

\paragraph{Semi-implicit convection}
For the semi-implicit convective term used in the fluid subproblem, define
\begin{equation}
\mathcal C_f^{n+1}
=
\rho_f\Delta t
\left(
\big(
(\boldsymbol u^n\cdot\nabla)
\boldsymbol u^{n+1}
\big),
\boldsymbol u^{n+1}
\right)_{\Omega_f}.
\label{convective_energy_contribution}
\end{equation}
Integration by parts gives
\begin{equation}
\mathcal C_f^{n+1}
=
\frac{\rho_f\Delta t}{2}
\int_{\partial\Omega_f}
\left(
\boldsymbol u^n\cdot\boldsymbol n_f
\right)
\left|\boldsymbol u^{n+1}\right|^2
\,dS
-
\frac{\rho_f \Delta t}{2}
\int_{\Omega_f}
\left(\nabla \cdot \boldsymbol{u}^n\right)
\left|\boldsymbol{u}^{n+1}\right|^2\,d\boldsymbol{x}.
\label{convective_boundary_flux}
\end{equation}
Consequently, the standard semi-implicit convective form is not
automatically energy neutral on an open domain. It is energy neutral if it
is replaced by the skew-symmetric form
\[
\begin{aligned}
c_{\mathrm{skew}}
(\boldsymbol u^n;
 \boldsymbol u^{n+1},
 \boldsymbol{\tilde v})
=
\frac{\rho_f}{2}
\bigg(&
\left(
(\boldsymbol u^n\cdot\nabla)
\boldsymbol u^{n+1},
\boldsymbol{\tilde v}
\right)_{\Omega_f}
-
\left(
(\boldsymbol u^n\cdot\nabla)
\boldsymbol{\tilde v},
\boldsymbol u^{n+1}
\right)_{\Omega_f}
\bigg),
\end{aligned}
\]
because
\[
c_{\mathrm{skew}}
(\boldsymbol u^n;
 \boldsymbol u^{n+1},
 \boldsymbol u^{n+1})
=0.
\]
Alternatively, the standard form may be retained if the boundary flux in
equation~\eqref{convective_boundary_flux} is nonnegative or is treated as
prescribed kinetic-energy flux.

\begin{theorem}[Fixed-domain energy identity]
\label{thm:fixed_beta_stability}
Assume that the fluid domain and reference interface are fixed, that the
inlet condition is homogeneous so that $\boldsymbol u^{n+1}$ is an
admissible fluid test function. Assume also that the pulled-back fluid traction satisfies
$\boldsymbol\tau_f^n
\in
L^2(\omg;\mathbb R^3),~
n=0,\ldots,N$, and that the Robin interface relation
\eqref{fixed_beta_robin_relation} holds in
$L^2(\omg;\mathbb R^3)$. Consequently, the inverse-mass traction norm
\[
\|\boldsymbol\tau_f^n\|_{M_t^{-1}}^2
=
\int_\omg
\frac{|\boldsymbol\tau_f^n|^2}{m_t}
\,dx\,dy_{\mathrm{arc}}
\]
is well defined. Let $0\leq\beta\leq1$ and assume that $P_v$ is constant.
Then the splitting scheme satisfies
\begin{equation}
\begin{aligned}
\mathcal E^{n+1}
&+
\mathcal D_{\mathrm{phys}}^{n+1}
+
\mathcal D_{\mathrm{num}}^{n+1}
+
\mathcal D_\beta^{n+1}
+
\mathcal C_f^{n+1}
=
\mathcal E^n
+
\Delta t
\left(
\boldsymbol F_f^{n+1},
\boldsymbol u^{n+1}
\right)_{\Omega_f}
-
\Delta t\,P_vQ^{n+1}.
\end{aligned}
\label{fixed_beta_energy_identity}
\end{equation}
Here $\mathcal C_f^{n+1}$ denotes the contribution of the standard
semi-implicit convective term defined in equation~\eqref{convective_energy_contribution}. If the convective term is
omitted or replaced by the skew-symmetric form, the same identity holds
with $\mathcal C_f^{n+1}=0$, and if
$\boldsymbol F_f^{n+1}=\boldsymbol0$ and $P_v=0$, then
\begin{equation}
\mathcal E^{n+1}
+
\mathcal D_{\mathrm{phys}}^{n+1}
+
\mathcal D_{\mathrm{num}}^{n+1}
+
\mathcal D_\beta^{n+1}
\leq
\mathcal E^n.
\label{fixed_beta_energy_inequality}
\end{equation}
The same inequality holds for the standard semi-implicit convective form
whenever $\mathcal C_f^{n+1}\geq0$.
\end{theorem}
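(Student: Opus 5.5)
The plan is the standard discrete energy argument: test each subproblem with its own discrete solution, apply the polarization identity $2(a-b)\cdot a=|a|^2-|b|^2+|a-b|^2$ to every backward-difference term, and add the results so that the interface terms cancel up to the numerical traction energy. Specifically, I will take the following test functions:
\begin{itemize}
\item in the structure subproblem \eqref{structure_beta_subproblem}, specialized to the fixed geometry $\Phi_s^n=\varphi$, $J_s^n=\sqrt a$, the pair $(\boldsymbol v,\boldsymbol w)=\Delta t(\dot{\boldsymbol\eta}^{n+1},\dot{\omb}^{n+1})=(\boldsymbol\eta^{n+1}-\boldsymbol\eta^n,\omb^{n+1}-\omb^n)$, which lies in $V_N$ by \eqref{structure_stage_update};
\item in the fluid subproblem \eqref{fluid_beta_subproblem}, $\boldsymbol{\tilde v}=\Delta t\,\boldsymbol u^{n+1}$ and $q=\Delta t\,p^{n+1}$, which is admissible because the inlet data are homogeneous and $\boldsymbol w_f=\boldsymbol 0$;
\item the Windkessel update \eqref{discrete_windkessel}, multiplied by $\Delta t(P_B^{n+1}-P_v)$.
\end{itemize}

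Next I collect the bulk terms. In the structure equation, polarization of the rotational inertia term gives $\tfrac12\|\dot{\omb}^{n+1}\|_{M_r}^2-\tfrac12\|\dot{\omb}^n\|_{M_r}^2+\tfrac12\|\dot{\omb}^{n+1}-\dot{\omb}^n\|_{M_r}^2$. Since $A_s$ is symmetric (Lemma 2.1 in \cite{TambacaTutek2016}), $A_s(\boldsymbol\xi^{n+1},\boldsymbol\xi^{n+1}-\boldsymbol\xi^n)=\mathcal E_s^{n+1}-\mathcal E_s^n+\tfrac12A_s(\boldsymbol\xi^{n+1}-\boldsymbol\xi^n,\boldsymbol\xi^{n+1}-\boldsymbol\xi^n)$. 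In the fluid equation:
\begin{itemize}
\item the time derivative yields the fluid kinetic energy difference together with $\tfrac{\rho_f}{2}\|\boldsymbol u^{n+1}-\boldsymbol u^n\|^2$;
\item the viscous term gives $2\mu_f\Delta t\|\mathbb D(\boldsymbol u^{n+1})\|^2$;
\item the two pressure terms cancel;
\item the convective term is exactly $\mathcal C_f^{n+1}$.
\end{itemize}
The outlet term equals $\Delta t(P_B^{n+1}+R_pQ^{n+1})Q^{n+1}$. I write $P_B^{n+1}Q^{n+1}=(P_B^{n+1}-P_v)Q^{n+1}+P_vQ^{n+1}$ and substitute $Q^{n+1}$ from the Windkessel equation; polarization of $C(P_B^{n+1}-P_B^n)(P_B^{n+1}-P_v)$, using that $P_v$ is constant, produces the Windkessel energy, $\tfrac C2|P_B^{n+1}-P_B^n|^2$, $\tfrac{\Delta t}{R_d}|P_B^{n+1}-P_v|^2$, $\Delta tR_p|Q^{n+1}|^2$, and leaves $-\Delta t P_vQ^{n+1}$ on the right-hand side.

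The main obstacle is the interface coupling. The structure inertia term $M_t(\dot{\boldsymbol\eta}^{n+1}-\boldsymbol u_\Gamma^n,\dot{\boldsymbol\eta}^{n+1})$ and the fluid Robin term $M_t(\boldsymbol u_\Gamma^{n+1}-\dot{\boldsymbol\eta}^{n+1},\boldsymbol u_\Gamma^{n+1})$ are polarized separately. Their sum is $\tfrac12\|\boldsymbol u_\Gamma^{n+1}\|_{M_t}^2-\tfrac12\|\boldsymbol u_\Gamma^n\|_{M_t}^2+\tfrac12\|\dot{\boldsymbol\eta}^{n+1}-\boldsymbol u_\Gamma^n\|_{M_t}^2+\tfrac12\|\boldsymbol u_\Gamma^{n+1}-\dot{\boldsymbol\eta}^{n+1}\|_{M_t}^2$. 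Here I use the kinematic identity $\dot{\boldsymbol\eta}^n=\boldsymbol u_\Gamma^n$ to identify the shell kinetic energy with $\tfrac12\|\boldsymbol u_\Gamma^n\|_{M_t}^2$. The two $\beta$-traction right-hand sides combine to $\beta\Delta t\int_\omg\boldsymbol\tau_f^n\cdot(\boldsymbol u_\Gamma^{n+1}-\dot{\boldsymbol\eta}^{n+1})$. I then invoke \eqref{fixed_beta_robin_relation} in $L^2$, which is valid by the hypothesis on $\boldsymbol\tau_f^n$, in the form $\boldsymbol u_\Gamma^{n+1}-\dot{\boldsymbol\eta}^{n+1}=\tfrac{\Delta t}{m_t}(\beta\boldsymbol\tau_f^n-\boldsymbol\tau_f^{n+1})$. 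This turns the last polarization remainder into $\tfrac{\Delta t^2}{2}\|\beta\boldsymbol\tau_f^n-\boldsymbol\tau_f^{n+1}\|_{M_t^{-1}}^2$ and the traction work into $\Delta t^2\int_\omg\beta\boldsymbol\tau_f^n\cdot(\beta\boldsymbol\tau_f^n-\boldsymbol\tau_f^{n+1})/m_t$. Moving the work to the left, the pointwise identity $\tfrac12|\beta a-b|^2-\beta a\cdot(\beta a-b)=\tfrac12|b|^2-\tfrac12\beta^2|a|^2$ gives $\tfrac{\Delta t^2}{2}\|\boldsymbol\tau_f^{n+1}\|_{M_t^{-1}}^2-\tfrac{\Delta t^2}{2}\|\boldsymbol\tau_f^n\|_{M_t^{-1}}^2+\mathcal D_\beta^{n+1}$. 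This is exactly the numerical traction energy increment plus the split dissipation. Keeping track of the signs so that the mass-weighted norm, with $m_t^{-1}\in L^\infty$, matches the $\sqrt a$-weighted pullback is the step where I expect to need the most care.

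Summing all contributions gives \eqref{fixed_beta_energy_identity}. If the convection is omitted, or replaced by $c_{\mathrm{skew}}$, which vanishes on the diagonal, then $\mathcal C_f^{n+1}=0$. With $\boldsymbol F_f^{n+1}=\boldsymbol 0$ and $P_v=0$ the right-hand side reduces to $\mathcal E^n$, which yields \eqref{fixed_beta_energy_inequality}. The same bound holds whenever $\mathcal C_f^{n+1}\ge0$, since that term can then be dropped; all remaining dissipation terms are nonnegative for $0\le\beta\le1$ and positive-semidefinite $A_s$.
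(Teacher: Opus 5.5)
Your proposal is correct and follows essentially the same route as the paper. You test the structure step with $\Delta t(\dot{\boldsymbol\eta}^{n+1},\dot{\omb}^{n+1})$ and the fluid step with $\Delta t(\boldsymbol u^{n+1},p^{n+1})$, polarize every backward difference, and use the Robin relation \eqref{fixed_beta_robin_relation} on the combined $\beta$-traction work so that $\tfrac12\|\boldsymbol u_\Gamma^{n+1}-\dot{\boldsymbol\eta}^{n+1}\|_{M_t}^2$ cancels exactly, leaving the traction-energy increment plus $\mathcal D_\beta^{n+1}$; the outlet work is handled through \eqref{discrete_windkessel} exactly as in \eqref{discrete_windkessel_energy_identity}, the only cosmetic difference being that you merge the paper's two separate interface polarizations into the single pointwise identity $\tfrac12|\beta a-b|^2-\beta a\cdot(\beta a-b)=\tfrac12|b|^2-\tfrac12\beta^2|a|^2$.
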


\begin{proof} Test the structure subproblem \eqref{structure_beta_subproblem} with
$(\boldsymbol v,\boldsymbol w)
=
(\dot{\boldsymbol\eta}^{n+1},
 \dot{\omb}^{n+1})$
and multiply the resulting equation by $\Delta t$. The translational
inertia term becomes
\[
\begin{aligned}
&\Delta t\,
M_t
\left(
\frac{
\dot{\boldsymbol\eta}^{n+1}
-\boldsymbol u_\Gamma^n
}{\Delta t},
\dot{\boldsymbol\eta}^{n+1}
\right)
=
\frac12
\left(
\|\dot{\boldsymbol\eta}^{n+1}\|_{M_t}^2
-
\|\boldsymbol u_\Gamma^n\|_{M_t}^2
+
\|\dot{\boldsymbol\eta}^{n+1}
-\boldsymbol u_\Gamma^n\|_{M_t}^2
\right).
\end{aligned}
\]
Similarly, the rotational inertia term gives
\[
\begin{aligned}
&\Delta t\,
M_r
\left(
\frac{
\dot{\omb}^{n+1}
-\dot{\omb}^n
}{\Delta t},
\dot{\omb}^{n+1}
\right)
=
\frac12
\left(
\|\dot{\omb}^{n+1}\|_{M_r}^2
-
\|\dot{\omb}^n\|_{M_r}^2
+
\|\dot{\omb}^{n+1}
-\dot{\omb}^n\|_{M_r}^2
\right).
\end{aligned}
\]
Using the structure updates
\[
\boldsymbol\eta^{n+1}-\boldsymbol\eta^n
=
\Delta t\,
\dot{\boldsymbol\eta}^{n+1},
\qquad
\omb^{n+1}-\omb^n
=
\Delta t\,
\dot{\omb}^{n+1},
\]
the bilinear term $A_s$ becomes
\[
\begin{aligned}
&\Delta t\,
A_s
\big(
(\boldsymbol\eta^{n+1},\omb^{n+1}),
(\dot{\boldsymbol\eta}^{n+1},
\dot{\omb}^{n+1})
\big)
=
\mathcal E_s^{n+1}
-
\mathcal E_s^n
+
\frac12
A_s
\big(
(\boldsymbol\eta^{n+1}-\boldsymbol\eta^n,
 \omb^{n+1}-\omb^n),
(\boldsymbol\eta^{n+1}-\boldsymbol\eta^n,
 \omb^{n+1}-\omb^n)
\big).
\end{aligned}
\]

Next, we test the fluid subproblem \eqref{fluid_beta_subproblem} with
$\boldsymbol{\tilde v}=\boldsymbol u^{n+1},~
q=p^{n+1}$,
and multiply it by $\Delta t$. The pressure term cancels with the divergence free term
\[
-\left(p^{n+1}, \nabla \cdot \boldsymbol{u}^{n+1}\right)
+\left(\nabla \cdot \boldsymbol{u}^{n+1}, p^{n+1}\right)
= 0.
\]
The fluid inertia term becomes
\[
\begin{aligned}
&\rho_f\Delta t
\left(
\frac{
\boldsymbol u^{n+1}-\boldsymbol u^n
}{\Delta t},
\boldsymbol u^{n+1}
\right)_{\Omega_f}
=
\frac{\rho_f}{2}
\left(
\|\boldsymbol u^{n+1}\|_{L^2(\Omega_f)}^2
-
\|\boldsymbol u^n\|_{L^2(\Omega_f)}^2
+
\|\boldsymbol u^{n+1}-\boldsymbol u^n\|_{L^2(\Omega_f)}^2
\right).
\end{aligned}
\]
The viscous term gives the first term in
$\mathcal D_{\mathrm{phys}}^{n+1}$, and the semi-implicit convective term
gives $\mathcal C_f^{n+1}$.

Regarding the interface terms, we have
\begin{equation}
\begin{aligned}
\mathcal I^{n+1}
={}&
M_t
\left(
\boldsymbol u_\Gamma^{n+1}
-
\dot{\boldsymbol\eta}^{n+1},
\boldsymbol u_\Gamma^{n+1}
\right)
+
\beta\Delta t
\int_\omg
\boldsymbol\tau_f^n
\cdot
\dot{\boldsymbol\eta}^{n+1}
\,dx\,dy_{\mathrm{arc}}
-
\beta\Delta t
\int_\omg
\boldsymbol\tau_f^n
\cdot
\boldsymbol u_\Gamma^{n+1}
\,dx\,dy_{\mathrm{arc}}
\\
={}&
M_t
\left(
\boldsymbol u_\Gamma^{n+1}
-
\dot{\boldsymbol\eta}^{n+1},
\boldsymbol u_\Gamma^{n+1}
\right)
-
\beta\Delta t
\int_\omg
\boldsymbol\tau_f^n
\cdot
\left(
\boldsymbol u_\Gamma^{n+1}
-
\dot{\boldsymbol\eta}^{n+1}
\right)
\,dx\,dy_{\mathrm{arc}}.
\end{aligned}
\label{combined_interface_terms}
\end{equation}
The Robin relation \eqref{fixed_beta_robin_relation} gives
\begin{equation}
\boldsymbol u_\Gamma^{n+1}
-
\dot{\boldsymbol\eta}^{n+1}
=
\frac{\Delta t}{m_t}
\left(
\beta\boldsymbol\tau_f^n
-
\boldsymbol\tau_f^{n+1}
\right).
\label{interface_velocity_difference}
\end{equation}
Apply the identity
\[
(\boldsymbol a-\boldsymbol b)\cdot\boldsymbol a
=
\frac12
\left(
|\boldsymbol a|^2
-
|\boldsymbol b|^2
+
|\boldsymbol a-\boldsymbol b|^2
\right)
\]
to the first term in
equation~\eqref{combined_interface_terms}. This gives
\[
\begin{aligned}
&M_t
\left(
\boldsymbol u_\Gamma^{n+1}
-
\dot{\boldsymbol\eta}^{n+1},
\boldsymbol u_\Gamma^{n+1}
\right)
=
\frac12
\left(
\|\boldsymbol u_\Gamma^{n+1}\|_{M_t}^2
-
\|\dot{\boldsymbol\eta}^{n+1}\|_{M_t}^2
\right)
+
\frac12
\left\|
\boldsymbol u_\Gamma^{n+1}
-
\dot{\boldsymbol\eta}^{n+1}
\right\|_{M_t}^2.
\end{aligned}
\]
Next, the second term in \eqref{combined_interface_terms} can be rewritten by using \eqref{interface_velocity_difference}, which gives
\[
\begin{aligned}
&\beta\Delta t
\int_\omg
\boldsymbol\tau_f^n
\cdot
\left(
\boldsymbol u_\Gamma^{n+1}
-
\dot{\boldsymbol\eta}^{n+1}
\right)
\,dx\,dy_{\mathrm{arc}}
=
\Delta t^2
\int_\omg
\frac{
\beta\boldsymbol\tau_f^n
\cdot
\left(
\beta\boldsymbol\tau_f^n
-
\boldsymbol\tau_f^{n+1}
\right)
}{m_t}
\,dx\,dy_{\mathrm{arc}}.
\end{aligned}
\]
Applying the same algebraic identity in the inverse-mass inner product
yields
\[
\begin{aligned}
&\beta\Delta t
\int_\omg
\boldsymbol\tau_f^n
\cdot
\left(
\boldsymbol u_\Gamma^{n+1}
-
\dot{\boldsymbol\eta}^{n+1}
\right)
\,dx\,dy_{\mathrm{arc}}
=
\frac{\Delta t^2}{2}
\left(
\|\beta\boldsymbol\tau_f^n\|_{M_t^{-1}}^2
-
\|\boldsymbol\tau_f^{n+1}\|_{M_t^{-1}}^2
\right)
+
\frac{\Delta t^2}{2}
\left\|
\beta\boldsymbol\tau_f^n
-
\boldsymbol\tau_f^{n+1}
\right\|_{M_t^{-1}}^2.
\end{aligned}
\]
Furthermore, equation~\eqref{interface_velocity_difference} implies
\[
\left\|
\boldsymbol u_\Gamma^{n+1}
-
\dot{\boldsymbol\eta}^{n+1}
\right\|_{M_t}^2
=
\int_\omg
m_t
\left|
\frac{\Delta t}{m_t}
\left(
\beta\boldsymbol\tau_f^n
-
\boldsymbol\tau_f^{n+1}
\right)
\right|^2
\,dx\,dy_{\mathrm{arc}}
=
\Delta t^2
\left\|
\beta\boldsymbol\tau_f^n
-
\boldsymbol\tau_f^{n+1}
\right\|_{M_t^{-1}}^2.
\]
Substituting these identities into
equation~\eqref{combined_interface_terms}, we obtain
\[
\begin{aligned}
\mathcal I^{n+1}
={}&
\frac12
\left(
\|\boldsymbol u_\Gamma^{n+1}\|_{M_t}^2
-
\|\dot{\boldsymbol\eta}^{n+1}\|_{M_t}^2
\right)
+
\frac12
\left\|
\boldsymbol u_\Gamma^{n+1}
-
\dot{\boldsymbol\eta}^{n+1}
\right\|_{M_t}^2
\\
&-
\frac{\Delta t^2}{2}
\left(
\|\beta\boldsymbol\tau_f^n\|_{M_t^{-1}}^2
-
\|\boldsymbol\tau_f^{n+1}\|_{M_t^{-1}}^2
\right)
-
\frac{\Delta t^2}{2}
\left\|
\beta\boldsymbol\tau_f^n
-
\boldsymbol\tau_f^{n+1}
\right\|_{M_t^{-1}}^2.
\end{aligned}
\]
Since the two mismatch terms cancel exactly, we have
\begin{equation}
\begin{aligned}
\mathcal I^{n+1}
={}&
\frac12
\left(
\|\boldsymbol u_\Gamma^{n+1}\|_{M_t}^2
-
\|\dot{\boldsymbol\eta}^{n+1}\|_{M_t}^2
\right)
+
\frac{\Delta t^2}{2}
\left(
\|\boldsymbol\tau_f^{n+1}\|_{M_t^{-1}}^2
-
\|\beta\boldsymbol\tau_f^n\|_{M_t^{-1}}^2
\right).
\end{aligned}
\label{interface_energy_identity}
\end{equation}
Furthermore, we can rewrite $\frac{\Delta t^2}{2}
\left(
\|\boldsymbol\tau_f^{n+1}\|_{M_t^{-1}}^2
-
\|\beta\boldsymbol\tau_f^n\|_{M_t^{-1}}^2
\right)$ as follows
\[
\begin{aligned}
\frac{\Delta t^2}{2}
\left(
\|\boldsymbol\tau_f^{n+1}\|_{M_t^{-1}}^2
-
\|\beta\boldsymbol\tau_f^n\|_{M_t^{-1}}^2
\right)
=
\frac{\Delta t^2}{2}
\left(
\|\boldsymbol\tau_f^{n+1}\|_{M_t^{-1}}^2
-
\|\boldsymbol\tau_f^n\|_{M_t^{-1}}^2
\right)
+
\frac{\Delta t^2}{2}
(1-\beta^2)
\|\boldsymbol\tau_f^n\|_{M_t^{-1}}^2.
\end{aligned}
\]
This produces the interface-traction energy and the nonnegative
$\beta$-splitting dissipation.

Regarding the work done by the outlet boundary condition, from \eqref{discrete_outlet_flow}
and \eqref{discrete_windkessel}, we can derive
\begin{equation}
\begin{aligned}
\Delta t\,
p_{\mathrm{out}}^{n+1}Q^{n+1}
&=
\frac{C}{2}
\left(
|P_B^{n+1}-P_v|^2
-
|P_B^n-P_v|^2
+
|P_B^{n+1}-P_B^n|^2
\right)
\\
&\quad
+
\frac{\Delta t}{R_d}
|P_B^{n+1}-P_v|^2
+
\Delta t\,R_p|Q^{n+1}|^2
+
\Delta t\,P_vQ^{n+1},
\end{aligned}
\label{discrete_windkessel_energy_identity}
\end{equation}
given that $P_v$ is constant.
The first line on the right hand side supplies the Windkessel storage
increment and its backward-Euler numerical dissipation. The next two terms
are the distal and proximal resistance dissipations. The prescribed distal
pressure term is moved to the right-hand side as external work.

Collecting the fluid, shell, interface, and Windkessel identities gives
equation~\eqref{fixed_beta_energy_identity}. Under the additional
zero-work and convection assumptions stated in the theorem, all remaining
terms are nonnegative, which yields
equation~\eqref{fixed_beta_energy_inequality}.
Finally, summing equation~\eqref{fixed_beta_energy_inequality} from $n=0$ to
$n=N-1$ gives
\[
\begin{aligned}
\mathcal E^N
&+
\sum_{n=0}^{N-1}
\left(
\mathcal D_{\mathrm{phys}}^{n+1}
+
\mathcal D_{\mathrm{num}}^{n+1}
+
\mathcal D_\beta^{n+1}
\right)
\leq
\mathcal E^0.
\end{aligned}
\]
Thus, the fixed-domain scheme is unconditionally stable with respect to
the added-mass coupling for every $\beta\in[0,1]$, subject to the
convection and boundary-work assumptions in
Theorem~\ref{thm:fixed_beta_stability}.
\end{proof}

\begin{remark}(Nonhomogeneous inlet data)
We want to point out that for the prescribed nonzero inlet velocity used in the full problem,
$\boldsymbol u^{n+1}$ is not an admissible homogeneous test function.
One must first introduce an $H^1$ lifting of
$\boldsymbol u_{\mathrm{in}}^{n+1}$ and test the fluid equation with the
homogeneous remainder. The resulting estimate contains additional work
terms involving the lifting and its discrete time difference. Therefore,
the prescribed inlet does not invalidate the energy method, but its work
cannot be omitted from the right-hand side of the estimate.
\end{remark}

\section{Estimation of the Windkessel parameters using data assimilation}
\label{sec:ukf_parameter_estimation}

To estimate the Windkessel parameters from pressure measurements, we use an
unscented Kalman filter applied to a low-dimensional parameter state. The finite-element variables for the fluid velocity and pressure and for the
shell displacement, rotation, and their velocities are not included in the
Kalman filter state. Instead, these unknowns are computed deterministically by fluid structure interaction solver and provide the nonlinear parameter-to-observation
map required by the filter. We therefore refer to this procedure as a
reduced-state unscented Kalman filter.

\paragraph{Parameter state and logarithmic transformation}

For the single-outlet Windkessel model considered in this work, the Kalman filter state contains only a vector of three parameters
\begin{equation}
\boldsymbol\theta
=
\begin{pmatrix}
R_p & R_d & C
\end{pmatrix}^{T}.
\label{ukf_physical_parameter_vector}
\end{equation}
Here, proximal resistance $R_p$ and distal resistance $R_d$ are expressed in
$\mathrm{Pa\,s\,m^{-3}}$, while compliance $C$ is expressed in
$\mathrm{m^3\,Pa^{-1}}$. Throughout the implementation, the logarithms
below are applied to the numerical values of these parameters expressed in
these fixed units. Because the $R_p$, $R_d$, and $C$ must remain strictly positive, the filter is reformulated in logarithmic
coordinates:
\begin{equation}
\boldsymbol\vartheta
=
\log\boldsymbol\theta
=
\begin{pmatrix}
\log R_p & \log R_d & \log C
\end{pmatrix}^{T},
\qquad
\boldsymbol\theta
=
\exp\boldsymbol\vartheta,
\label{ukf_log_transform}
\end{equation}
where the logarithm and exponential are applied componentwisely to ensure every value of
$\boldsymbol\vartheta\in\mathbb{R}^3$ produces strictly positive values of
$R_p$, $R_d$, and $C$.

Let
$
\widehat{\boldsymbol\vartheta}^{\,n}
\in\mathbb{R}^3,
~
\mathbf P_\vartheta^n
\in\mathbb{R}^{3\times3}$
denote the filtered mean of the logarithmic parameter vector and its
covariance at time $t^n$, respectively. No process-noise covariance is included in the parameter-evolution model.
Therefore, between two consecutive pressure measurements, the prior
parameter mean and covariance are taken directly from the preceding
posterior values:
\[
\widehat{\boldsymbol\vartheta}^{\,n+1,-}
=
\widehat{\boldsymbol\vartheta}^{\,n},
\qquad
\mathbf P_\vartheta^{n+1,-}
=
\mathbf P_\vartheta^n.
\]
This assumption treats $R_p$, $R_d$, and $C$ as unknown but constant in time. At each time step, the filter
compares the predicted inlet pressure with the measured inlet pressure and
uses their difference to update the estimates of $R_p$, $R_d$, and $C$.

\paragraph{Pressure observation}

Because the inlet velocity is prescribed strongly, we choose the inlet pressure as the output. In particular, the scalar model observation is defined as the averaged inlet pressure:
\begin{equation}
\mathcal H_{n+1}(\boldsymbol\theta)
=
\frac{1}
{\left|\Gamma_f^{\mathrm{in},n+1}\right|}
\int_{\Gamma_f^{\mathrm{in},n+1}}
p^{n+1}(\boldsymbol x;\boldsymbol\theta)
\,dS.
\label{ukf_observation_operator}
\end{equation}
Here,
$p^{n+1}(\cdot;\boldsymbol\theta)$ is obtained by solving the
fluid-Windkessel subproblem with the candidate parameter vector
$\boldsymbol\theta$.

The measured inlet pressure is modeled as
\begin{equation}
z_{\mathrm{obs}}^{n+1}
=
\mathcal H_{n+1}(\boldsymbol\theta)
+
\nu^{n+1},
\qquad
\mathbb E[\nu^{n+1}]
=
0,
\qquad
\mathbb E\left[(\nu^{n+1})^2\right]
=
R_{\mathrm{obs}},
\label{ukf_measurement_model}
\end{equation}
where $R_{\mathrm{obs}}>0$ is the prescribed measurement-noise variance.
In the present implementation, we choose
$R_{\mathrm{obs}}
=
(50\ \mathrm{Pa})^2,
\label{ukf_measurement_variance}$
and 
$z_{\mathrm{obs}}^{n+1}
=
1733.19\ \mathrm{Pa}$. For patient-specific calculations, this
constant can is replaced by the measured inlet-pressure waveform evaluated at
$t^{n+1}$.

\paragraph{Generation of sigma points}

The logarithmic parameter state has dimension
$N_\theta=3$.
Define
\begin{equation}
\lambda_{\mathrm{UKF}}
=
\alpha_{\mathrm{UKF}}^2
\left(
3+\kappa_{\mathrm{UKF}}
\right)
-
3.
\label{ukf_lambda}
\end{equation}
In our implementation, we choose
\begin{equation}
\alpha_{\mathrm{UKF}}
=
10^{-1},
\qquad
\beta_{\mathrm{UKF}}
=
2,
\qquad
\kappa_{\mathrm{UKF}}
=
0.
\label{ukf_scaling_parameters}
\end{equation}
We note that the parameter $\beta_{\mathrm{UKF}}$ is an unscented-transform parameter and
is unrelated to the fluid--structure splitting parameter $\beta$.

Let $\mathbf L^{n+1}$ be the lower-triangular Cholesky factor satisfying
\begin{equation}
\mathbf L^{n+1}
\left(
\mathbf L^{n+1}
\right)^T
=
\left(
3+\lambda_{\mathrm{UKF}}
\right)
\left(
\mathbf P_\vartheta^{n+1,-}
+
\epsilon_{\mathrm{chol}}\mathbf I_3
\right),
\label{ukf_cholesky_factor}
\end{equation}
where
$\epsilon_{\mathrm{chol}}
=
10^{-12}$
is a small numerical regularization parameter.
Because $N_\theta=3$, the unscented transform uses $2N_\theta+1=7$ sigma points, indexed by
$i=0,\ldots,6$
\begin{equation}
\begin{cases}
\begin{aligned}
\boldsymbol\chi_0^{n+1}
&=
\widehat{\boldsymbol\vartheta}^{\,n+1,-},
\\
\boldsymbol\chi_i^{n+1}
&=
\widehat{\boldsymbol\vartheta}^{\,n+1,-}
+
\mathbf L_{:,i}^{n+1},
\qquad
i=1,2,3,
\\
\boldsymbol\chi_{i+3}^{n+1}
&=
\widehat{\boldsymbol\vartheta}^{\,n+1,-}
-
\mathbf L_{:,i}^{n+1},
\qquad
i=1,2,3,
\end{aligned}
\end{cases}
\label{ukf_sigma_points}
\end{equation}
where $\mathbf L_{:,i}^{n+1}$ denotes the $i$th column of
$\mathbf L^{n+1}$. 

Therefore, the corresponding mean weights are
\begin{equation}
W_0^{(m)}
=
\frac{\lambda_{\mathrm{UKF}}}
{3+\lambda_{\mathrm{UKF}}},
\qquad
W_i^{(m)}
=
\frac{1}
{2\left(3+\lambda_{\mathrm{UKF}}\right)},
\quad
i=1,\ldots,6.
\label{ukf_mean_weights}
\end{equation}
The covariance weights are
\begin{equation}
\begin{cases}
\begin{aligned}
W_0^{(c)}
&=
\frac{\lambda_{\mathrm{UKF}}}
{3+\lambda_{\mathrm{UKF}}}
+
1-\alpha_{\mathrm{UKF}}^2
+
\beta_{\mathrm{UKF}},
\\
W_i^{(c)}
&=
\frac{1}
{2\left(3+\lambda_{\mathrm{UKF}}\right)},
\qquad
i=1,\ldots,6.
\end{aligned}
\end{cases}
\label{ukf_covariance_weights}
\end{equation}

\paragraph{Propagation of the sigma points}

At time $t^{n+1}$, the structure \eqref{structure_beta_subproblem} and ALE
subproblems are first computed. During the parameter-estimation step, the
resulting shell displacement, structure-stage velocity, fluid mesh, mesh
velocity, previous fluid state, and previous capacitor pressure $P_B^n$ are
held fixed.

For each sigma point $\boldsymbol\chi_i^{n+1}$, the corresponding positive
physical parameter vector is
\begin{equation}
\boldsymbol\theta_i^{n+1}
=
\exp
\left(
\boldsymbol\chi_i^{n+1}
\right)
=
\begin{pmatrix}
R_{p,i}^{n+1}
&
R_{d,i}^{n+1}
&
C_i^{n+1}
\end{pmatrix}^{T},
\qquad
i=0,\ldots,6.
\label{ukf_physical_sigma_point}
\end{equation}
And for each parameter vector $\boldsymbol\theta_i^{n+1}$, one candidate
fluid-Windkessel problem \eqref{fluid_beta_subproblem} is computed. 
Let $p_i^{n+1}$
denote the pressure obtained from the the $i_{th}$ sigma point, the
predicted observation associated with the $i_{th}$ sigma point is
\begin{equation}
z_i^{n+1}
=
\frac{1}
{\left|\Gamma_f^{\mathrm{in},n+1}\right|}
\int_{\Gamma_f^{\mathrm{in},n+1}}
p_i^{n+1}
\,dS,
\qquad
i=0,\ldots,6.
\label{ukf_sigma_observations}
\end{equation}

\paragraph{Unscented Kalman update}

Then, the predicted mean observation can be computed
\begin{equation}
\overline z^{n+1}
=
\sum_{i=0}^{6}
W_i^{(m)}z_i^{n+1},
\label{ukf_predicted_observation}
\end{equation}
with the corresponding innovation variance
\begin{equation}
S_z^{n+1}
=
\sum_{i=0}^{6}
W_i^{(c)}
\left(
z_i^{n+1}
-
\overline z^{n+1}
\right)^2
+
R_{\mathrm{obs}}.
\label{ukf_innovation_variance}
\end{equation}
We can also compute the parameter-observation cross covariance
\begin{equation}
\mathbf C_{\vartheta z}^{n+1}
=
\sum_{i=0}^{6}
W_i^{(c)}
\left(
\boldsymbol\chi_i^{n+1}
-
\widehat{\boldsymbol\vartheta}^{\,n+1,-}
\right)
\left(
z_i^{n+1}
-
\overline z^{n+1}
\right).
\label{ukf_cross_covariance}
\end{equation}
Because the observation is scalar,
\[
\mathbf C_{\vartheta z}^{n+1}
=
\begin{pmatrix}
\operatorname{Cov}(\log R_p,z)\\
\operatorname{Cov}(\log R_d,z)\\
\operatorname{Cov}(\log C,z)
\end{pmatrix}
\in\mathbb{R}^3.
\qquad
S_z^{n+1}
\in\mathbb{R}.
\]
The Kalman gain is therefore
\begin{equation}
\mathbf K^{n+1}
=
\mathbf C_{\vartheta z}^{n+1}
\left(
S_z^{n+1}
\right)^{-1}
\in\mathbb{R}^3.
\label{ukf_gain}
\end{equation}

The posterior logarithmic parameter mean is
\begin{equation}
\widehat{\boldsymbol\vartheta}^{\,n+1}
=
\widehat{\boldsymbol\vartheta}^{\,n+1,-}
+
\mathbf K^{n+1}
\left(
z_{\mathrm{obs}}^{n+1}
-
\overline z^{n+1}
\right).
\label{ukf_parameter_update}
\end{equation}
Its covariance is updated according to
\begin{equation}
\mathbf P_\vartheta^{n+1}
=
\mathbf P_\vartheta^{n+1,-}
-
\mathbf K^{n+1}
S_z^{n+1}
\left(
\mathbf K^{n+1}
\right)^T.
\label{ukf_covariance_update}
\end{equation}
To reduce the effect of roundoff error, the computed covariance is
symmetrized and regularized:
\begin{equation}
\mathbf P_\vartheta^{n+1}
\leftarrow
\frac{1}{2}
\left[
\mathbf P_\vartheta^{n+1}
+
\left(
\mathbf P_\vartheta^{n+1}
\right)^T
\right]
+
10^{-14}\mathbf I_3.
\label{ukf_covariance_regularization}
\end{equation}

The estimated physical Windkessel parameters are recovered through
\begin{equation}
\widehat{\boldsymbol\theta}^{\,n+1}
=
\exp
\left(
\widehat{\boldsymbol\vartheta}^{\,n+1}
\right)
=
\begin{pmatrix}
\widehat R_p^{\,n+1}
&
\widehat R_d^{\,n+1}
&
\widehat C^{\,n+1}
\end{pmatrix}^{T}.
\label{ukf_updated_physical_parameters}
\end{equation}

\paragraph{Accepted fluid-structure variable states update}

After the measurement update, the candidate sigma-point solutions are
discarded. The fluid--Windkessel subproblem is solved once more using the
posterior parameter estimate
$\widehat{\boldsymbol\theta}^{\,n+1}$. This final solve produces the
accepted fluid velocity $\boldsymbol u^{n+1}$, pressure $p^{n+1}$, outlet flow rate $Q^{n+1}$, outlet pressure $p_{\mathrm{out}}^{n+1}$, and
capacitor pressure $P_B^{n+1}$, and then
these accepted variables are carried into the next fluid-structure interaction time step.

\paragraph{Initialization}
Given an initial physical parameter estimate
$\boldsymbol\theta^0=(R_p^0, R_d^0, C^0)^T$, the initial logarithmic mean is
\begin{equation}
\widehat{\boldsymbol\vartheta}^{\,0}
=
\log\boldsymbol\theta^0 
=
\begin{pmatrix}
\log R_p^0\\
\log R_d^0\\
\log C^0
\end{pmatrix}.
\label{ukf_initial_logarithmic_mean}
\end{equation}
The initial logarithmic parameter covariance is
\begin{equation}
\mathbf P_\vartheta^0
=
\left[\log(1+0.20)\right]^2\mathbf I_3.
\label{ukf_initial_covariance}
\end{equation}
Thus, the three logarithmic parameters are initially assigned equal
variances and zero cross covariances. The logarithmic standard deviation
$\log(1.20)$ corresponds to a one-standard-deviation multiplicative range
from $1/1.20$ to $1.20$ around each initial physical parameter estimate.

\section{Numerical results}
\label{numerical_result}

\subsection{Straight-tube pressure-pulse benchmark}
\label{sec:straight_pressure_wave}

We first consider a straight compliant tube subjected to a short inlet-pressure
pulse. This test is based on the pressure-wave propagation benchmark used by
Buka\v{c} et al.~\cite{canic2015stability, bukac2013fluid, bukac2015partitioned}. The purpose of
this example is to verify the implementation of the coupled fluid-Naghdi-shell
model, including the FSI solver based on
the kinematically coupled $\beta$-splitting scheme.
We notice that the original benchmark couples a 2D fluid with a 1D Koiter shell wall model, while in the
present work, we consider the fluid occupying a 3D cylindrical domain and
the wall is modeled by the 2D Naghdi shell introduced in Section~\ref{NaghdiModel}. Therefore, the present test should be understood as
a 3D extension of the pressure-pulse benchmark rather than an exact reproduction. No Windkessel model or parameter estimator is considered in this example. 

Let $L>0$ and $R_0>0$ denote the length and reference midsurface radius of
the straight tube. The reference fluid domain is
\begin{equation}
\hat\Omega_f
=
\left\{
(\hat x_1,\hat x_2,\hat x_3)\in\mathbb R^3:
0<\hat x_1<L,
\quad
\hat x_2^2+\hat x_3^2<R_0^2
\right\}.
\label{benchmark_reference_fluid_domain}
\end{equation}
The fixed shell parameter domain is
$\omg
=
(0,L)\times(0,2\pi R_0)$,
where $x\in(0,L)$ is the axial coordinate and
$y_{\mathrm{arc}}\in(0,2\pi R_0)$ is the circumferential arc-length
coordinate. Therefore, the reference shell midsurface is parameterized by

\begin{equation}
\boldsymbol\varphi(x,y_{\mathrm{arc}})
=
\begin{pmatrix}
x,
R_0\cos\theta,
R_0\sin\theta
\end{pmatrix}.
\label{benchmark_reference_midsurface}
\end{equation}
The corresponding covariant tangent vectors are
\[
\boldsymbol a_1
=
\partial_1\boldsymbol\varphi
=
\begin{pmatrix}
1, 0, 0
\end{pmatrix},
\qquad
\boldsymbol a_2
=
\partial_2\boldsymbol\varphi
=
\begin{pmatrix}
0, -\sin\theta, \cos\theta
\end{pmatrix}.
\]
Consequently,
\[
\boldsymbol a_c
=
\begin{pmatrix}
1&0\\
0&1
\end{pmatrix},
\qquad
\sqrt a
=
\left|
\boldsymbol a_1\times\boldsymbol a_2
\right|
=
1.
\]

In this benchmark, the fluid is driven by a prescribed inlet pressure, with the corresponding fluid boundary conditions on the inlet and outlet as follows
\begin{align}
\begin{cases}
\boldsymbol\sigma_f(\boldsymbol u,p)\boldsymbol n_f
&=
-p_{\mathrm{in}}(t)\boldsymbol n_f
\qquad
\text{on }\Gamma_f^{\mathrm{in}}(t),\\
\boldsymbol\sigma_f(\boldsymbol u,p)\boldsymbol n_f
&=
-p_{\mathrm{out}}(t)\boldsymbol n_f
=
\boldsymbol0
\qquad
\text{on }\Gamma_f^{\mathrm{out}}(t).
\end{cases}
\label{benchmark_inout_pressure_condition}    
\end{align}
The inlet-pressure pulse is given by
\begin{equation}
p_{\mathrm{in}}(t)
=
\begin{cases}
\dfrac{p_{\max}}{2}
\left[
1-
\cos\left(
\dfrac{2\pi t}{t_{\max}}
\right)
\right],
&
0\leq t\leq t_{\max},
\\[2ex]
0,
&
t>t_{\max},
\end{cases}
\label{benchmark_inlet_pressure_pulse}
\end{equation}
where
$p_{\max}
=
2.0\times10^3\ {\rm Pa},
~
t_{\max}
=
5.0\times10^{-3}\ {\rm s}$.

The Naghdi shell variables are periodic in the circumferential direction and the two ends are clamped
\begin{equation}
\begin{aligned}
&\boldsymbol\eta(x,0,t)
=
\boldsymbol\eta(x,2\pi R_0,t),
&&
\omb(x,0,t)
=
\omb(x,2\pi R_0,t).\\
&\boldsymbol\eta(0,y_{\mathrm{arc}},t)
=
\boldsymbol0,
&&
\omb(0,y_{\mathrm{arc}},t)
=
\boldsymbol0,
\\
&\boldsymbol\eta(L,y_{\mathrm{arc}},t)
=
\boldsymbol0,
&&
\omb(L,y_{\mathrm{arc}},t)
=
\boldsymbol0.
\end{aligned}
\label{benchmark_shell_clamped_conditions}
\end{equation}

We remark that the original benchmark in \cite{canic2015stability, bukac2013fluid, bukac2015partitioned} employs absorbing conditions for the radial
wall displacement. Because those conditions were derived for a scalar
radial string model, they are not imposed directly on the vector-valued
Naghdi displacement and rotation fields. The clamped conditions may produce
localized boundary layers near the inlet and outlet. Therefore, comparisons
with the published benchmark are primarily made in the interior of the tube.

Regarding the initial condition, the fluid and shell are initially set at rest
\begin{equation}
\boldsymbol u(\cdot,0)
=
\boldsymbol0,~
p(\cdot,0)
=
0,
~\boldsymbol\eta(\cdot,0)
=
\boldsymbol0,
~\partial_t\boldsymbol\eta(\cdot,0)
=
\boldsymbol0,
~\omb(\cdot,0)
=
\boldsymbol0,
~\partial_t\omb(\cdot,0)
=
\boldsymbol0.
\label{benchmark_initial_conditions}
\end{equation}

The geometric, fluid, and shell parameters used in the benchmark are listed
in Table~\ref{tab:pressure_wave_parameters}. The original values are
converted from the centimeter-gram-second system to SI units.
\begin{table}[htbp]
\centering
\caption{Parameters for the straight-tube pressure-pulse benchmark.}
\label{tab:pressure_wave_parameters}
\renewcommand{\arraystretch}{1.2}
\setlength{\tabcolsep}{5pt}
\begin{tabular}{lll@{\hspace{0.5em}}lll}
\hline
Parameter name & Symbol & Value (SI units)
&
Parameter name & Symbol & Value (SI units)
\\
\hline
Tube length
&
$L$
&
$6.0\times10^{-2}\ \mathrm{m}$
&
Reference radius
&
$R_0$
&
$5.0\times10^{-3}\ \mathrm{m}$
\\
Shell thickness
&
$\varepsilon$
&
$1.0\times10^{-3}\ \mathrm{m}$
&
Fluid density
&
$\rho_f$
&
$1.0\times10^{3}\ \mathrm{kg\,m^{-3}}$
\\

Fluid dynamic viscosity
&
$\mu_f$
&
$3.5\times10^{-3}\ \mathrm{Pa\,s}$
&
Shell density
&
$\rho_s$
&
$1.1\times10^{3}\ \mathrm{kg\,m^{-3}}$
\\
Young's modulus
&
$E$
&
$7.5\times10^{4}\ \mathrm{Pa}$
&
Poisson ratio
&
$\nu_s$
&
$0.5$
\\
Maximum inlet pressure
&
$p_{\max}$
&
$2.0\times10^{3}\ \mathrm{Pa}$
&
Pulse duration
&
$t_{\max}$
&
$5.0\times10^{-3}\ \mathrm{s}$
\\
Final simulation time
&
$T$
&
$1.2\times10^{-2}\ \mathrm{s}$
&
Splitting parameter
&
$\beta$
&
$1$
\\
\hline
\end{tabular}
\end{table}
Here, the value $\nu_s=0.5$ should not be substituted into the 3D
Lam\'e coefficient
\[
\lambda_s
=
\frac{E\nu_s}
{(1+\nu_s)(1-2\nu_s)},
\]
because this expression is singular at $\nu_s=0.5$. For our thin shell, since the normal stress through the shell thickness is assumed to vanish, by eliminating the thickness strain from the 3D constitutive law will produce the following
plane-stress coefficients:
\begin{equation}
\mu_s
=
\frac{E}{2(1+\nu_s)},
\qquad
\lambda_s^{\mathrm{ps}}
=
\frac{E\nu_s}{1-\nu_s^2}.
\label{benchmark_plane_stress_coefficients}
\end{equation}
Based on the parameter values given in
Table~\ref{tab:pressure_wave_parameters}, these coefficients are
\[
\mu_s
=
2.5\times10^4\ {\rm Pa},
\qquad
\lambda_s^{\mathrm{ps}}
=
5.0\times10^4\ {\rm Pa}.
\]

We notice that the original benchmark problem also contains a structural damping term, but our Naghdi shell model considered does not include this additional damping. Therefore, exact point-wise agreement with the published viscoelastic-wall solution is not expected. Instead, quantitative discretization errors are evaluated relative to a numerically converged solution of the same 3D
fluid-Naghdi-shell problem. In particular, the coupled problem is advanced using the splitting scheme introduced in
Section~\ref{sec:betascheme}, with $\beta=1$. 
For the spatial convergence study, three successive mesh grids were used. Here, $n_x$ and $n_\theta$ denote the numbers of
axial and circumferential shell element respectively, and $\ell_c$
denotes the target interior fluid mesh spacing. The coarse mesh, generated
with
\[
(n_x,n_\theta,\ell_c)
=
(24,16,2.0\times10^{-3}\ {\rm m}),
\]
contained $850$ fluid vertices, $3,600$ tetrahedrons, and $768$ shell triangles.
The medium mesh, generated with
\[
(n_x,n_\theta,\ell_c)
=
(40,24,1.0\times10^{-3}\ {\rm m}),
\]
contained $3,690$ fluid vertices, $18,480$ tetrahedrons, and $1,920$ shell triangles. The fine mesh, generated with
\[
(n_x,n_\theta,\ell_c)
=
(60,32,5.0\times10^{-4}\ {\rm m}),
\]
contained $19,520$ fluid vertices, $109,080$ tetrahedrons, and $3,840$ shell triangles. The corresponding polygonal-cylinder volumes were
\[
4.59\times10^{-6},\quad
4.66\times10^{-6},\quad
4.68\times10^{-6}\ {\rm m}^3,
\]
respectively, approaching the exact cylindrical volume
$\pi R_0^2L
=
4.712389\times10^{-6}\ {\rm m}^3$.
At every refinement level, the fluid and shell meshes coincided on the fluid structure interface. 
For the temporal convergence study, a sequence of successively refined time
steps may be used, 
$\Delta t
=
\left\{
1.0\times10^{-4},
\;
5.0\times10^{-5},
\;
2.5\times10^{-5}
\right\}
\ {\rm s}$.
\begin{figure}[htbp]
    \centering
    \begin{subfigure}[t]{0.8\textwidth}
        \centering
     \includegraphics[width=\linewidth]{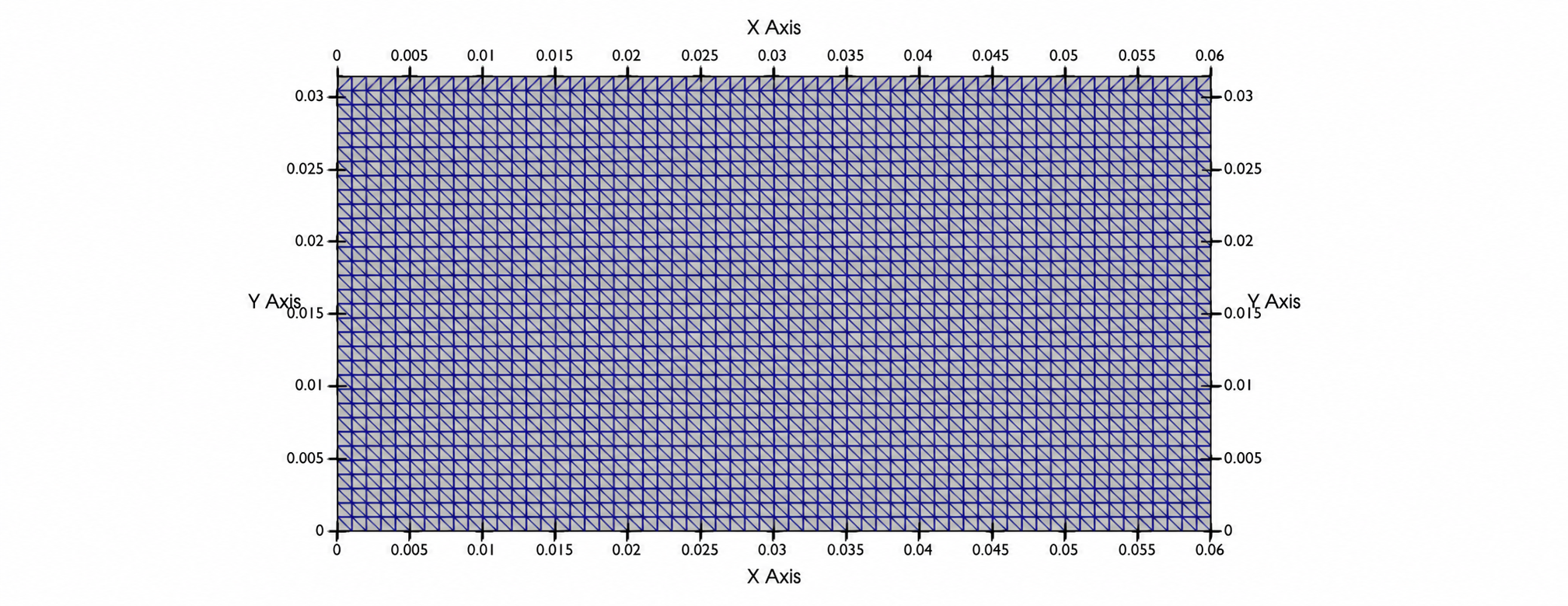}
        \caption{Shell mesh}
        \label{fig:shell_mesh}
    \end{subfigure}
    \hfill
    \begin{subfigure}[t]{0.75\textwidth}
        \centering
        \includegraphics[width=\linewidth]{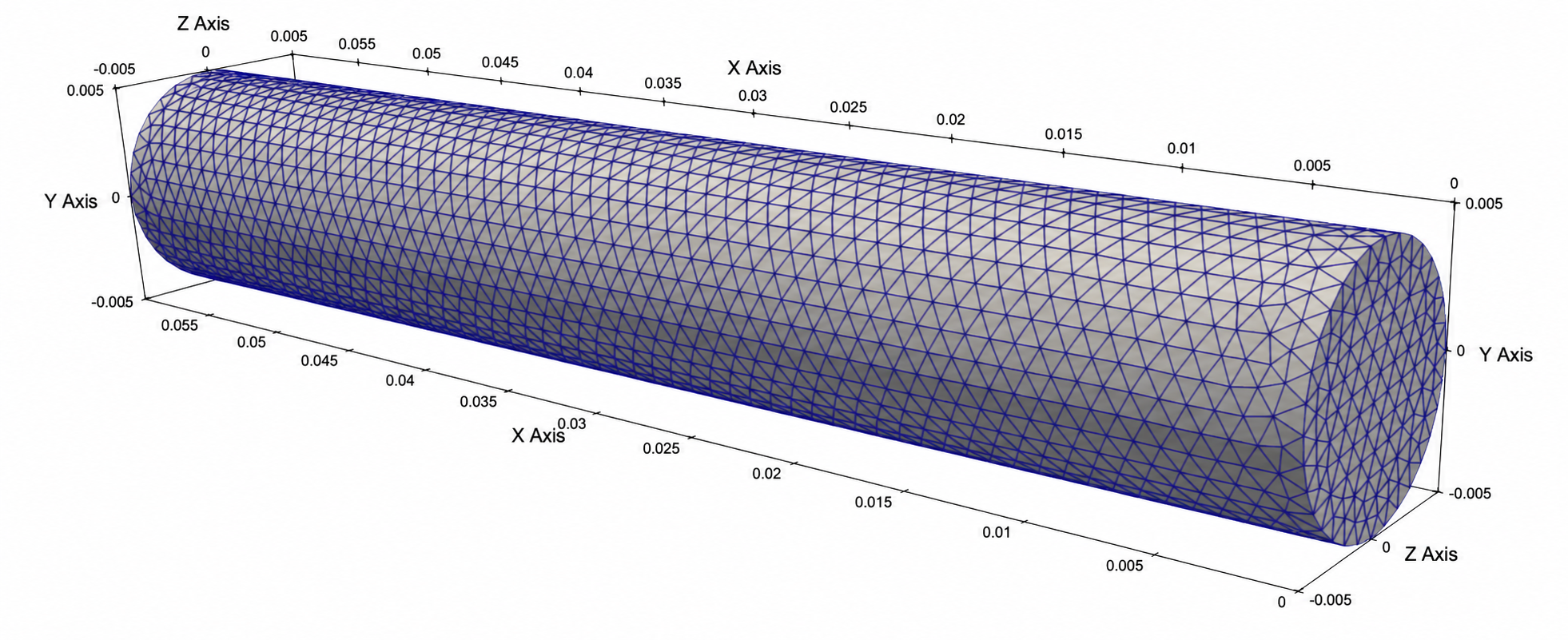}
        \caption{Fluid mesh}
     \label{fig:fluid_mesh}
    \end{subfigure}
\caption{Computational meshes used for the pressure-wave benchmark. The shell mesh consists of 3,840 triangular elements, while the fluid mesh consists of 109,080 tetrahedral elements.}
\label{fig:ex1_shell_fluid_meshes}
\end{figure}

We report the cross-section $S(\boldsymbol{x},t)$ averaged pressure which is defined as below
\begin{equation}
\overline p(x,t)
=
\frac{1}
{\left|\mathcal S(\boldsymbol{x},t)\right|}
\int_{\mathcal S(\boldsymbol{x},t)}
p(\boldsymbol{x},t)
\,dA.
\label{benchmark_mean_pressure}
\end{equation}
We also report the mean diameter based on the following formula. We partition the shell parameter domain $\omg$ into $N$ nonoverlapping vertical strips, denoted by $\omg_i=\langle x_{i-1},x_i\rangle \times \langle 0, 2\pi R_0 \rangle$, $i=1,\ldots,N$. For each strip $\omg_i$, the mean radial displacement is calculated as
\begin{equation}
\overline{\eta}_{r,i}(t)
=
\frac{
\displaystyle
\int_{\omg_i}
\boldsymbol\eta(x,y_{\mathrm{arc}},t)
\cdot
\boldsymbol e_r(y_{\mathrm{arc}})
\,dx\,dy_{\mathrm{arc}}
}{
\displaystyle
\int_{\omg_i}
1
\,dx\,dy_{\mathrm{arc}}
},
\label{mean_radial_displacement}
\end{equation}
where the outward radial unit vector  $\boldsymbol e_r(y_{\mathrm{arc}})$ of the reference cylinder is
\[
\boldsymbol e_r(y_{\mathrm{arc}})
=
\begin{pmatrix}
0, 
\cos\left(y_{\mathrm{arc}}/R_0\right), 
\sin\left(y_{\mathrm{arc}}/R_0\right).
\end{pmatrix}
\]
Then, the corresponding mean diameter is then defined by
\begin{equation}
\overline D_i(t)
=
2
\left(
R_0+\overline{\eta}_{r,i}(t)
\right).
\label{mean_diameter}
\end{equation}
We studied the temporal convergence and spatial convergence by reporting the averaged pressure profiles of $\overline p(x,t)$ and the mean diameter $\overline D_i(t)$ at
$t=2, 4, 6, 8, 10, 12~ {\rm ms}$. The results are presented in Figures~\ref{fig:spatial_axial_pressure_convergence}, \ref{fig:spatial_axial_diameter_convergence}, \ref{fig:temporal_axial_pressure_convergence}, and \ref{fig:temporal_axial_diameter_convergence} respectively. Both temporal and spatial results provide an evidence of convergence.
\begin{figure}[!htbp]
    \centering
    \includegraphics[width=0.99\textwidth]
{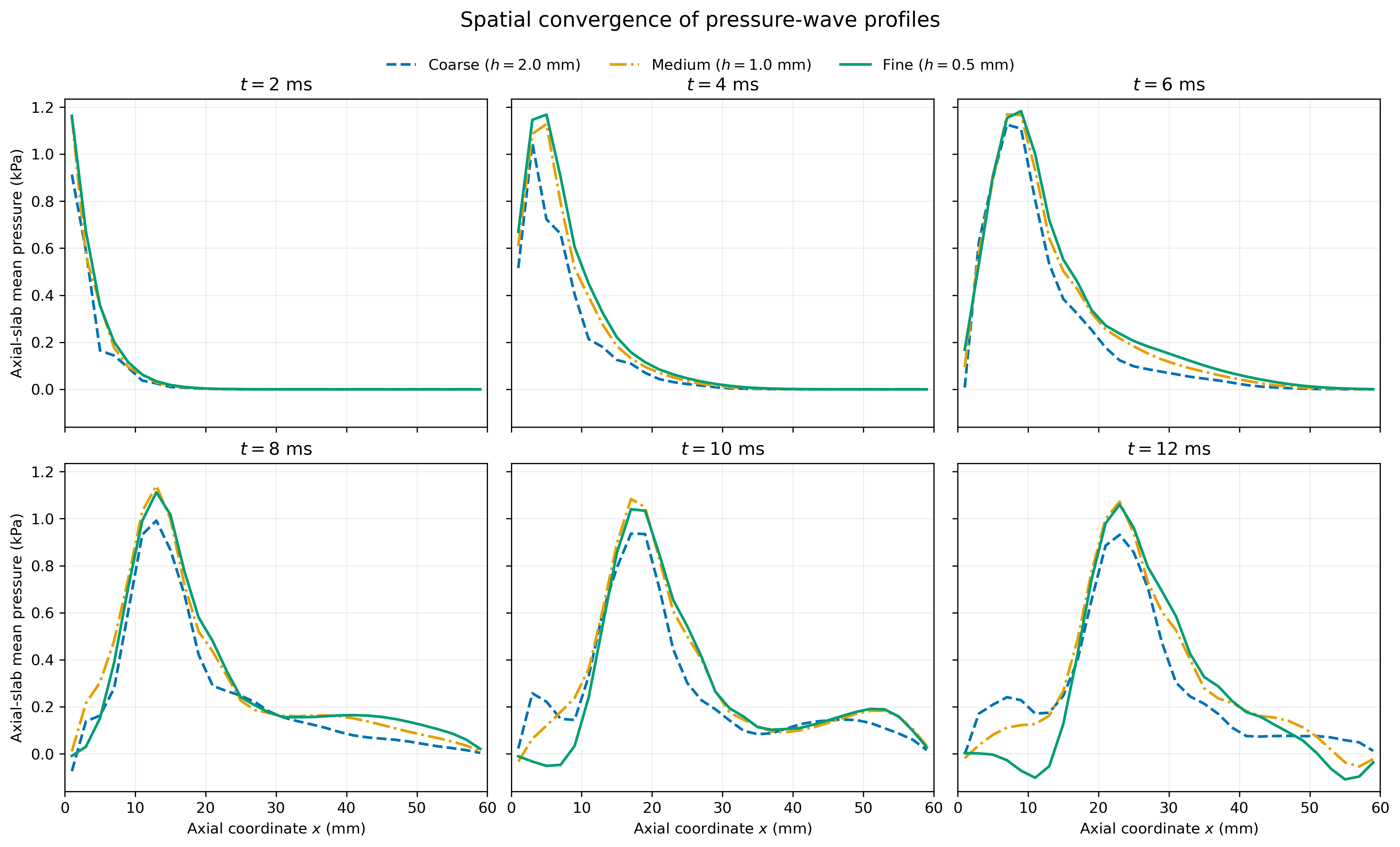}
    \caption{Spatial convergence comparison of the axial averaged pressure
    profiles at $t=2$, $4$, $6$, $8$, $10$, and $12~\mathrm{ms}$ for the original Naghdi shell model.
    The curves correspond to the coarse, medium, and fine spatial
    discretizations, while the time-step size is held fixed with $\Delta t= 2.5\times10^{-5}$.}
    \label{fig:spatial_axial_pressure_convergence}
\end{figure}

\begin{figure}[!htbp]
    \centering
    \includegraphics[width=0.99\textwidth]
{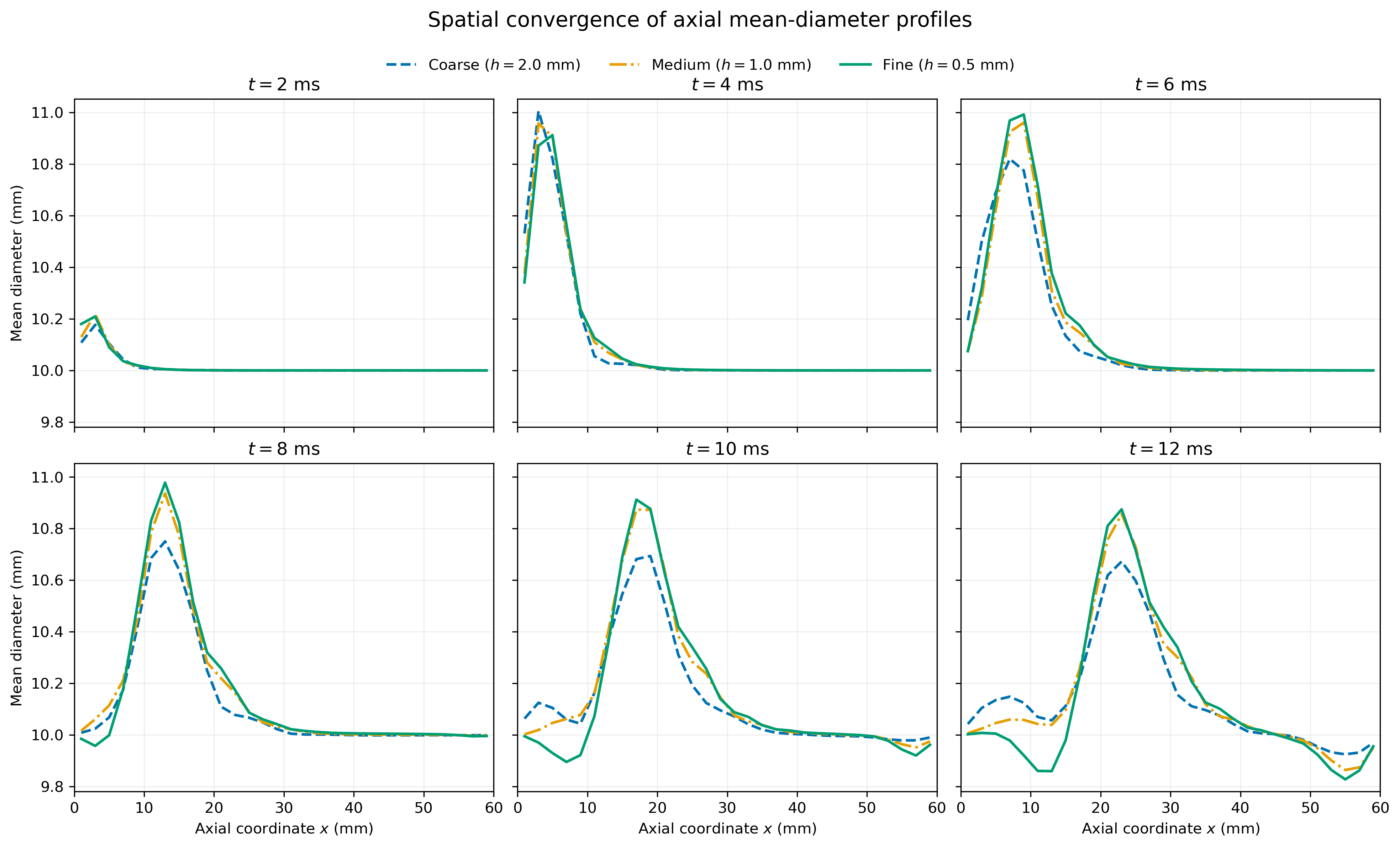}
    \caption{Spatial convergence comparison of the axial mean diameter
    profiles at $t=2$, $4$, $6$, $8$, $10$, and $12~\mathrm{ms}$ for the original Naghdi shell model.
    The curves correspond to the coarse, medium, and fine spatial
    discretizations, while the time-step size is held fixed $\Delta t= 2.5\times10^{-5}$.}
\label{fig:spatial_axial_diameter_convergence}
\end{figure}

\begin{figure}[!htbp]
    \centering
    \includegraphics[width=0.99\textwidth]
{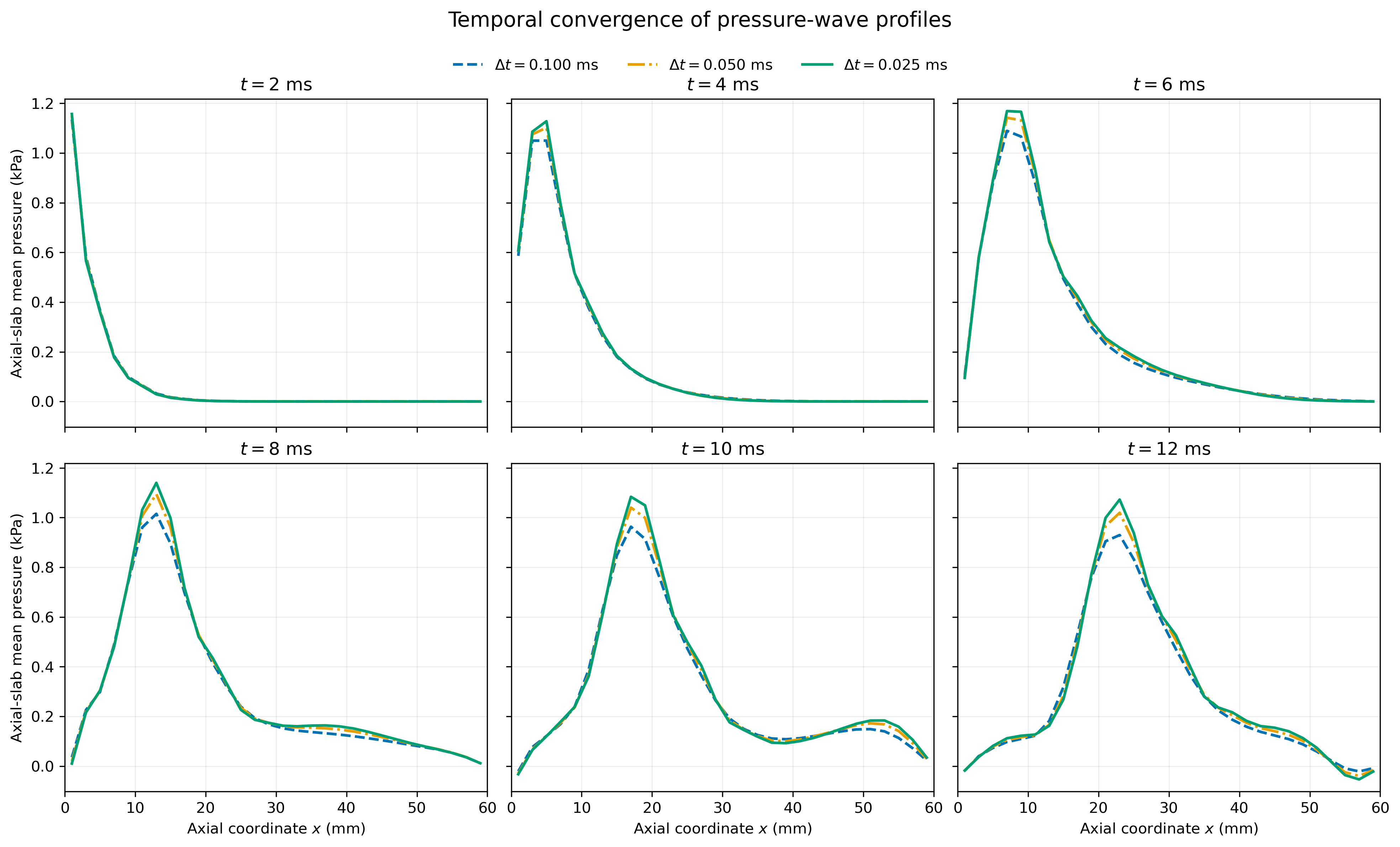}
    \caption{Temporal convergence comparison of the axial averaged pressure
    profiles at $t=2$, $4$, $6$, $8$, $10$, and $12~\mathrm{ms}$ for the original Naghdi shell model.
    The curves correspond to successively refined time-step sizes,
    while the spatial discretization is fixed at the fine-mesh level.}
    \label{fig:temporal_axial_pressure_convergence}
\end{figure}

\begin{figure}[!htbp]
    \centering
    \includegraphics[width=0.99\textwidth]
{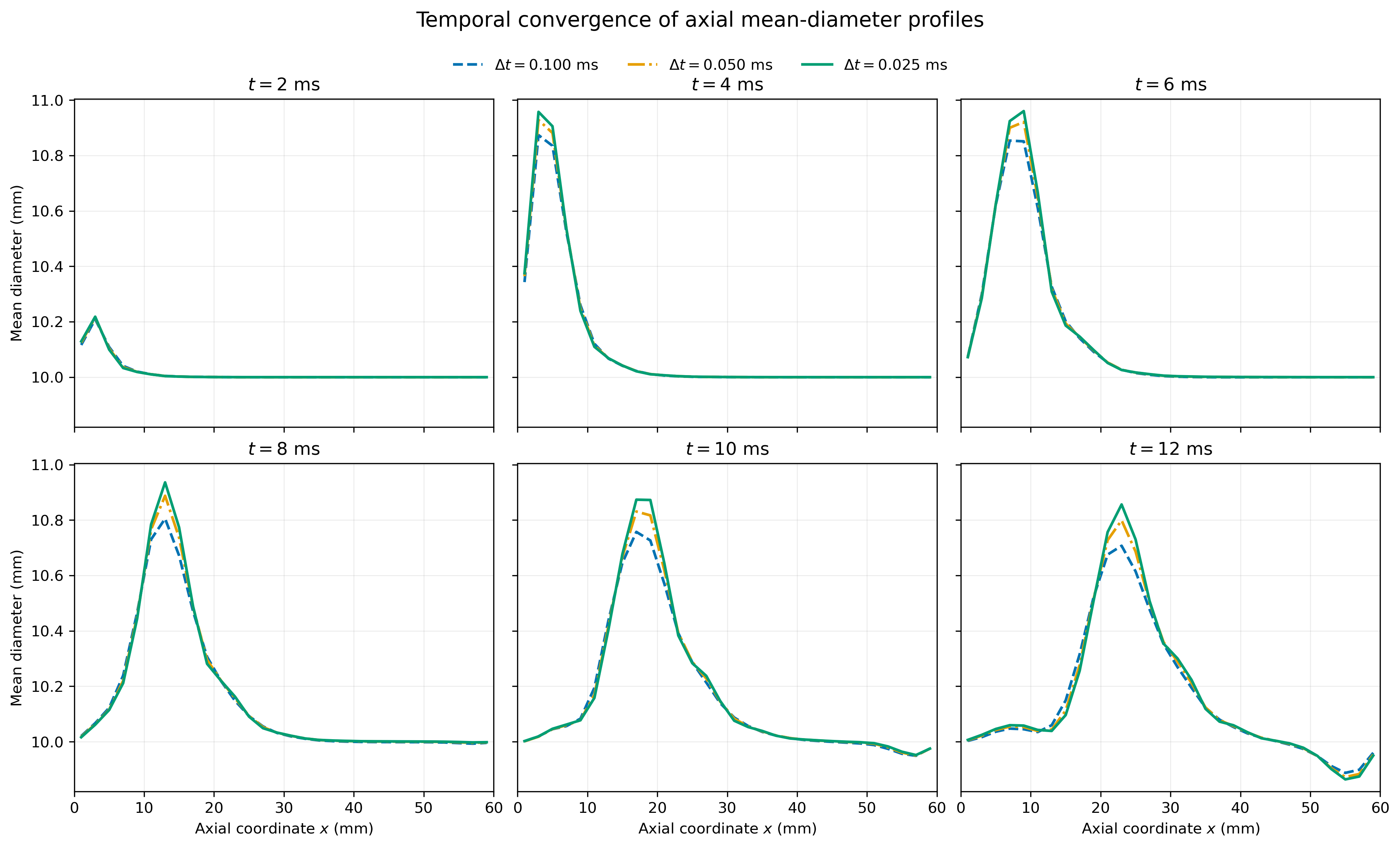}
    \caption{Temporal convergence comparison of the axial mean diameter
    profiles at $t=2$, $4$, $6$, $8$, $10$, and $12~\mathrm{ms}$ for the original Naghdi shell model.
    The curves correspond to successively refined time-step sizes,
    while the spatial discretization is fixed at the fine-mesh level.}
    \label{fig:temporal_axial_diameter_convergence}
\end{figure}

\subsection{Comparison with Koiter shell with radial displacement only}
\label{sec:straight_pressure_wave_penalized}
For this comparison, we modified our Naghdi shell constitutive equation and included a few penalty terms. 
{\color{black}Specifically, to suppress the shear (transverse and drilling) and axial motion, we introduce three penalty parameters} 
\[
\gamma_{\mathrm{sh}}\geq 0,
\qquad
\gamma_x\geq 0,
\qquad
\gamma_{\mathrm{sym}}\geq 0
\]
denote the transverse-shear, bending, axial-displacement, and tangential
symmetry parameters, respectively.

\paragraph{Scaling of the transverse-shear term}

The transverse-shear contribution
involving
\[
\mathcal B_m=\mu_s\boldsymbol a^c
\]
is modified. Specifically, the original constitutive contraction $\mathcal C_m
\widehat{\mathbf E}_{\mathrm{mem}}
(\boldsymbol\eta,\omb)
\cdot
\widehat{\mathbf D}_{\mathrm{mem}}
(\boldsymbol v,\boldsymbol w)$ defined in \eqref{membrane_tensor}
is modified by
\begin{equation}
\begin{aligned}
\mathcal C_m^{(\gamma_{\mathrm{sh}})}
\widehat{\mathbf E}_{\mathrm{mem}}
(\boldsymbol\eta,\omb)
\cdot
\widehat{\mathbf D}_{\mathrm{mem}}
(\boldsymbol v,\boldsymbol w)
={}&
\frac{2\lambda_s\mu_s}
{\lambda_s+2\mu_s}
\operatorname{tr}
\left(
\mathbf A(\boldsymbol\eta,\omb)
\right)
\operatorname{tr}
\left(
\mathbf D(\boldsymbol v,\boldsymbol w)
\right)
\\
&+
2\mu_s
\left(
\boldsymbol a_c
\mathbf A(\boldsymbol\eta,\omb)
\boldsymbol a^c
\right)
:
\mathbf D(\boldsymbol v,\boldsymbol w)
+
\gamma_{\mathrm{sh}}\,
\mathcal B_m
\boldsymbol e(\boldsymbol\eta,\omb)
\cdot
\boldsymbol d(\boldsymbol v,\boldsymbol w),
\end{aligned}
\label{scaled_transverse_shear_constitutive_form}
\end{equation}
where, the in-plane membrane contributions remain unchanged. Choosing
{\color{black}$\gamma_{\mathrm{sh}}\gg 1$ reduces the transverse-shear,}
whereas \(\gamma_{\mathrm{sh}}=1\) recovers the original constitutive model.

Let
$
B_m^{(\gamma_{\mathrm{sh}})}
\left(
(\boldsymbol\eta,\omb),
(\boldsymbol v,\boldsymbol w)
\right)
$
denote the membrane--shear bilinear form obtained using
equation~\eqref{scaled_transverse_shear_constitutive_form}.



\paragraph{Axial-displacement penalty}

Let
$\boldsymbol e_x=(1,0,0)^T$
denote the unit vector along the axial direction of the vessel, and define
\[
\eta_x
=
\boldsymbol\eta\cdot\boldsymbol e_x,
\qquad
v_x
=
\boldsymbol v\cdot\boldsymbol e_x.
\]
To suppress axial displacement, we add the symmetric penalty term
\begin{equation}
P_x(\boldsymbol\eta,\boldsymbol v)
=
\gamma_x
\frac{E_s\varepsilon}{R_0^2}
\int_\omg
\eta_x v_x
\sqrt a
\,dx\,dy_{\mathrm{arc}}.
\label{axial_displacement_penalty}
\end{equation}
The factor \(E_s\varepsilon/R_0^2\) supplies the appropriate stiffness
scaling, while \(\gamma_x\) is dimensionless. This term is the first
variation of the nonnegative penalty energy
\[
\mathcal E_x(\boldsymbol\eta)
=
\frac{\gamma_x}{2}
\frac{E_s\varepsilon}{R_0^2}
\int_\omg
\eta_x^2
\sqrt a
\,dx\,dy_{\mathrm{arc}}.
\]
Consequently, increasing \(\gamma_x\) increasingly suppresses
\(\eta_x\), although a finite penalty parameter does not impose
\(\eta_x=0\) exactly.

\paragraph{Penalty on the off-diagonal difference}

To encourage symmetry of the local tangential membrane matrix, we penalize
the difference between its two off-diagonal entries. For the trial fields,
these entries are
\[
\begin{aligned}
A_{12}(\boldsymbol\eta,\omb)
&=
\boldsymbol a^1\cdot\partial_2\boldsymbol\eta
+
\sqrt a\,a^{11}
\omb\cdot\boldsymbol a_3,
\\
A_{21}(\boldsymbol\eta,\omb)
&=
\boldsymbol a^2\cdot\partial_1\boldsymbol\eta
-
\sqrt a\,a^{22}
\omb\cdot\boldsymbol a_3.
\end{aligned}
\]
The corresponding entries for the test fields are
\[
\begin{aligned}
D_{12}(\boldsymbol v,\boldsymbol w)
&=
\boldsymbol a^1\cdot\partial_2\boldsymbol v
+
\sqrt a\,a^{11}
\boldsymbol w\cdot\boldsymbol a_3,
\\
D_{21}(\boldsymbol v,\boldsymbol w)
&=
\boldsymbol a^2\cdot\partial_1\boldsymbol v
-
\sqrt a\,a^{22}
\boldsymbol w\cdot\boldsymbol a_3.
\end{aligned}
\]
Therefore, the added symmetry penalty is
\begin{equation}
\begin{aligned}
P_{\mathrm{sym}}
\left(
(\boldsymbol\eta,\omb),
(\boldsymbol v,\boldsymbol w)
\right)
={}&
\gamma_{\mathrm{sym}}E_s\varepsilon
\int_\omg
\left[
A_{12}(\boldsymbol\eta,\omb)
-
A_{21}(\boldsymbol\eta,\omb)
\right]
\times
\left[
D_{12}(\boldsymbol v,\boldsymbol w)
-
D_{21}(\boldsymbol v,\boldsymbol w)
\right]
\sqrt a
\,dx\,dy_{\mathrm{arc}}.
\end{aligned}
\label{off_diagonal_symmetry_penalty}
\end{equation}
This term is the first variation of
\[
\mathcal E_{\mathrm{sym}}
(\boldsymbol\eta,\omb)
=
\frac{\gamma_{\mathrm{sym}}E_s\varepsilon}{2}
\int_\omg
\left[
A_{12}(\boldsymbol\eta,\omb)
-
A_{21}(\boldsymbol\eta,\omb)
\right]^2
\sqrt a
\,dx\,dy_{\mathrm{arc}}.
\]
It is therefore symmetric and nonnegative. A sufficiently large value of
\(\gamma_{\mathrm{sym}}\) enforces
$A_{12}(\boldsymbol\eta,\omb)
\approx
A_{21}(\boldsymbol\eta,\omb)$, whereas exact equality is obtained only in the limiting case or by imposing
symmetry as a hard constraint.

For the straight, unbanded cylindrical geometry,
$\sqrt a=1,
\quad
a^{11}=a^{22}=1$,
so the off-diagonal difference reduces to
$
A_{12}-A_{21}
=
\boldsymbol a^1\cdot\partial_2\boldsymbol\eta
-
\boldsymbol a^2\cdot\partial_1\boldsymbol\eta
+
2\omb\cdot\boldsymbol a_3$.

\paragraph{Modified shell weak equation}

With these modifications, the shell subproblem becomes: find
\((\boldsymbol\eta,\omb)\in V_N\) such that, for every
\((\boldsymbol v,\boldsymbol w)\in V_N\),
\begin{equation}
\begin{aligned}
&\varepsilon\rho_s
\int_\omg
\partial_{tt}\boldsymbol\eta
\cdot\boldsymbol v
\sqrt a
\,dx\,dy_{\mathrm{arc}}
+
\frac{\varepsilon^3\rho_s}{12}
\int_\omg
\partial_{tt}\omb
\cdot\boldsymbol w
\sqrt a
\,dx\,dy_{\mathrm{arc}}
\\
&\quad
+
B_m^{(\gamma_{\mathrm{sh}})}
\left(
(\boldsymbol\eta,\omb),
(\boldsymbol v,\boldsymbol w)
\right)
+
\gamma_b
B_b
\left(
(\boldsymbol\eta,\omb),
(\boldsymbol v,\boldsymbol w)
\right)
+
P_x(\boldsymbol\eta,\boldsymbol v)
+
P_{\mathrm{sym}}
\left(
(\boldsymbol\eta,\omb),
(\boldsymbol v,\boldsymbol w)
\right)
\\
&=
\int_\omg
\boldsymbol f_s^{\mathrm{ref}}
\cdot\boldsymbol v
\,dx\,dy_{\mathrm{arc}}.
\end{aligned}
\label{modified_shell_weak_equation}
\end{equation}

The original shell formulation is recovered by taking
$\gamma_{\mathrm{sh}}=1,
~\gamma_x=0,
~\gamma_{\mathrm{sym}}=0.$
To suppress transverse shear, 
axial motion and
the antisymmetric tangential component, we choose
$
\gamma_{\mathrm{sh}}=10, 
~\gamma_x=10,
~\gamma_{\mathrm{sym}}=10$.

Similar as we did in Section \ref{sec:straight_pressure_wave}, we report the temporal convergence and spatial convergence studies for the averaged pressure profiles and the mean diameter at
$t=2, 4, 6, 8, 10, 12~ {\rm ms}$. The results are presented in Figures~\ref{fig:penalized_spatial_axial_pressure_convergence}, \ref{fig:penalized_spatial_axial_diameter_convergence}, \ref{fig:penalized_temporal_axial_pressure_convergence}, and \ref{fig:penalized_temporal_axial_diameter_convergence} respectively. Again, both the temporal and spatial results provide evidence of convergence. The pressure and diameter profiles are consistent with those reported in \cite{canic2015stability, bukac2013fluid, bukac2015partitioned}, except for a difference in the wave speed, which is addressed in Remark~\ref{remark7_1}.

\begin{figure}[!htbp]
    \centering
    \includegraphics[width=0.99\textwidth]
{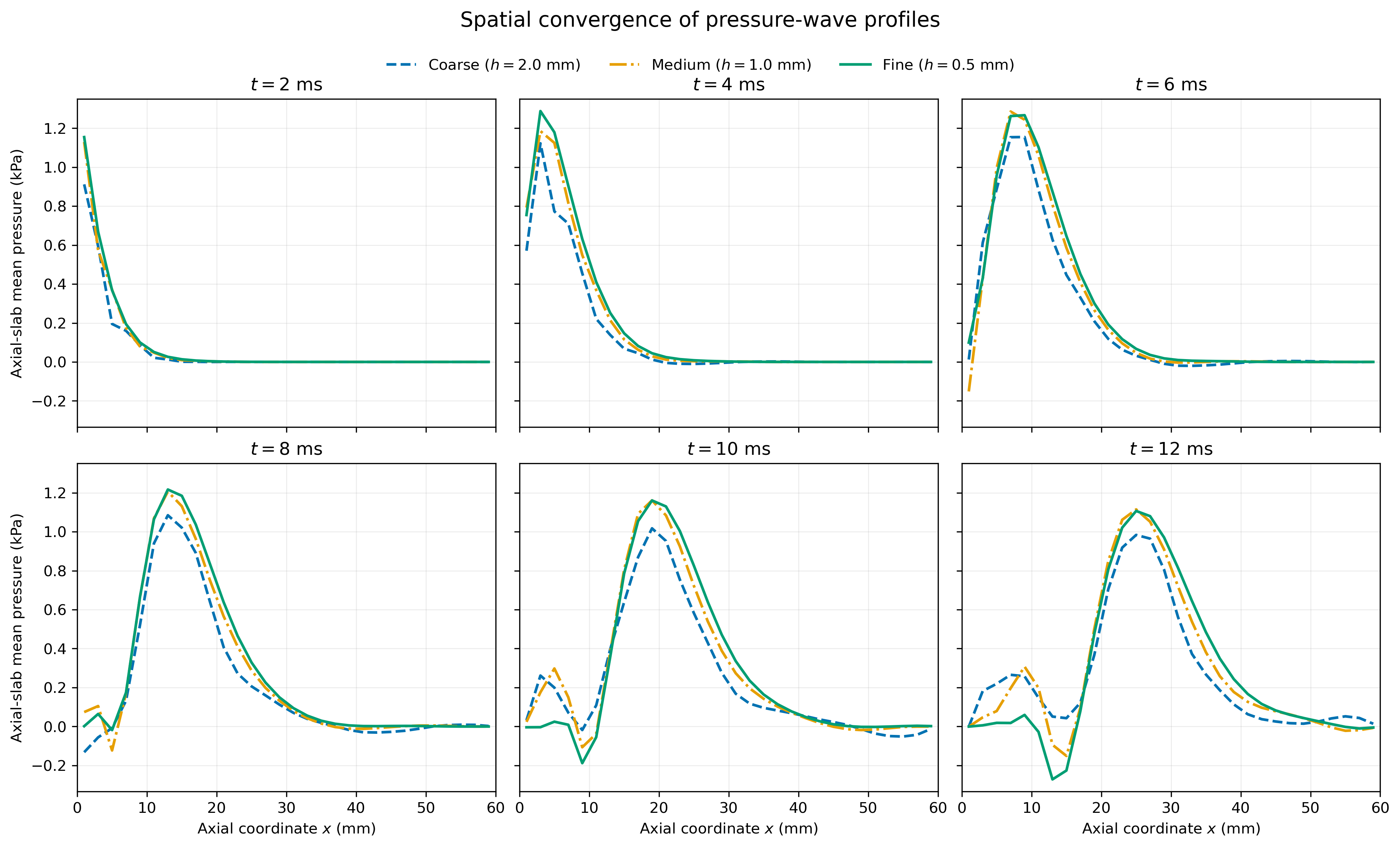}
    \caption{Spatial convergence comparison of the axial averaged pressure
    profiles at $t=2$, $4$, $6$, $8$, $10$, and $12~\mathrm{ms}$ for the modified Naghdi shell model.
    The curves correspond to the coarse, medium, and fine spatial
    discretizations, while the time-step size is held fixed with $\Delta t= 2.5\times10^{-5}$.}
    \label{fig:penalized_spatial_axial_pressure_convergence}
\end{figure}

\begin{figure}[!htbp]
    \centering
    \includegraphics[width=0.99\textwidth]
{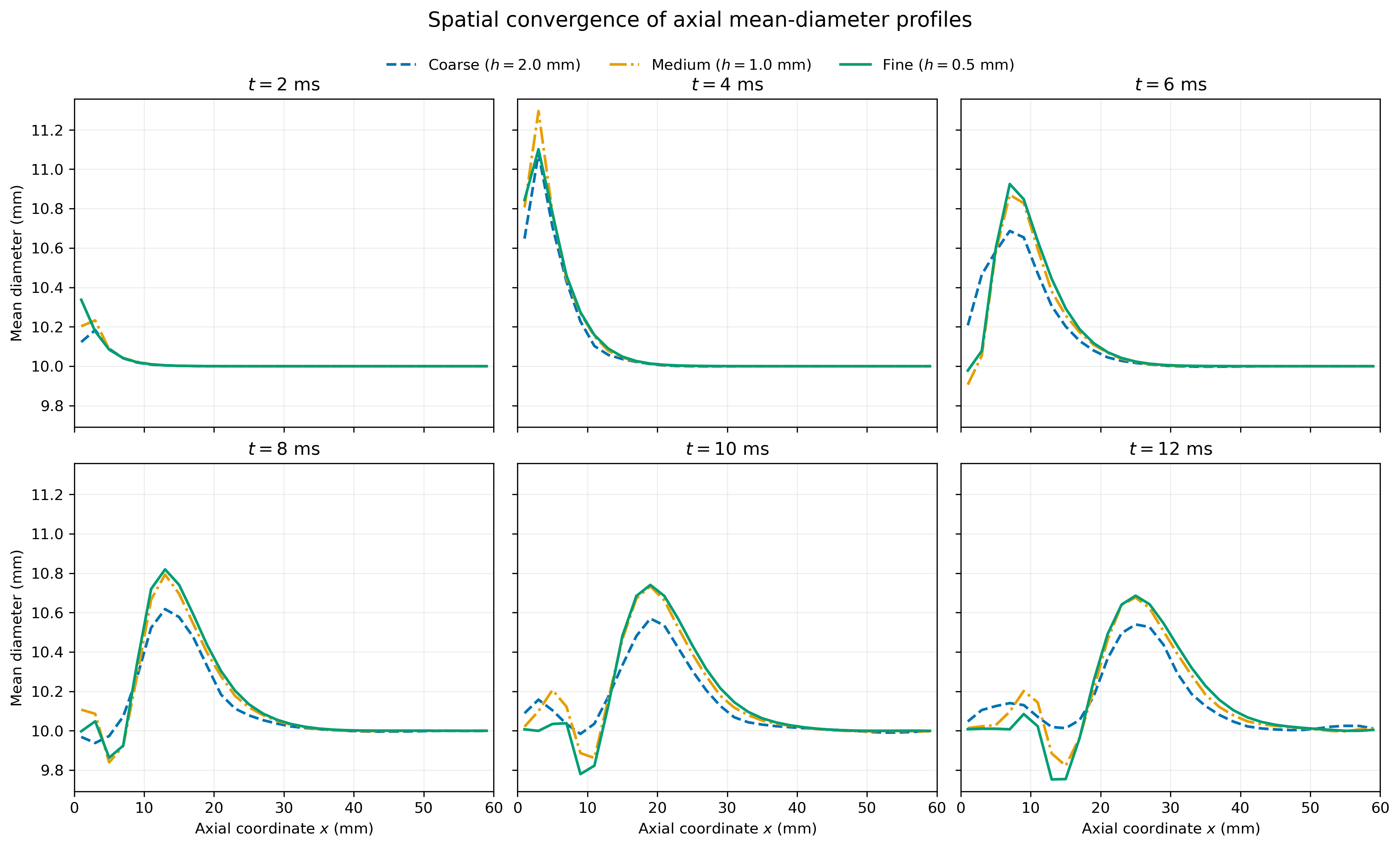}
    \caption{Spatial convergence comparison of the axial mean diameter
    profiles at $t=2$, $4$, $6$, $8$, $10$, and $12~\mathrm{ms}$ for the modified Naghdi shell model.
    The curves correspond to the coarse, medium, and fine spatial
    discretizations, while the time-step size is held fixed $\Delta t= 2.5\times10^{-5}$.}
\label{fig:penalized_spatial_axial_diameter_convergence}
\end{figure}

\begin{figure}[!htbp]
    \centering
    \includegraphics[width=0.99\textwidth]
{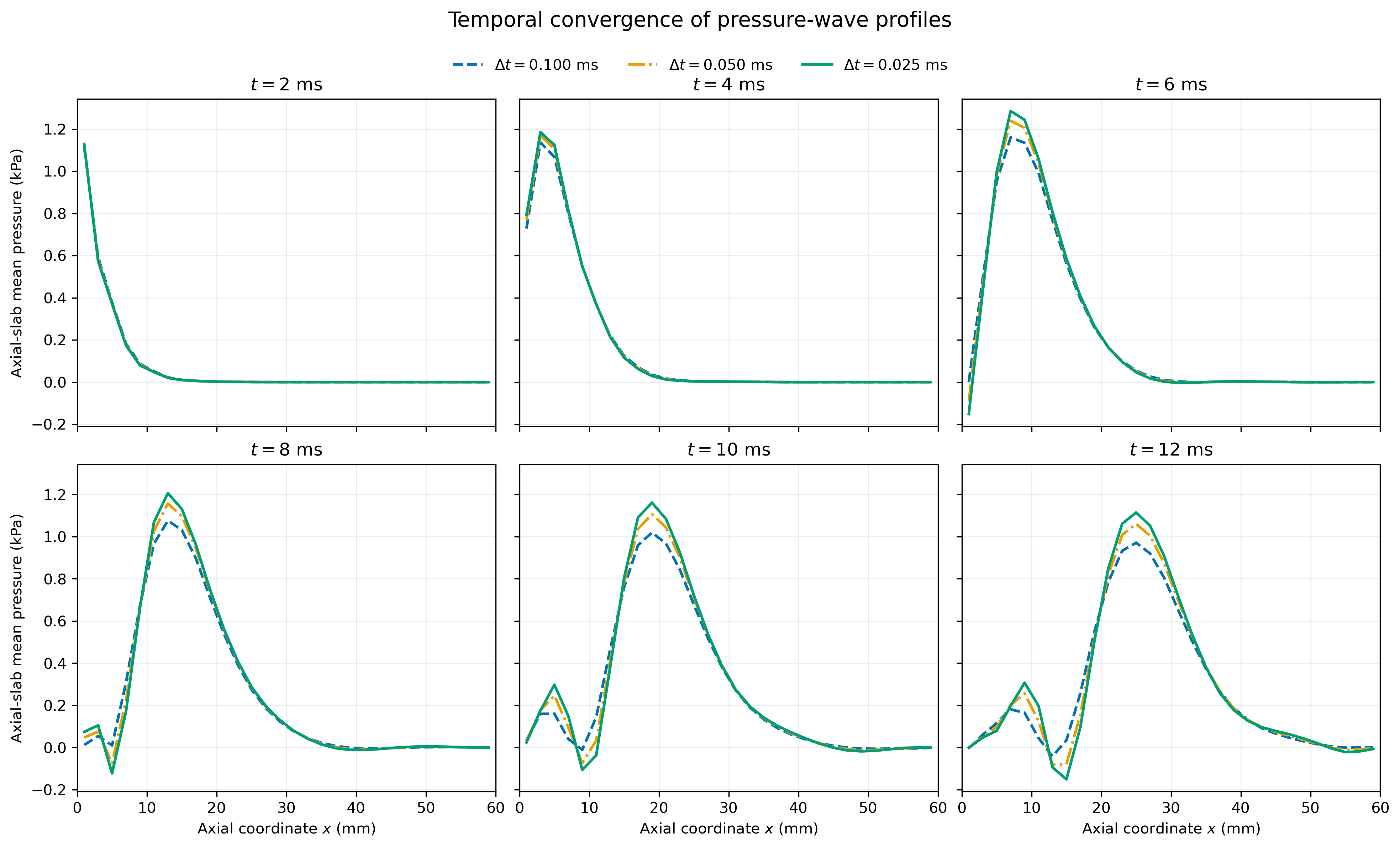}
    \caption{Temporal convergence comparison of the axial averaged pressure
    profiles at $t=2$, $4$, $6$, $8$, $10$, and $12~\mathrm{ms}$ for the modified Naghdi shell model.
    The curves correspond to successively refined time-step sizes,
    while the spatial discretization is fixed at the fine-mesh level.}
    \label{fig:penalized_temporal_axial_pressure_convergence}
\end{figure}

\begin{figure}[!htbp]
    \centering
    \includegraphics[width=0.99\textwidth]
{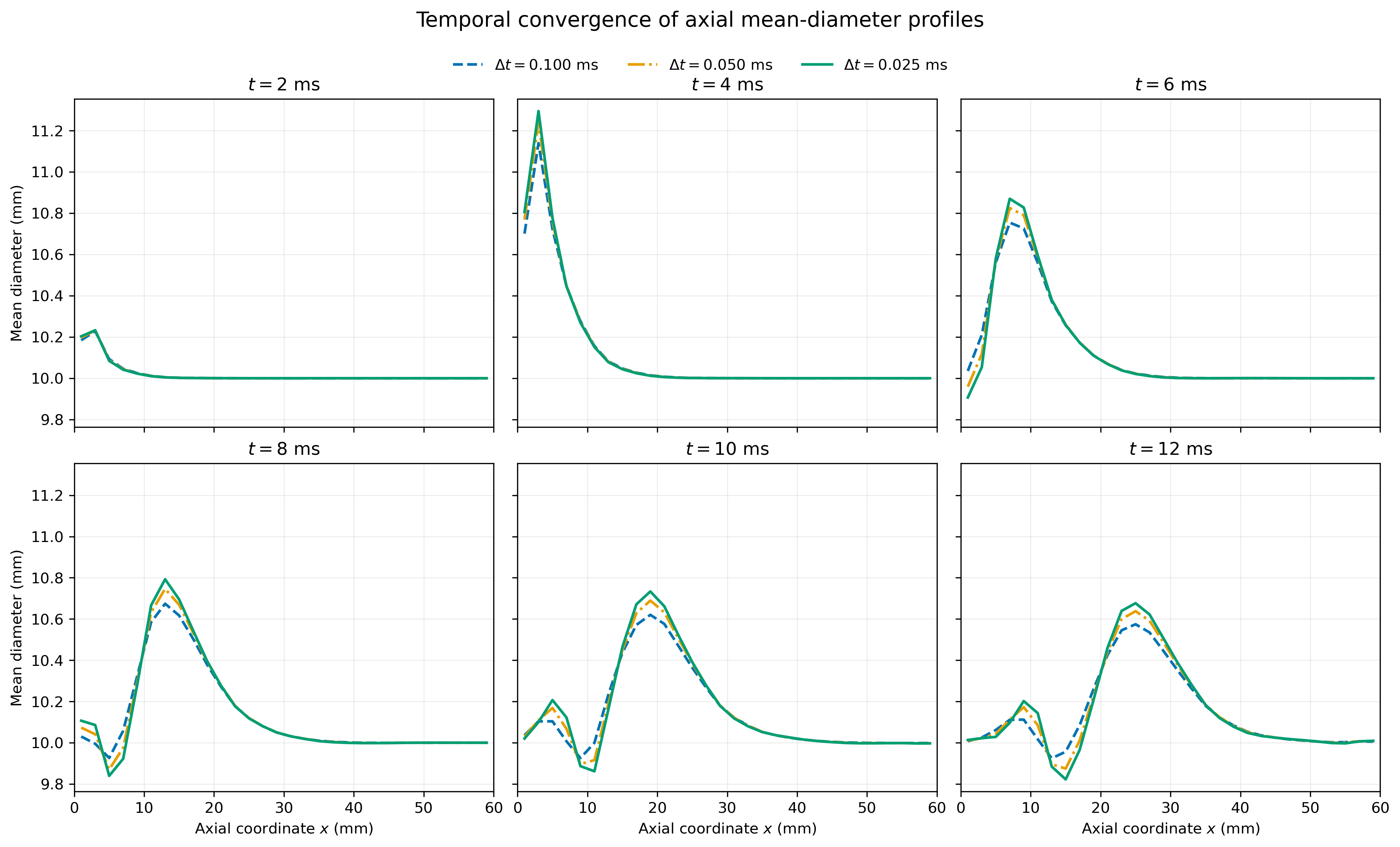}
    \caption{Temporal convergence comparison of the axial mean diameter
    profiles at $t=2$, $4$, $6$, $8$, $10$, and $12~\mathrm{ms}$ for the modified Naghdi shell model.
    The curves correspond to successively refined time-step sizes,
    while the spatial discretization is fixed at the fine-mesh level.}
    \label{fig:penalized_temporal_axial_diameter_convergence}
\end{figure}

\begin{remark}
[Comparison of wave speeds in the 2D and 3D models]
\label{remark7_1}
For small-amplitude, long-wavelength disturbances in a compliant vessel, the
wave speed is approximately
\[
c^2
\approx
\frac{A_0}
{\rho_f
\left.
\dfrac{dA}{dp}
\right|_{A=A_0}},
\]
where $A_0$ is the undeformed cross-sectional area and
$\left.dA/dp\right|_{A=A_0}$ is the tangent cross-sectional compliance.

To isolate the effect of dimensionality, suppose that the 2D and 3D models
satisfy the same linear pressure radial displacement relation, we have
\[
p=K\eta_r,
\qquad
\frac{d\eta_r}{dp}=\frac{1}{K},
\]
where $K$ is the radial stiffness per unit displacement. For the 2D case, the cross-sectional area per unit out of plane depth is
\[
A_{\mathrm{2D}}=R+\eta_r.
\]
Therefore,
\[
\frac{dA_{\mathrm{2D}}}{dp}
=
\frac{dA_{\mathrm{2D}}}{d\eta_r}
\frac{d\eta_r}{dp}
=
\frac{1}{K},
\]
and thus
\[
c_{\mathrm{2D}}^2
\approx
\frac{KR}{\rho_f}.
\]
While for the 3D case, the cross-sectional area of the cylindrical tube is given by
\[
A_{\mathrm{3D}}
=
\pi(R+\eta_r)^2.
\]
Linearization about the undeformed configuration $\eta_r=0$ gives
\[
A_{\mathrm{3D}}
=
\pi R^2
+
2\pi R\eta_r
+
\mathcal O(\eta_r^2).
\]
Hence,
\[
\frac{dA_{\mathrm{3D}}}{dp}
=
2\pi R\frac{d\eta_r}{dp}
=
\frac{2\pi R}{K}.
\]
It follows that
\[
c_{\mathrm{3D}}^2
\approx
\frac{\pi R^2}
{\rho_f(2\pi R/K)}
=
\frac{KR}{2\rho_f}
=
\frac{1}{2}c_{\mathrm{2D}}^2,
\]
and therefore the wave speeds satisfy that
\[
\boxed{
c_{\mathrm{3D}}
\approx
\frac{1}{\sqrt{2}}\,
c_{\mathrm{2D}}.
}
\]
This means that under the assumed common radial stiffness $K$, the relative cross-sectional area change in the 3D model is twice that in the 2D model.
The 3D model consequently has a
lower pressure-wave speed, which is about $\sqrt{2}$ times slower.
\end{remark}

\begin{figure}[!htbp]
    \centering
    \captionsetup[subfigure]{font=small,skip=2pt}
    \begin{subfigure}[t]{0.49\textwidth}
        \centering
    \includegraphics[width=\linewidth]{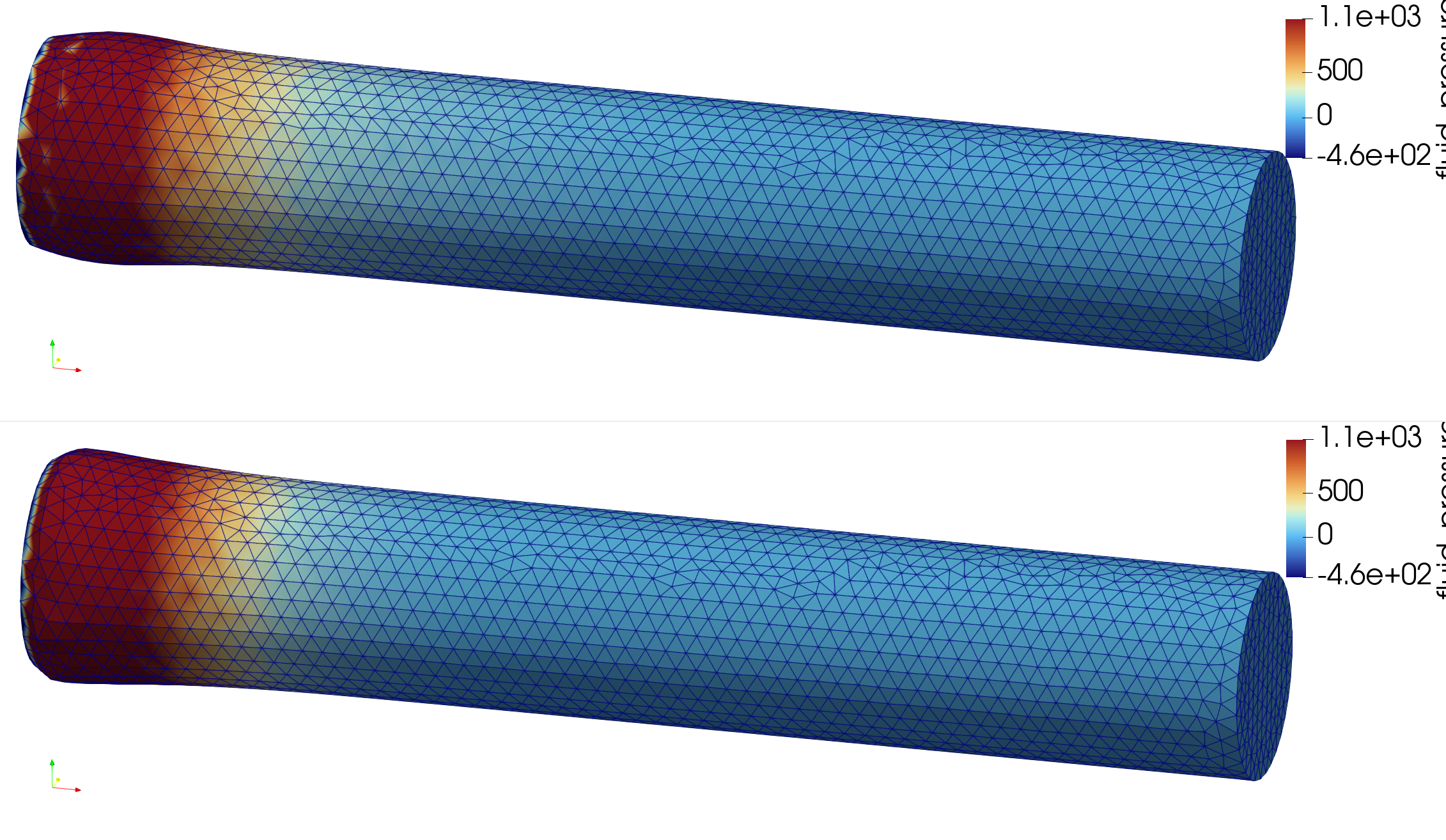}
        \caption{$t=0.004~\mathrm{s}$}
        \label{fig:pressure_wave_004}
    \end{subfigure}
    \hfill
    \begin{subfigure}[t]{0.49\textwidth}
        \centering
    \includegraphics[width=\linewidth]{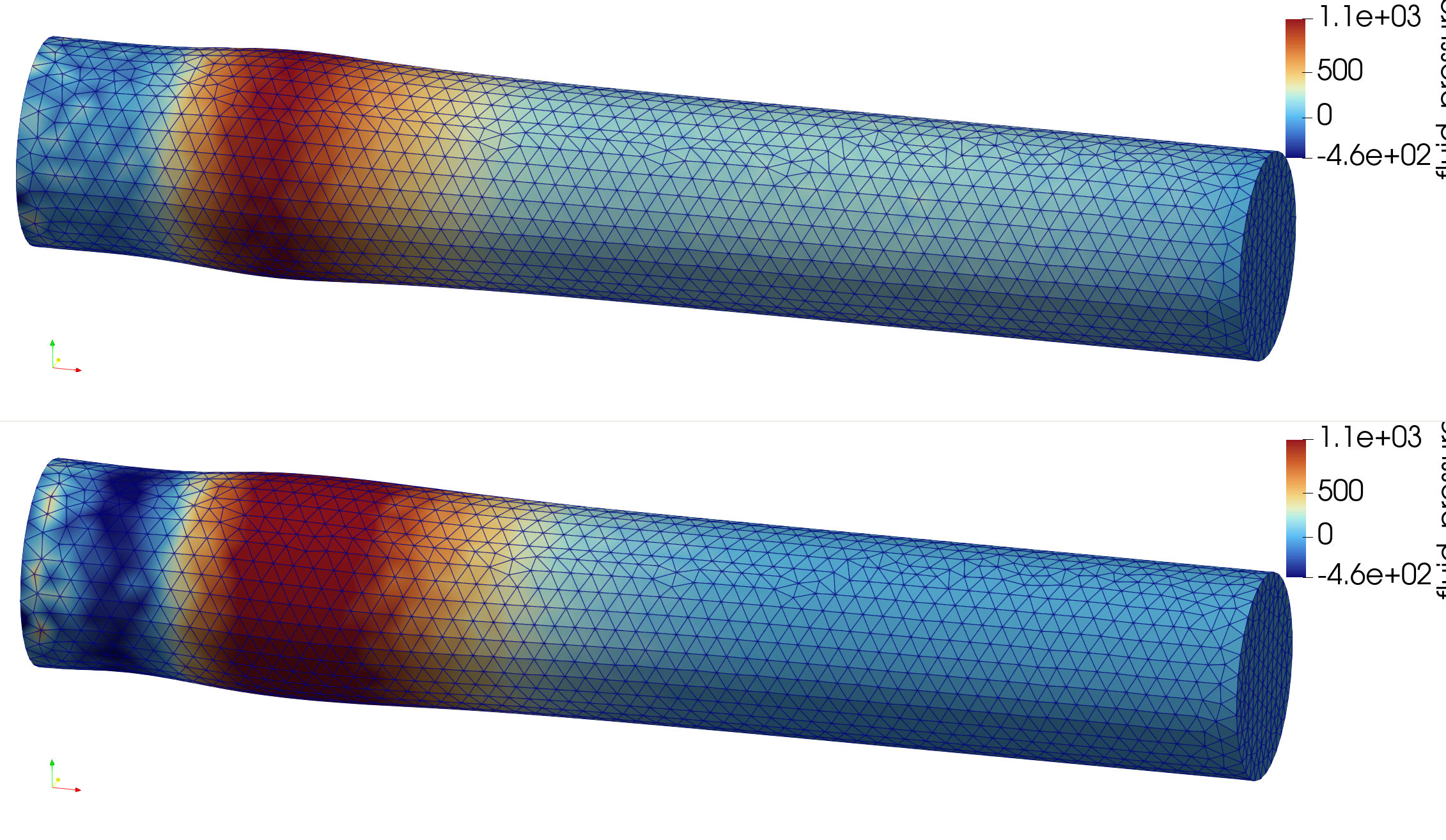}
        \caption{$t=0.008~\mathrm{s}$}
        \label{fig:pressure_wave_008}
    \end{subfigure}

    \par\medskip

    \begin{subfigure}[t]{0.49\textwidth}
        \centering
        \includegraphics[width=\linewidth]{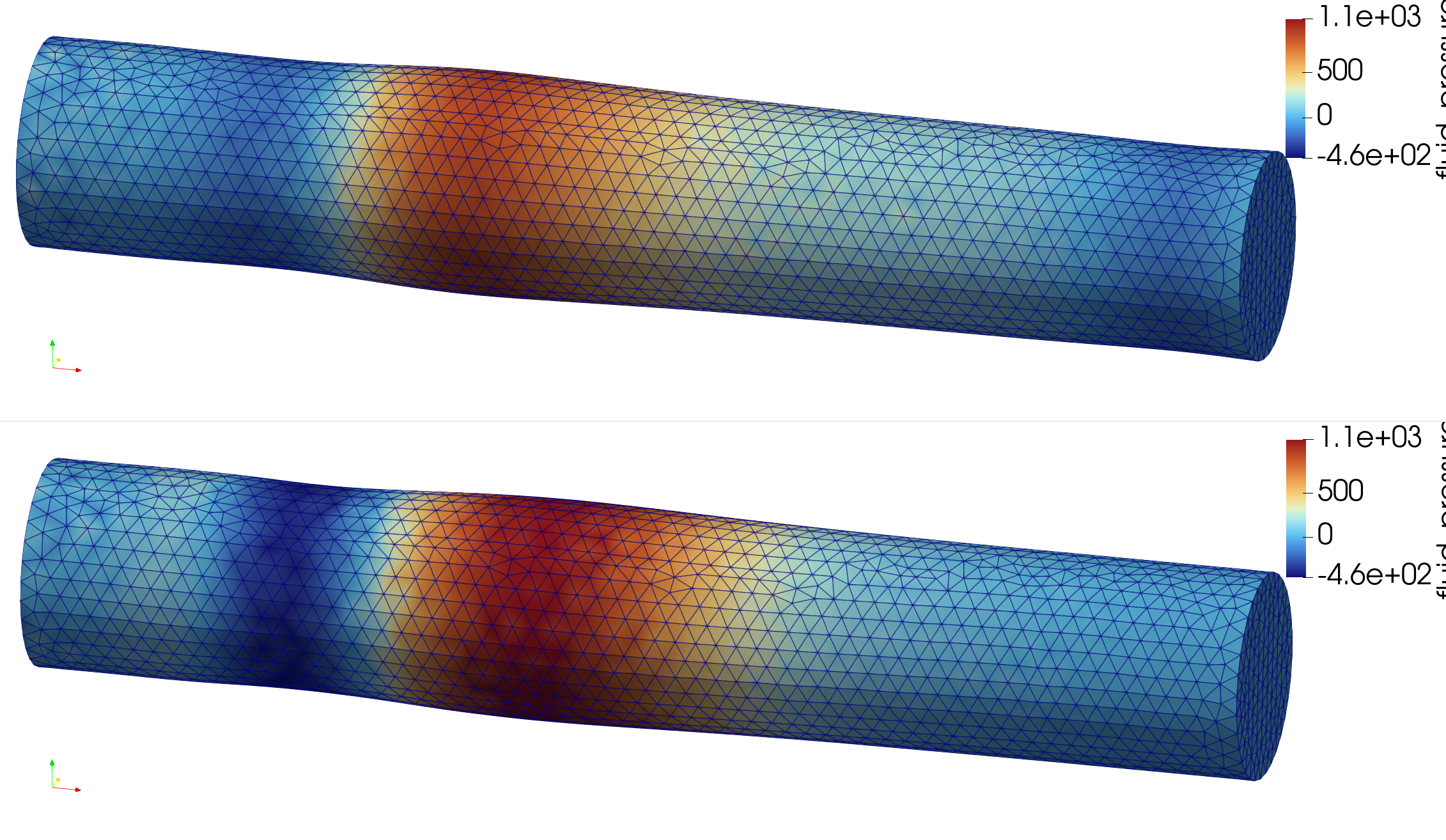}
        \caption{$t=0.012~\mathrm{s}$}
        \label{fig:pressure_wave_012}
    \end{subfigure}
    \hfill
    \begin{subfigure}[t]{0.49\textwidth}
        \centering
        \includegraphics[width=\linewidth]{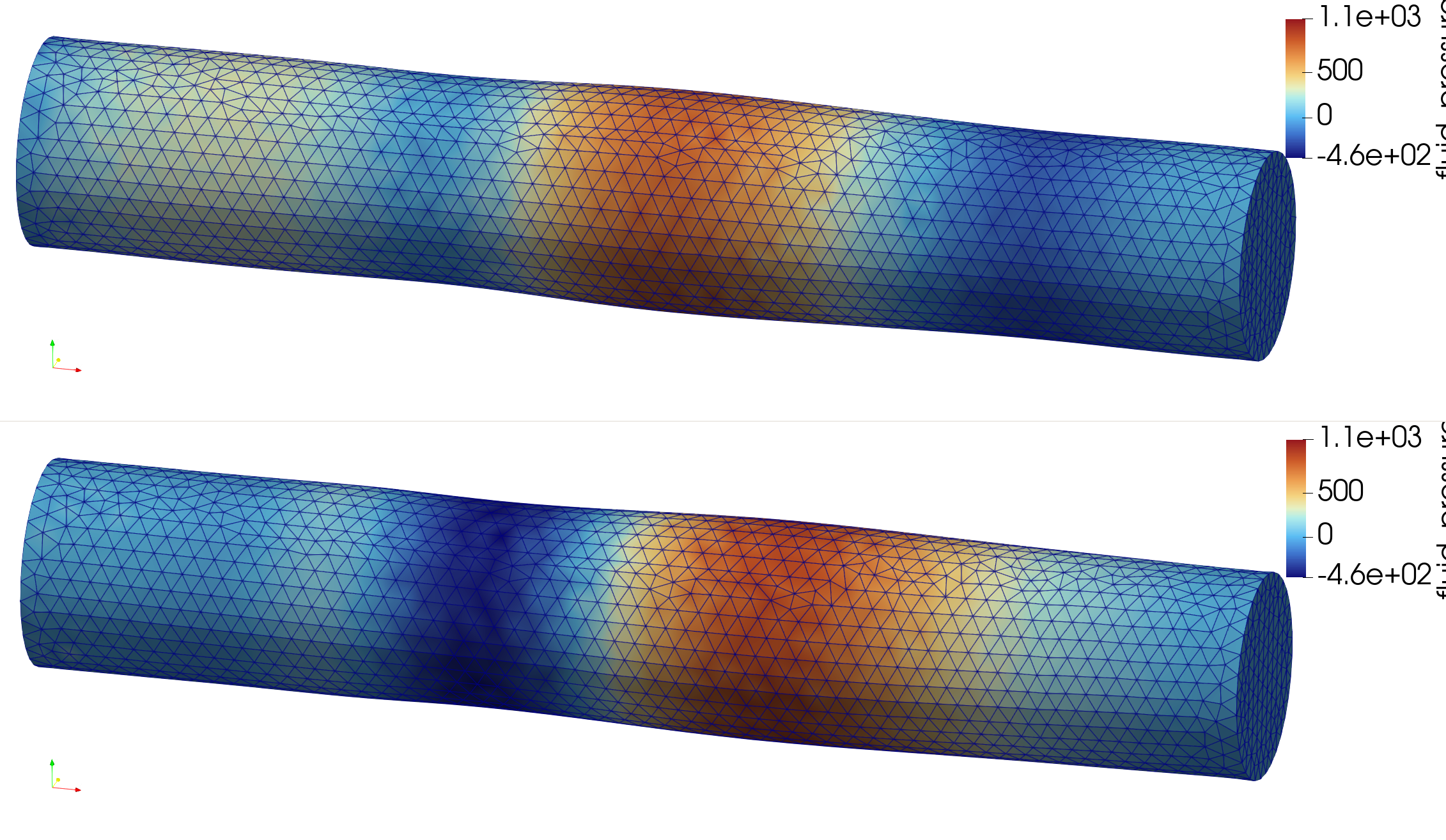}
        \caption{$t=0.016~\mathrm{s}$}
        \label{fig:pressure_wave_016}
    \end{subfigure}
    \par\medskip
    \begin{subfigure}[t]{0.49\textwidth}
        \centering
        \includegraphics[width=\linewidth]{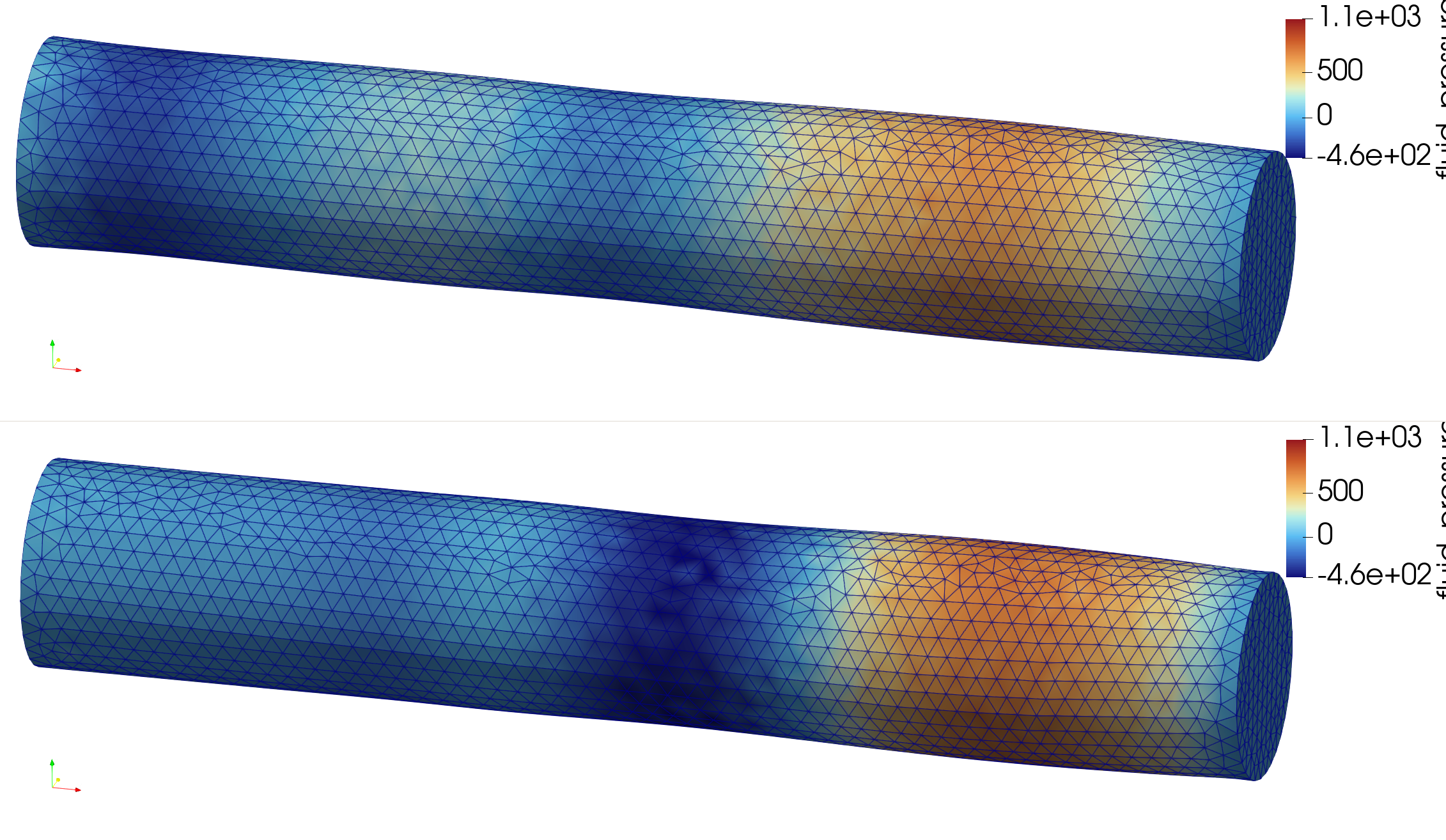}
        \caption{$t=0.020~\mathrm{s}$}
        \label{fig:pressure_wave_020}
    \end{subfigure}
    \hfill
    \begin{subfigure}[t]{0.49\textwidth}
        \centering
        \includegraphics[width=\linewidth]{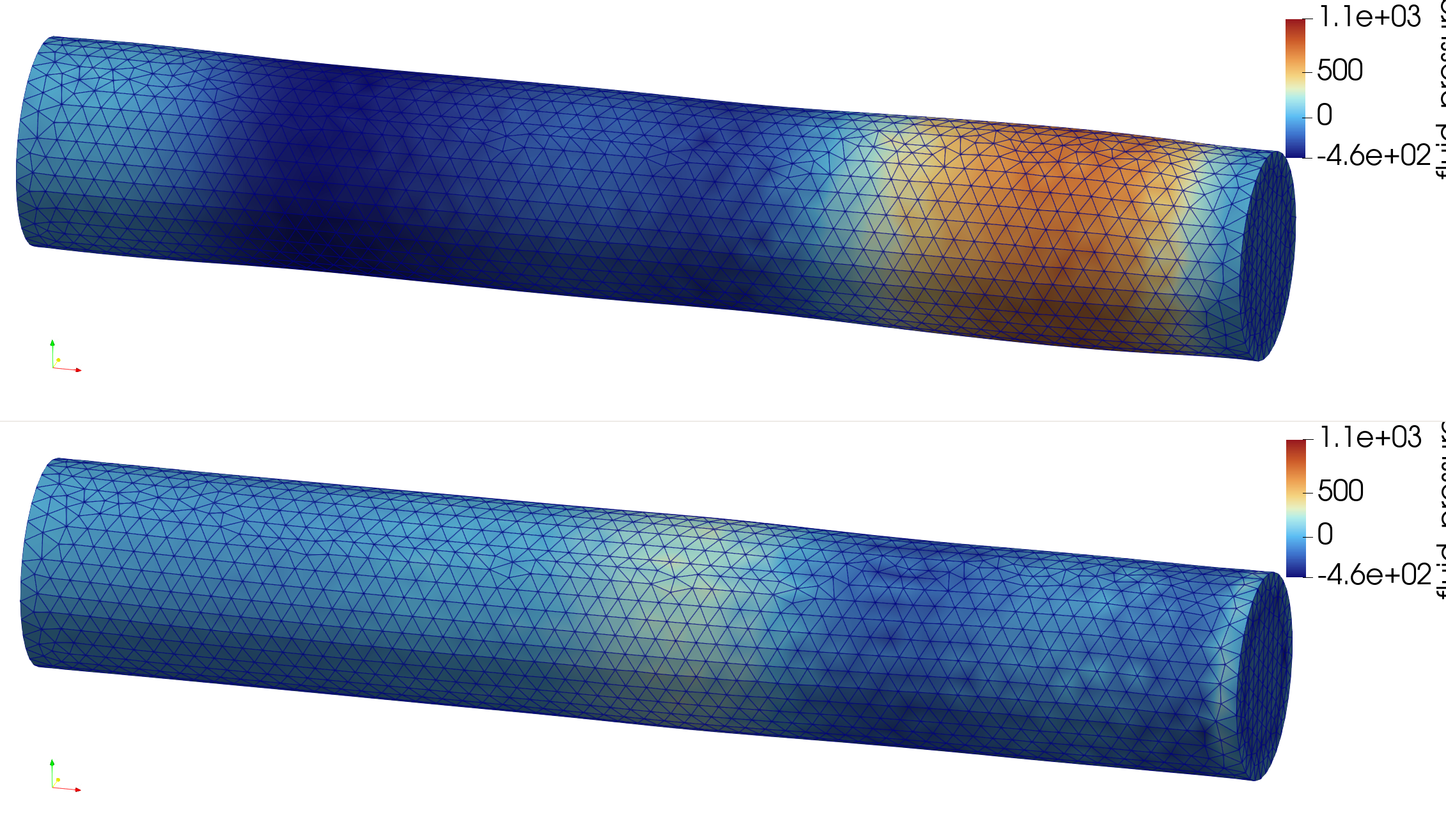}
        \caption{$t=0.024~\mathrm{s}$}
        \label{fig:pressure_wave_024}
    \end{subfigure}

    \caption{
        Snapshots of pressure-wave propagation at six time levels. In each
        subfigure, the upper panel shows the standard Naghdi-shell result from
        numerical example~1 in Section~\ref{sec:straight_pressure_wave}, whereas the lower panel shows the penalized
        Naghdi-shell result from numerical example~2 in Section~ \ref{sec:straight_pressure_wave_penalized}, which is designed to
        produce a Koiter-like shell response. The same pressure color scale is
        used in both panels. 
    }
\label{fig:pressure_wave_naghdi_comparison}
\end{figure}
Figure~\ref{fig:pressure_wave_naghdi_comparison} compares the propagation of
the pressure pulse obtained with the standard and penalized Naghdi-shell
models. The pressure-wave front in the penalized model is located farther downstream, demonstrating that it propagates faster than the wave obtained with the standard Naghdi model. Moreover,
the 3D penalized-shell wave speed remains approximately a factor of $\sqrt{2}$ below the corresponding 2D
Koiter-shell wave speed, as discussed in Remark ~\ref{remark7_1}.

\subsection{Pulmonary artery banding with Windkessel parameter estimation}
\label{sec:pulmonary_banding_parameters}
The physiological scales used in this numerical experiment are guided by the
multiscale circulation model of Baker et al.~\cite{baker2013effects}. Their
model couples a 3D representation of the bilateral branch
pulmonary arteries to a lumped-parameter model of the complete circulation.
By contrast, the present example considers one idealized compliant
pulmonary-artery segment coupled to a single downstream Windkessel outlet.
Therefore, the present setup should be only viewed as a proof-of-concept parameter estimation problem rather than a direct reproduction of the studied in~\cite{baker2013effects}.

\paragraph{Geometry and parameter setting}
We adapted the reference pulmonary and systemic vascular resistance indices as follows:
\[
\mathrm{PVRI}
=
2.3~
\mathrm{mmHg\,min\,L^{-1}\,m^2},
\qquad
\mathrm{SVRI}
=
21.6~
\mathrm{mmHg\,min\,L^{-1}\,m^2}.
\]
The reference heart rate is
$HR=120~\mathrm{beats/min}$, corresponding to the nominal cardiac-cycle
duration of $T_c={60}/{HR}=0.5~\mathrm{s}$.
The total simulation time of $1~\mathrm{s}$ therefore corresponds to two
nominal cardiac cycles. The inlet condition used below is time independent,
however, so $T_c$ will only serve as a physiological reference time in this example. The systemic vascular resistance is also reported only for context
and does not enter the fluid boundary conditions.

Blood is modeled as an incompressible Newtonian fluid with density of
$\rho_f=1060~\mathrm{kg\,m^{-3}}$
and dynamic viscosity 
$\mu_f=0.0035~\mathrm{Pa\,s}$.
The computational domain is an idealized single banded pulmonary-artery
segment, as shown in Figure~\ref{fig:banded_pa_geometry}. 
\begin{figure}[htbp]
    \centering
    \begin{subfigure}[t]{0.9\textwidth}
        \centering
     \includegraphics[width=\linewidth]{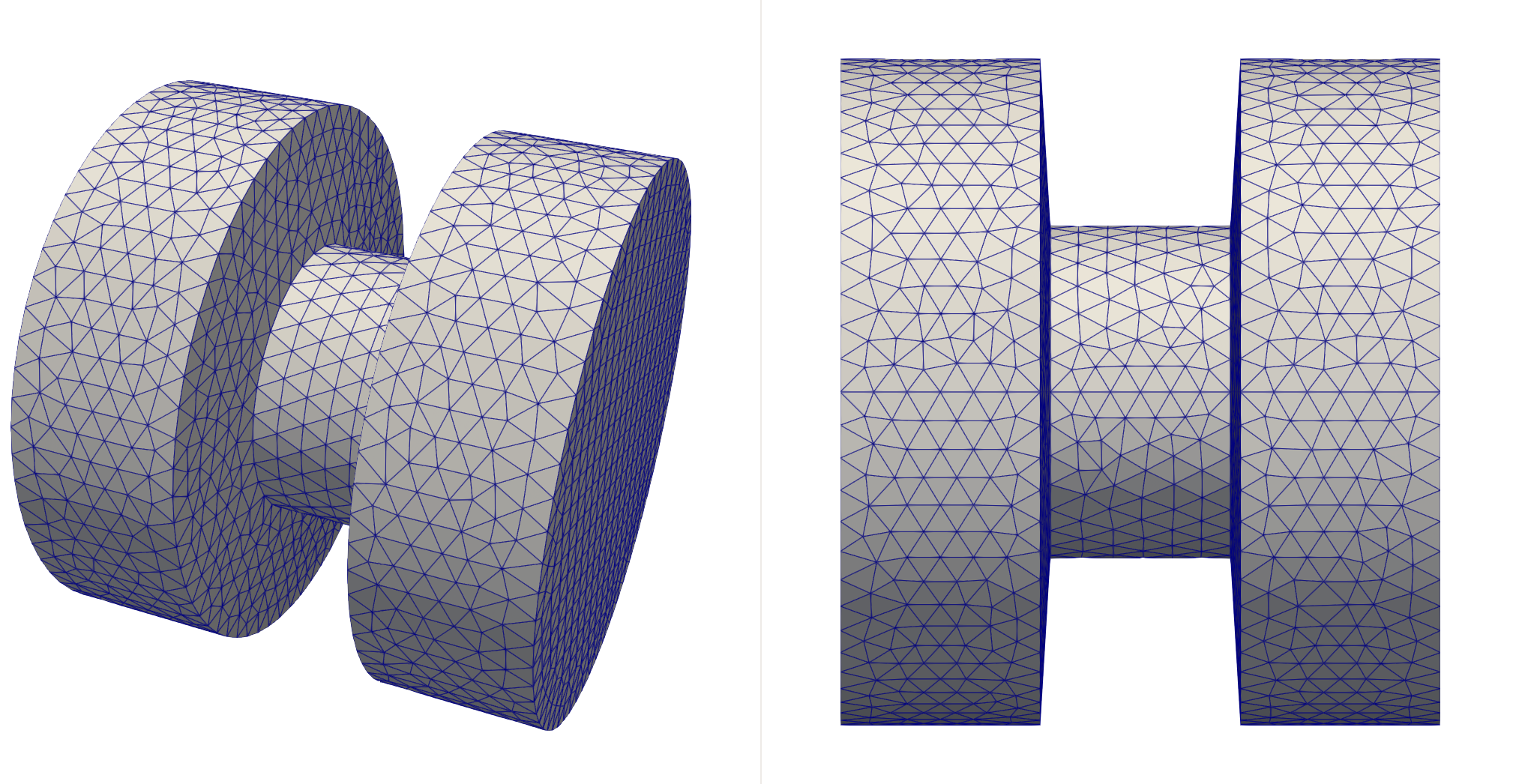}

     \includegraphics[width=\linewidth]{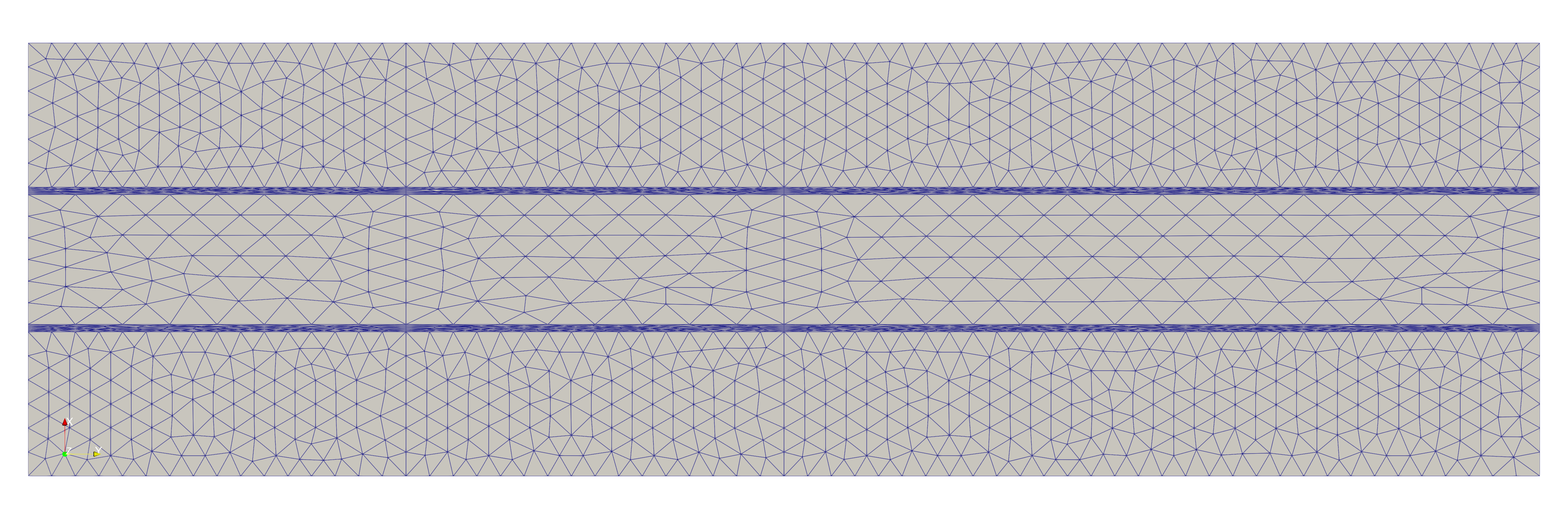}
    \end{subfigure}
\caption{Computational meshes used for the pulmonary banding example with the five-region banded geometry. The shell mesh consists of 5,920 triangular elements, while the fluid mesh consists of 20,245 tetrahedral elements. Fluid mesh: (a) side view; (b) the front view to show the five regions. Shell mesh: (c) horizontal view of Naghdi Shell mesh, showing five regions. The fluid and shell mesh are conformed at the interface.  }
\label{fig:banded_pa_geometry}
\end{figure}
The native radius, minimum band radius, and axial length of the fully constricted region are chosen as
\[
R_0=5~\mathrm{mm},
\qquad
R_b=2.5~\mathrm{mm},
\qquad
L_b=3~\mathrm{mm}.
\]
Here, $R_b$ denotes a radius and therefore corresponds to a minimum band
diameter of $5~\mathrm{mm}$. 
The length of each ramp region is chosen as
\(\ell_r=0.00015~\mathrm{m}=0.15~\mathrm{mm}\), which corresponds to
\(L/60\) for the total vessel length \(L=0.009~\mathrm{m}\). The vessel
radius is therefore defined by the continuous piecewise-affine function
\begin{equation}
R(x)=
\begin{cases}
R_{\mathrm{end}},
& 0\leq x\leq 0.003,
\\[1mm]
R_{\mathrm{end}}
+\displaystyle\frac{x-0.003}{0.00015}
\left(R_{\mathrm{mid}}-R_{\mathrm{end}}\right),
& 0.003<x<0.00315,
\\[3mm]
R_{\mathrm{mid}},
& 0.00315\leq x\leq 0.00585,
\\[1mm]
R_{\mathrm{mid}}
+\displaystyle\frac{x-0.00585}{0.00015}
\left(R_{\mathrm{end}}-R_{\mathrm{mid}}\right),
& 0.00585<x<0.006,
\\[3mm]
R_{\mathrm{end}},
& 0.006\leq x\leq 0.009,
\end{cases}
\label{eq:five_region_radius}
\end{equation}
where all axial coordinates are measured in meters.

\begin{remark}
For long-time simulations, a conforming mesh between the fluid and shell meshes at the fluid–structure interface is necessary in the present study. When the fluid and shell meshes are generated independently, repeated projection and interpolation of displacement, velocity, and traction between non-conforming interface nodes may fail to preserve circumferential symmetry and the discrete balance of forces and moments. Although the resulting transfer error may be small at each time step, it can accumulate over time and excite a spurious internal rotational mode (Our simulations with non-conforming meshes has observed this effects). This artificial rotation can contaminate the computed displacement, stress, and flow fields and, through the ALE update, progressively distort the fluid mesh. The resulting deterioration in mesh quality may eventually cause solver stagnation or premature termination of the simulation. Constructing the shell mesh directly from the corresponding fluid-interface facets provides consistent geometric and topological correspondence, suppresses artificial interface torque, and substantially improves the accuracy and robustness of long-time FSI simulations.
\end{remark}

The corresponding undeformed cross-sectional areas are
\[
A_0
=
\pi R_0^2
=
7.854\times10^{-5}~\mathrm{m^2},
\qquad
A_b
=
\pi R_b^2
=
1.963\times10^{-5}~\mathrm{m^2}.
\]
Thus, the band produces a
\[
1-\frac{A_b}{A_0}
=
1-\left(\frac{R_b}{R_0}\right)^2
=
1-\left(\frac{0.0025}{0.005}\right)^2
=
75\%
\]
reduction in the reference cross-sectional area.

We consider the infant's nominal pulmonary flow rate as
\[
Q_0
=
2.0~\mathrm{L/min}
=
3.333\times10^{-5}~\mathrm{m^3/s}.
\]
The associated cross-sectionally averaged inlet velocity is
\[
\overline U_{\mathrm{in}}
=
\frac{Q_0}{A_0}
=
\frac{3.333\times10^{-5}}
     {7.854\times10^{-5}}
\approx
0.424~\mathrm{m/s}.
\]

At the inlet, a steady parabolic velocity profile is imposed:
\[
\boldsymbol u_{\mathrm{in}}(y,z)
=
2\overline U_{\mathrm{in}}
\left(
1-\frac{y^2+z^2}{R_0^2}
\right)
\boldsymbol e_x,
\qquad
y^2+z^2\leq R_0^2.
\]
Based on these parameters, the inlet Reynolds number is approximately
\[
\mathrm{Re}_0
=
\frac{\rho_f\overline U_{\mathrm{in}}(2R_0)}{\mu_f}
\approx
1.28\times10^3.
\]
The mean velocity based on the undeformed band area and the corresponding
local Reynolds number are
\[
\overline U_b
=
\frac{Q_0}{A_b}
\approx
1.70~\mathrm{m/s},
\qquad
\mathrm{Re}_b
=
\frac{\rho_f\overline U_b(2R_b)}{\mu_f}
\approx
2.57\times10^3.
\]
Because the inlet velocity is imposed, the inlet pressure is an output of the
simulation and is compared with the pressure observation. For the
present data-assimilation test, the constant inlet pressure observation is
$$
z_{\mathrm{obs}}
=
13~\mathrm{mmHg}
=
1733.19~\mathrm{Pa}.
$$
This value is supplied to each Kalman-filter measurement update. In a
patient-specific calculation, it would be replaced by a time-dependent
inlet-pressure waveform together with its measurement uncertainty.
The downstream circulation is represented by a three-element Windkessel model
with venous reference pressure
\[
P_v
=
5~\mathrm{mmHg}
\approx
666.61~\mathrm{Pa}.
\]
The Windkessel parameters used to initialize the simulation were obtained
using the representative body-surface area
\[
\mathrm{BSA}_{\mathrm{ref}}
=
1.8~\mathrm{m^2}.
\]
The corresponding non-indexed pulmonary vascular resistance is
$$
\mathrm{PVR}_{\mathrm{ref}}
=
\frac{\mathrm{PVRI}}
     {\mathrm{BSA}_{\mathrm{ref}}}
=
\frac{2.3}{1.8}
\approx
1.278~
\mathrm{mmHg\,min/L}.
$$
For the single-outlet model, this value is used to establish the initial scale
of the total downstream Windkessel resistance. With a proximal-resistance
fraction of $\alpha=0.1$ and a target distal time constant of
$\tau=0.2~\mathrm{s}$, the corresponding parameter values are
\[
R_p^0
\approx
1.0\times10^6~
\mathrm{Pa\,s\,m^{-3}},
\qquad
R_d^0
\approx
9.1\times10^6~
\mathrm{Pa\,s\,m^{-3}},
\qquad
C^0
\approx
2.1\times10^{-8}~
\mathrm{m^3\,Pa^{-1}}.
\]
which are used for the logarithmic Kalman-filter state initialization.

The prescribed inlet pressure, flow rate, and venous pressure imply the
effective inlet-to-venous resistance
$$
R_{\mathrm{eff,obs}}
=
\frac{z_{\mathrm{obs}}-P_v}{Q_0}
=
\frac{13-5}{2.0}
=
4.00~\mathrm{mmHg\,min/L}.
$$
This effective resistance includes both the pressure loss resolved within the
3D banded segment and the contribution of the downstream Windkessel model. It
is therefore not expected to equal $\mathrm{PVR}_{\mathrm{ref}}$, which is
assigned to the initial downstream Windkessel resistance. Under the initial
scaling, their difference,
\[
R_{\mathrm{eff,obs}}-\mathrm{PVR}_{\mathrm{ref}}
\approx
2.72~\mathrm{mmHg\,min/L},
\]
represents the effective resistance contribution required from the resolved
3D domain. Using
\[
1~\mathrm{mmHg\,min/L}
=
7.99932\times10^6~
\mathrm{Pa\,s\,m^{-3}},
\]
we can obtain the initial total Windkessel resistance
$$
R_{\mathrm{RCR}}^0
=
R_p^0+R_d^0
=
1.01\times10^7~
\mathrm{Pa\,s\,m^{-3}}
\approx
1.263~
\mathrm{mmHg\,min/L}.
$$
We notice that the small difference between $R_{\mathrm{RCR}}^0$ and the reference
value $\mathrm{PVR}_{\mathrm{ref}}$ results from rounding the implemented
Windkessel parameters.
And the initial proximal-resistance fraction and distal time constant can be computed as
$$
\alpha^0
=
\frac{R_p^0}{R_p^0+R_d^0}
\approx
0.099,
\qquad
\tau^0
=
R_d^0C^0
\approx
0.191~\mathrm{s}.
$$

\paragraph{Numerical results}
We run the simulation for a total of 
$t=1s$. Figure~\ref{fig:final_time_results} presents the shell displacement magnitude and the fluid velocity magnitude at the final
simulation time.
\begin{figure}[htbp]
    \centering
\includegraphics[width=0.55\textwidth]{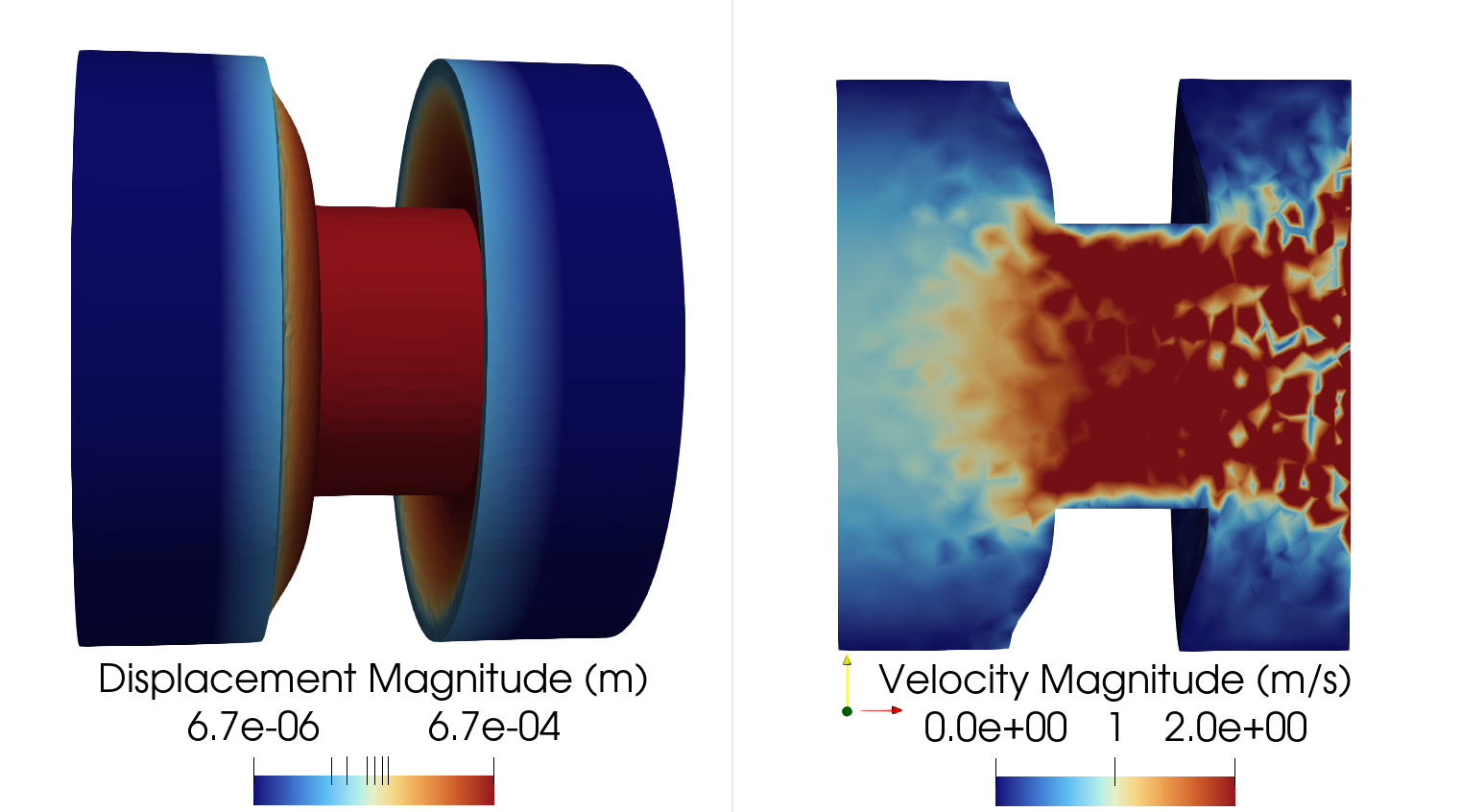}
\includegraphics[width=0.42\textwidth]{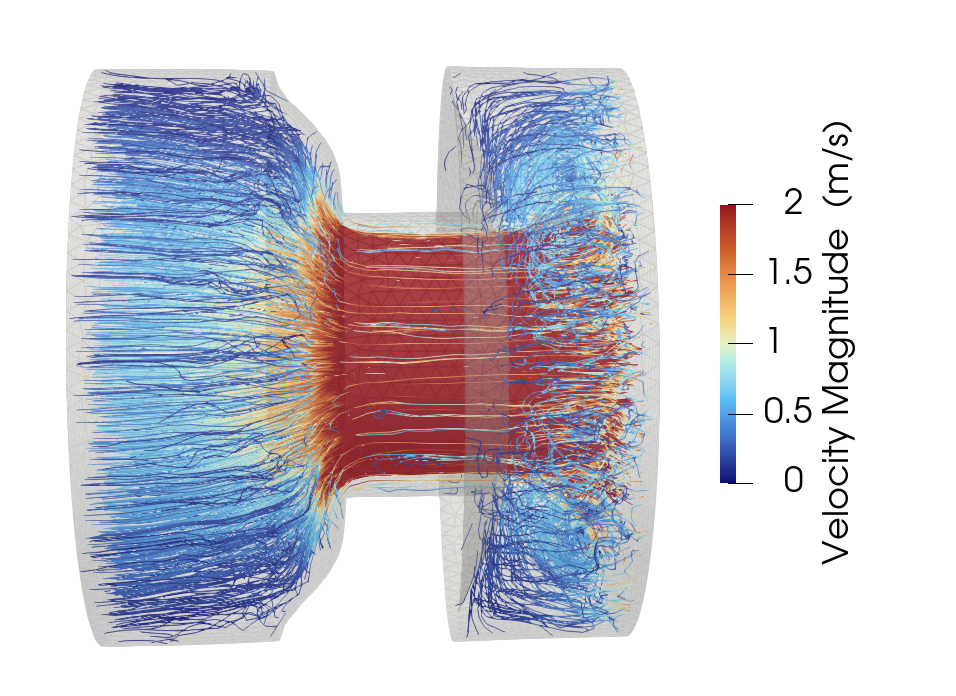}    \caption{Simulation results at the final time $t=1 s$, showing the shell displacement magnitude, fluid velocity magnitude, and fluid velocity streamlines. The shell displacement is predominantly axial (horizontal), with the banded region and the transition (ramp) regions on either side displaced toward downstream direction.}
\label{fig:final_time_results}
\end{figure}

At the final simulation time, the estimated Windkessel parameters are 
$$ R_p^{\mathrm{final}} = 2.214\times10^7~ \mathrm{Pa\,s\,m^{-3}}, \qquad R_d^{\mathrm{final}} = 9.146\times10^6~ \mathrm{Pa\,s\,m^{-3}}, \qquad C^{\mathrm{final}} = 2.085\times10^{-8}~ \mathrm{m^3\,Pa^{-1}}.$$ 
The corresponding total resistance is 
$$
R_{\mathrm{RCR}}^{\mathrm{final}} = R_p^{\mathrm{final}}+R_d^{\mathrm{final}} = 3.129\times10^7~ \mathrm{Pa\,s\,m^{-3}} \approx 3.911~ \mathrm{mmHg\,min/L},
$$
with a proximal-resistance fraction and distal time constant of 
$$
\alpha^{\mathrm{final}} = \frac{R_p^{\mathrm{final}}} {R_p^{\mathrm{final}}+R_d^{\mathrm{final}}} \approx 0.708, \qquad \tau^{\mathrm{final}} = R_d^{\mathrm{final}}C^{\mathrm{final}} \approx 0.191~\mathrm{s}. 
$$ Relative to their initial values, $R_p$ increases by a factor of approximately $22.1$, whereas $R_d$ and $C$ change by approximately $0.51\%$ and $0.72\%$, respectively. Accordingly, the proximal-resistance fraction increases from $0.099$ to $0.708$, while the distal time constant remains nearly unchanged over the simulation. These numerical results show that the largest parameter variation occurs in $R_p$, with only minor changes observed in $R_d$ and $C$.

\begin{figure}[htbp]
    \centering
    \begin{subfigure}[t]{0.9\textwidth}
        \centering
        \includegraphics[width=\linewidth]{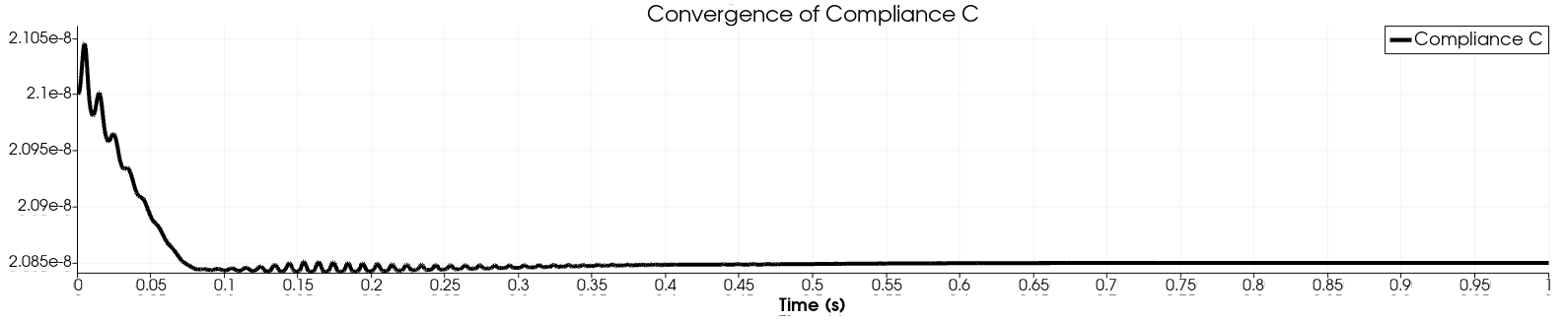}
        \caption{Convergence of the estimated compliance.}
        \label{fig:conv_C}
    \end{subfigure}
    \hfill
    \begin{subfigure}[t]{0.9\textwidth}
        \centering
        \includegraphics[width=\linewidth]{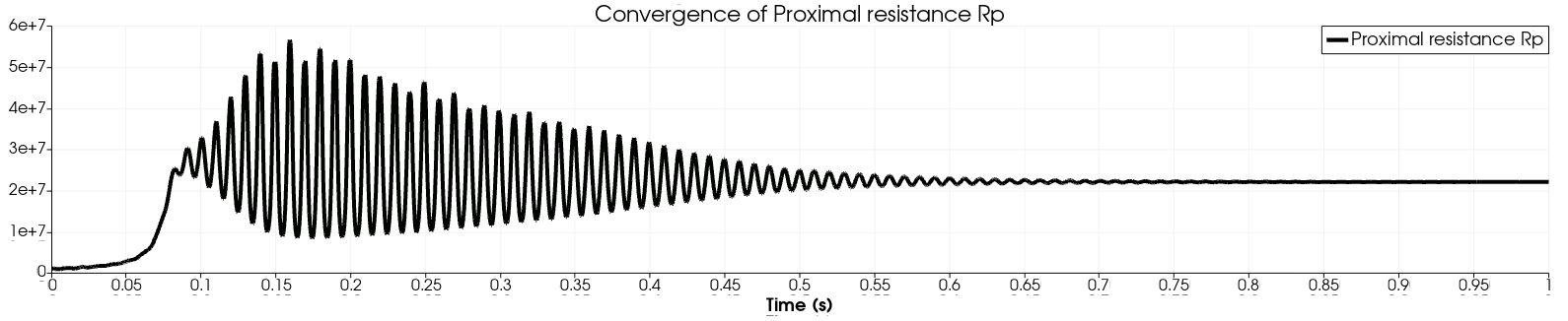}
        \caption{Convergence of the estimated proximal resistance.}
        \label{fig:conv_Rp}
    \end{subfigure}

    \medskip

    \begin{subfigure}[t]{0.9\textwidth}
        \centering
        \includegraphics[width=\linewidth]{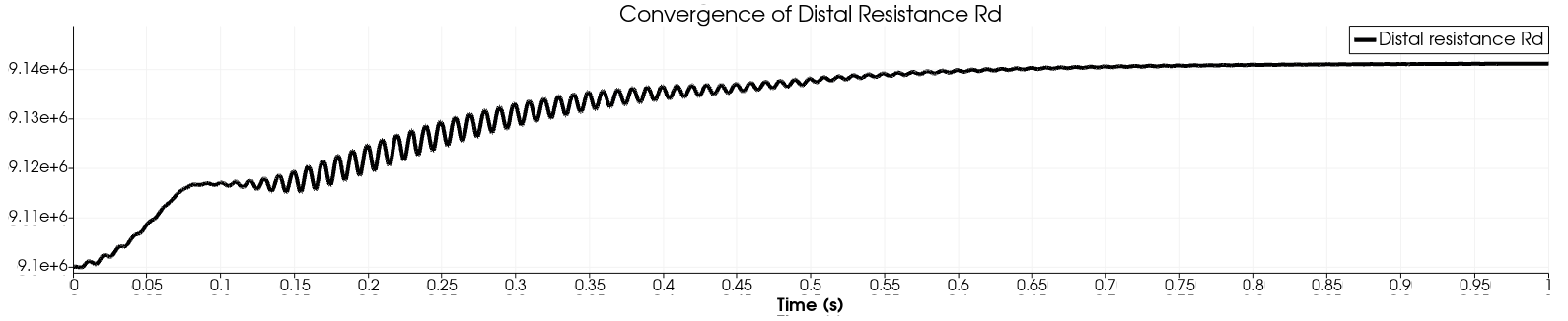}
        \caption{Convergence of the estimated distal resistance.}
        \label{fig:conv_Rd}
    \end{subfigure}
    \hfill
    \begin{subfigure}[t]{0.9\textwidth}
        \centering
        \includegraphics[width=\linewidth]{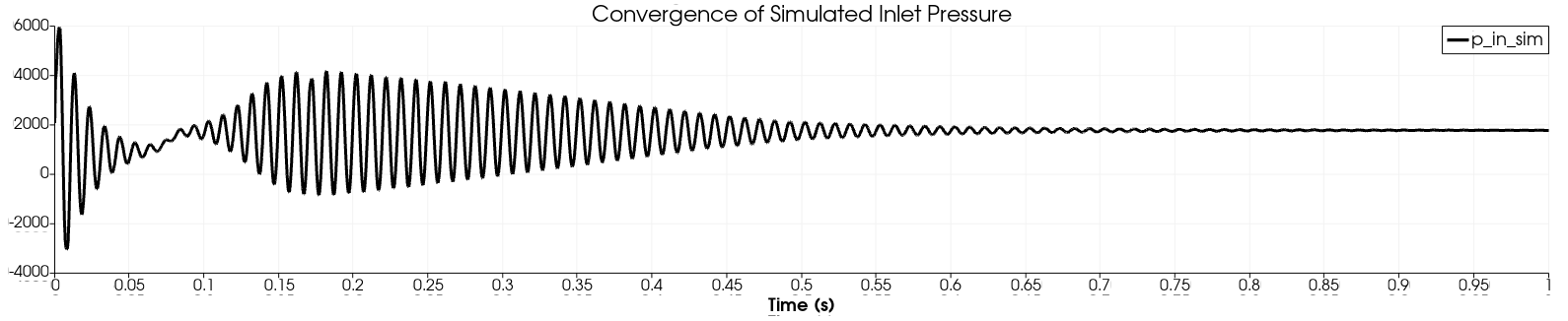}
        \caption{Convergence of the simulated inlet pressure.}
        \label{fig:conv_Pin}
    \end{subfigure}

    \caption{Convergence histories of the estimated Windkessel parameters
    and simulated inlet pressure over the full simulation interval
    \(0\leq t\leq 1\,\mathrm{s}\). At the final simulation time $t=1$ s, the converged estimates are $R_p^{\mathrm{final}}=2.214\times10^7~\mathrm{Pa\,s\,m^{-3}},~
R_d^{\mathrm{final}} =9.146\times10^6~
\mathrm{Pa\,s\,m^{-3}},~
C^{\mathrm{final}} = 2.085\times10^{-8}~
\mathrm{m^3\,Pa^{-1}}.$}
    \label{fig:parameter_convergence}
\end{figure}

Figure~\ref{fig:parameter_convergence} shows that the simulated inlet pressure converges to a stable value of $13.35~
\mathrm{mmHg}$ at the final time, but the filter does not reach to the pressure observation exactly. It differs from the prescribed observation by approximately
$0.35~\mathrm{mmHg}$, corresponding to a relative
discrepancy of $2.69\%$. This remaining innovation may due to the measurement-noise and
process-noise covariances, the initial parameter covariance, and the available observation window. 

The final Windkessel parameters permit a conditional estimate of the inlet to outlet pressure difference. Considering that the outlet flow has approached $Q_0=2.0~\mathrm{L/min}$ and that the Windkessel capacitor pressure is approximately steady, the outlet pressure satisfies
\[
P_{\mathrm{out}}
\approx P_v+Q_0
\left(R_p^{\mathrm{final}}+R_d^{\mathrm{final}}\right)
=5+2\times(3.911)~\mathrm{mmHg}
\approx 12.82~\mathrm{mmHg}.
\]
Together with the simulated inlet pressure of $13.35~\mathrm{mmHg}$, this gives
\[
P_{\mathrm{in}}-P_{\mathrm{out}}
\approx 0.53~\mathrm{mmHg},
\]
which gives the estimation of the pressure drop across the entire 3D vessel segment. Therefore, the downstream Windkessel contributes approximately $7.82~\mathrm{mmHg}$ to the total inlet to venous pressure difference of $8.35~\mathrm{mmHg}$. 

It is worth noting that, because a single pressure observable is used to estimate three Windkessel parameters, convergence of the parameter traces alone may not fully characterize the independent identifiability of all three parameters. The relatively small changes in $R_d$ and $C$, together with the nearly unchanged product of $R_dC$,
may indicate correlations among the estimated parameters. A more detailed investigation of parameter correlation and identifiability will therefore be considered in future work.

\section{Conclusion}

In this work, we developed a coupled computational framework for blood flow
through a deformable, banded pulmonary artery. The model combines the
three-dimensional incompressible Navier-Stokes equations, formulated on a
moving domain using an arbitrary Lagrangian-Eulerian mapping, with a Naghdi shell model for the arterial wall and a three-element Windkessel model for the
downstream circulation. The Naghdi formulation retains membrane, bending, and
transverse-shear effects and is therefore suitable for describing arterial
segments with spatially varying radius and localized constrictions. The
fluid, shell, and Windkessel components are connected through kinematic and
dynamic interface conditions and through the outlet flow and pressure.

A non-iterative partitioned algorithm was introduced for the coupled
fluid-structure-Windkessel problem. At each time step, the method advances
the shell, updates the ALE mapping, and solves the fluid and downstream
circulation without subiterations between subproblems. This separation retains the modularity of the individual fluid and structural solvers while avoiding the computational cost associated with a monolithic formulation. 
For the linearized problem on a fixed domain, a discrete energy estimate was derived to identify the principal energy-transfer mechanisms and to establish the stability of the partitioned coupling under the stated assumptions.

The numerical studies examined several aspects of the proposed framework.
The straight compliant-tube example demonstrated stable pressure-wave
propagation and the expected behavior under temporal and spatial refinement.
Comparisons between the Naghdi shell and reduced shell models showed that the
dominant pressure and radial-displacement responses were similar, while the
choice of structural model and scaling affected the pressure-wave speed and
the local displacement field. Although the axial displacement was much
smaller than the radial displacement, it was not identically zero, and the
results also illustrated the influence of the clamped boundaries on the
propagating pressure waves. These observations support the use of the Naghdi
model when transverse shear, axial deformation, and localized geometric
changes are relevant. The final pulmonary-artery banding example was included as a small
proof-of-concept application of the framework. It demonstrated how the
fluid-Naghdi shell model can be coupled to a three-element Windkessel outlet
and combined with a reduced-order unscented Kalman filter to update the
positive parameters \(R_p\), \(R_d\), and \(C\) from an inlet-pressure
observation. In this illustrative calculation, the estimated parameters and simulated inlet pressure approached stable final time values. 

Future work will consider extensions of applying to pulsatile
patient-specific pressure and flow data and image-based arterial geometries.
Such applications may include systematic investigations of band radius, band width, and arterial-wall properties, as well as uncertainty and parameter
sensitivity analyses. These extensions will help determine the range of biomedical applications for which the proposed coupled
fluid-structure-Windkessel framework can provide reliable quantitative
predictions.

\bibliographystyle{unsrt}
\bibliography{ref}

@article{masud1997space,
  author    = {Masud, Arif and Hughes, Thomas J. R.},
  title     = {A Space-Time Galerkin/Least-Squares Finite Element
               Formulation of the {Navier--Stokes} Equations for
               Moving Domain Problems},
  journal   = {Computer Methods in Applied Mechanics and Engineering},
  volume    = {146},
  number    = {1--2},
  pages     = {91--126},
  year      = {1997},
  month     = jul,
  publisher = {Elsevier},
  doi       = {10.1016/S0045-7825(96)01222-4}
}

@misc{chaudhary_pulmonary_artery_banding,
  author       = {Vimlendra Chaudhary},
  title        = {Pulmonary Artery Banding---Why? When? How?},
  howpublished = {SlideShare, PowerPoint presentation},
  url          = {https://www.slideshare.net/slideshow/pulmonary-artery-banding-why-when-how/284155723},
  note         = {Accessed: September 21, 2026}
}

@incollection{Naghdi1972,
  author    = {Naghdi, P. M.},
  title     = {The Theory of Shells and Plates},
  booktitle = {Handbuch der Physik},
  editor    = {Fl\"ugge, Siegfried},
  volume    = {VIa/2},
  pages     = {425--640},
  publisher = {Springer-Verlag},
  address   = {Berlin},
  year      = {1972}
}

@article{BlouzaLeDret2001,
  author  = {Blouza, Adel and Le Dret, Herv\'e},
  title   = {Nagdhi's Shell Model: Existence, Uniqueness and Continuous Dependence on the Midsurface},
  journal = {Journal of Elasticity},
  volume  = {64},
  number  = {2--3},
  pages   = {199--216},
  year    = {2001},
  doi     = {10.1023/A:1015270504666}
}

@article{TambacaTutek2016,
  author  = {Tamba\v{c}a, Josip and Tutek, Zvonimir},
  title   = {A New Linear {Naghdi} Type Shell Model for Shells with Little Regularity},
  journal = {Applied Mathematical Modelling},
  volume  = {40},
  number  = {23--24},
  pages   = {10549--10562},
  year    = {2016},
  doi     = {10.1016/j.apm.2016.07.007}
}

@inproceedings{Koiter1960,
  author    = {Koiter, W. T.},
  title     = {A Consistent First Approximation in the General Theory of Thin Elastic Shells},
  booktitle = {Proceedings of the {IUTAM} Symposium on the Theory of Thin Elastic Shells},
  pages     = {12--33},
  publisher = {North-Holland},
  address   = {Amsterdam},
  year      = {1960}
}

@book{Ciarlet2000,
  author    = {Ciarlet, Philippe G.},
  title     = {Mathematical Elasticity, Volume {III}: Theory of Shells},
  series    = {Studies in Mathematics and Its Applications},
  volume    = {29},
  publisher = {North-Holland},
  address   = {Amsterdam},
  year      = {2000},
  isbn      = {9780444828910}
}

@article{BukacEtAl2013,
  author  = {Buka\v{c}, Martina and \v{C}ani\'{c}, Sun\v{c}ica and Glowinski, Roland and Tamba\v{c}a, Josip and Quaini, Annalisa},
  title   = {Fluid--Structure Interaction in Blood Flow Capturing Non-Zero Longitudinal Structure Displacement},
  journal = {Journal of Computational Physics},
  volume  = {235},
  pages   = {515--541},
  year    = {2013},
  doi     = {10.1016/j.jcp.2012.08.033}
}

@article{MuhaCanic2013,
  author  = {Muha, Boris and \v{C}ani\'{c}, Sun\v{c}ica},
  title   = {Existence of a Weak Solution to a Nonlinear Fluid--Structure Interaction Problem Modeling the Flow of an Incompressible, Viscous Fluid in a Cylinder with Deformable Walls},
  journal = {Archive for Rational Mechanics and Analysis},
  volume  = {207},
  number  = {3},
  pages   = {919--968},
  year    = {2013},
  doi     = {10.1007/s00205-012-0585-5}
}

@article{QuarteroniTuveriVeneziani2000,
  author  = {Quarteroni, Alfio and Tuveri, Massimiliano and Veneziani, Alessandro},
  title   = {Computational Vascular Fluid Dynamics: Problems, Models and Methods},
  journal = {Computing and Visualization in Science},
  volume  = {2},
  number  = {4},
  pages   = {163--197},
  year    = {2000},
  doi     = {10.1007/s007910050039}
}

@book{FormaggiaQuarteroniVeneziani2009,
  editor    = {Formaggia, Luca and Quarteroni, Alfio and Veneziani, Alessandro},
  title     = {Cardiovascular Mathematics: Modeling and Simulation of the Circulatory System},
  series    = {MS\&A: Modeling, Simulation and Applications},
  volume    = {1},
  publisher = {Springer},
  address   = {Milan},
  year      = {2009},
  doi       = {10.1007/978-88-470-1152-6}
}

@article{OshimaEtAl2001,
  author  = {Oshima, Marie and Torii, Ryo and Kobayashi, Toshio and Taniguchi, Nobuyuki and Takagi, K.},
  title   = {Finite Element Simulation of Blood Flow in the Cerebral Artery},
  journal = {Computer Methods in Applied Mechanics and Engineering},
  volume  = {191},
  pages   = {661--671},
  year    = {2001},
  doi     = {10.1016/S0045-7825(01)00307-3}
}

@article{FigueroaEtAl2006,
  author  = {Figueroa, C. Alberto and Vignon-Clementel, Irene E. and Jansen, Kenneth E. and Hughes, Thomas J. R. and Taylor, Charles A.},
  title   = {A Coupled Momentum Method for Modeling Blood Flow in Three-Dimensional Deformable Arteries},
  journal = {Computer Methods in Applied Mechanics and Engineering},
  volume  = {195},
  number  = {41--43},
  pages   = {5685--5706},
  year    = {2006},
  doi     = {10.1016/j.cma.2005.11.011}
}

@techreport{PHC4_2019_PediatricHeartSurgery,
  author      = {{Pennsylvania Health Care Cost Containment Council}},
  title       = {Pediatric and Congenital Heart Surgery: 2014--2017 Data},
  institution = {Pennsylvania Health Care Cost Containment Council},
  address     = {Harrisburg, Pennsylvania},
  year        = {2019},
  month       = jun,
  url         = {https://www.phc4.org/wp-content/uploads/cardiac_pediatric2017report.pdf},
  note        = {Accessed September 5, 2026}
}

@article{angeli2012pulmonary,
  author  = {Angeli, Emanuela and Pace Napoleone, Carlo and
             Turci, Simone and Oppido, Guido and Gargiulo, Gaetano},
  title   = {Pulmonary Artery Banding},
  journal = {Multimedia Manual of Cardio-Thoracic Surgery},
  volume  = {2012},
  pages   = {mms010},
  year    = {2012},
  doi     = {10.1093/mmcts/mms010},
  url     = {https://doi.org/10.1093/mmcts/mms010}
}

@book{ciarlet2000shells,
  author    = {Ciarlet, P. G.},
  title     = {Mathematical Elasticity, Volume {III}: Theory of Shells},
  publisher = {North-Holland},
  address   = {Amsterdam},
  year      = {2000},
  isbn      = {978-0-444-82891-0},
  url       = {https://shop.elsevier.com/books/theory-of-shells/ciarlet/978-0-444-82891-0}
}

@book{chapelle2003finite,
  author    = {Chapelle, D. and Bathe, K.-J.},
  title     = {The Finite Element Analysis of Shells -- Fundamentals},
  publisher = {Springer},
  address   = {Berlin},
  year      = {2003},
  doi       = {10.1007/978-3-662-05229-7},
  url       = {https://doi.org/10.1007/978-3-662-05229-7}
}

@article{canic2017dimension,
  author  = {{\v{C}}ani{\'c}, S. and Galovi{\'c}, M. and Ljulj, M. and Tamba{\v{c}}a, J.},
  title   = {A dimension-reduction based coupled model of mesh-reinforced shells},
  journal = {SIAM Journal on Applied Mathematics},
  year    = {2017},
  volume  = {77},
  number  = {2},
  pages   = {744--769},
  doi     = {10.1137/16M1088181},
  url     = {https://doi.org/10.1137/16M1088181}
}

@article{bukac2015partitioned,
  author  = {Buka{\v{c}}, M. and {\v{C}}ani{\'c}, S. and Muha, B.},
  title   = {A partitioned scheme for fluid--composite structure interaction problems},
  journal = {Journal of Computational Physics},
  year    = {2015},
  volume  = {281},
  pages   = {493--517},
  doi     = {10.1016/j.jcp.2014.10.045},
  url     = {https://doi.org/10.1016/j.jcp.2014.10.045}
}

@article{canic2015stability,
  author  = {{\v{C}}ani{\'c}, S. and Muha, B. and Buka{\v{c}}, M.},
  title   = {Stability of the kinematically coupled beta-scheme for fluid--structure interaction problems in hemodynamics},
  journal = {International Journal for Numerical Analysis and Modeling},
  year    = {2015},
  volume  = {12},
  number  = {1},
  pages   = {54--80},
  url     = {https://www.global-sci.com/ijnam/article/view/10188}
}

@article{sharma2012pulmonary,
  author  = {Sharma, R.},
  title   = {Pulmonary artery banding: Rationale and possible indications in the current era},
  journal = {Annals of Pediatric Cardiology},
  year    = {2012},
  volume  = {5},
  number  = {1},
  pages   = {40--43},
  pmid    = {22529600},
  url     = {https://pubmed.ncbi.nlm.nih.gov/22529600/}
}

@article{trusler1972method,
  author  = {Trusler, G. A. and Mustard, W. T.},
  title   = {A method of banding the pulmonary artery for large isolated ventricular septal defect with and without transposition of the great arteries},
  journal = {The Annals of Thoracic Surgery},
  year    = {1972},
  volume  = {13},
  number  = {4},
  pages   = {351--355},
  doi     = {10.1016/S0003-4975(10)64866-7},
  url     = {https://doi.org/10.1016/S0003-4975(10)64866-7}
}

@article{durandy2013pulmonary,
  author  = {Durandy, Y.},
  title   = {Pulmonary artery banding is still a valuable option},
  journal = {Frontiers in Pediatrics},
  year    = {2013},
  volume  = {1},
  pages   = {17},
  doi     = {10.3389/fped.2013.00017},
  url     = {https://doi.org/10.3389/fped.2013.00017}
}

@article{baker2013effects,
  author  = {Baker, C. E. and Corsini, C. and Cosentino, D. and Dubini, G. and Pennati, G. and Migliavacca, F. and Hsia, T. Y. and {{Modeling of Congenital Hearts Alliance Investigators}}},
  title   = {Effects of pulmonary artery banding and retrograde aortic arch obstruction on the hybrid palliation of hypoplastic left heart syndrome},
  journal = {The Journal of Thoracic and Cardiovascular Surgery},
  year    = {2013},
  volume  = {146},
  number  = {6},
  pages   = {1341--1348},
  doi     = {10.1016/j.jtcvs.2013.01.038},
  url     = {https://doi.org/10.1016/j.jtcvs.2013.01.038}
}

@article{corno2006noncircular,
  author  = {Corno, A. F. and Prosi, M. and Fridez, P. and Zunino, P. and Quarteroni, A. and von Segesser, L. K.},
  title   = {The non-circular shape of {FloWatch-PAB} prevents the need for pulmonary artery reconstruction after banding: Computational fluid dynamics and clinical correlations},
  journal = {European Journal of Cardio-Thoracic Surgery},
  year    = {2006},
  volume  = {29},
  number  = {1},
  pages   = {93--99},
  doi     = {10.1016/j.ejcts.2005.10.029},
  url     = {https://doi.org/10.1016/j.ejcts.2005.10.029}
}

@incollection{liu2015effects,
  author    = {Liu, J. L. and Zhang, W. M. and Yan, Q. and Hong, H. F. and Liu, J. F.},
  title     = {Effects of band position on hemodynamics of pulmonary artery: A numerical study of patient-specific virtual procedure},
  booktitle = {World Congress on Medical Physics and Biomedical Engineering, June 7--12, 2015, Toronto, Canada},
  editor    = {Jaffray, David A.},
  series    = {IFMBE Proceedings},
  volume    = {51},
  pages     = {893--896},
  publisher = {Springer},
  address   = {Cham},
  year      = {2015},
  doi       = {10.1007/978-3-319-19387-8_218},
  url       = {https://doi.org/10.1007/978-3-319-19387-8_218}
}

@article{bukac2013fluid,
  author  = {Buka{\v{c}}, M. and {\v{C}}ani{\'c}, S. and Glowinski, R. and Tamba{\v{c}}a, J. and Quaini, A.},
  title   = {Fluid--structure interaction in blood flow capturing non-zero longitudinal structure displacement},
  journal = {Journal of Computational Physics},
  year    = {2013},
  volume  = {235},
  pages   = {515--541},
  doi     = {10.1016/j.jcp.2012.08.033},
  url     = {https://doi.org/10.1016/j.jcp.2012.08.033}
}

@article{vignonclementel2010outflow,
  author  = {Vignon-Clementel, I. E. and Figueroa, C. A. and Jansen, K. E. and Taylor, C. A.},
  title   = {Outflow boundary conditions for three-dimensional simulations of non-periodic blood flow and pressure fields in deformable arteries},
  journal = {Computer Methods in Biomechanics and Biomedical Engineering},
  year    = {2010},
  volume  = {13},
  number  = {5},
  pages   = {625--640},
  doi     = {10.1080/10255840903413565},
  url     = {https://doi.org/10.1080/10255840903413565}
}

@article{zambrano2018image,
  author  = {Zambrano, B. A. and McLean, N. A. and Zhao, X. and others},
  title   = {Image-based computational assessment of vascular wall mechanics and hemodynamics in pulmonary arterial hypertension patients},
  journal = {Journal of Biomechanics},
  year    = {2018},
  volume  = {68},
  pages   = {84--92},
  doi     = {10.1016/j.jbiomech.2017.12.022},
  url     = {https://doi.org/10.1016/j.jbiomech.2017.12.022}
}

@article{pant2014methodological,
  author  = {Pant, S. and Fabr{\`e}ges, B. and Gerbeau, J. F. and Vignon-Clementel, I. E.},
  title   = {A methodological paradigm for patient-specific multi-scale computational fluid dynamics simulations: From clinical measurements to parameter estimates for individual analysis},
  journal = {International Journal for Numerical Methods in Biomedical Engineering},
  year    = {2014},
  volume  = {30},
  number  = {12},
  pages   = {1614--1648},
  doi     = {10.1002/cnm.2692},
  url     = {https://doi.org/10.1002/cnm.2692}
}

@article{moireau2011reduced,
  author  = {Moireau, P. and Chapelle, D.},
  title   = {Reduced-order unscented {Kalman} filtering with application to parameter identification in large-dimensional systems},
  journal = {ESAIM: Control, Optimisation and Calculus of Variations},
  year    = {2011},
  volume  = {17},
  number  = {2},
  pages   = {380--405},
  doi     = {10.1051/cocv/2010006},
  url     = {https://doi.org/10.1051/cocv/2010006}
}

@article{bertoglio2012sequential,
  author  = {Bertoglio, C. and Moireau, P. and Gerbeau, J. F.},
  title   = {Sequential parameter estimation for fluid--structure problems: Application to hemodynamics},
  journal = {International Journal for Numerical Methods in Biomedical Engineering},
  year    = {2012},
  volume  = {28},
  number  = {4},
  pages   = {434--455},
  doi     = {10.1002/cnm.1476},
  url     = {https://doi.org/10.1002/cnm.1476}
}

@article{arthurs2020flexible,
  author  = {Arthurs, C. J. and Xiao, N. and Moireau, P. and Schaeffter, T. and Figueroa, C. A.},
  title   = {A flexible framework for sequential estimation of model parameters in computational hemodynamics},
  journal = {Advanced Modeling and Simulation in Engineering Sciences},
  year    = {2020},
  volume  = {7},
  pages   = {48},
  doi     = {10.1186/s40323-020-00186-x},
  url     = {https://doi.org/10.1186/s40323-020-00186-x}
}

@article{deus2024efficient,
  author  = {Deus, J. and Martin, E.},
  title   = {Efficient cardiovascular parameters estimation for fluid--structure simulations using gappy proper orthogonal decomposition},
  journal = {Annals of Biomedical Engineering},
  year    = {2024},
  volume  = {52},
  pages   = {3037--3052},
  doi     = {10.1007/s10439-024-03568-z},
  url     = {https://doi.org/10.1007/s10439-024-03568-z}
}

@article{BukacEtAl2019Stents,
  author  = {Buka{\v{c}}, Martina and
             {\v{C}}ani{\'c}, Sun{\v{c}}ica and
             Tamba{\v{c}}a, Josip and Wang, Yifan},
  title   = {Fluid--structure interaction between pulsatile
             blood flow and a curved stented coronary artery
             on a beating heart: A four stent computational study},
  journal = {Computer Methods in Applied Mechanics and Engineering},
  year    = {2019},
  volume  = {350},
  pages   = {679--700},
  doi     = {10.1016/j.cma.2019.03.034}
}

@article{CanicEtAl2019FluidMeshShell,
  author  = {{\v{C}}ani{\'c}, Sun{\v{c}}ica and
             Gali{\'c}, Marija and Ljulj, Matko and Muha, Boris and
             Tamba{\v{c}}a, Josip and Wang, Yifan},
  title   = {Analysis of a linear {3D}
             fluid--mesh--shell interaction problem},
  journal = {Zeitschrift f{\"u}r angewandte Mathematik und Physik},
  year    = {2019},
  volume  = {70},
  number  = {2},
  pages   = {44},
  doi     = {10.1007/s00033-019-1087-1}
}

@misc{CanicEtAl2024Stenosis,
  author        = {{\v{C}}ani{\'c}, Sun{\v{c}}ica and
                   Guo, Shihan and Wang, Yifan and
                   Yue, Xiaohe and Zheng, Haibiao},
  title         = {Extended one-dimensional reduced model
                   for blood flow within a stenotic artery},
  year          = {2024},
  eprint        = {2409.16262},
  archivePrefix = {arXiv},
  primaryClass  = {math.NA},
  doi           = {10.48550/arXiv.2409.16262},
  note          = {arXiv preprint arXiv:2409.16262}
}

@article{GuoEtAl2025FPSI,
  author  = {Guo, Shihan and Sun, Yizhong and Wang, Yifan and
             Yue, Xiaohe and Zheng, Haibiao},
  title   = {A Fully Parallelizable Loosely Coupled Scheme
             for Fluid-Poroelastic Structure Interaction Problems},
  journal = {SIAM Journal on Scientific Computing},
  year    = {2025},
  volume  = {47},
  number  = {4},
  pages   = {B951--B975},
  doi     = {10.1137/24M1695713}
}

@misc{GuoEtAl2025RobinRobin,
  author        = {Guo, Shihan and Lin, Ping and Wang, Yifan and
                   Yue, Xiaohe and Zheng, Haibiao},
  title         = {An Unconditionally Stable Explicit
                   {Robin--Robin} Partitioned Scheme
                   for Fluid--Structure Interaction},
  year          = {2025},
  eprint        = {2510.13096},
  archivePrefix = {arXiv},
  primaryClass  = {math.NA},
  doi           = {10.48550/arXiv.2510.13096},
  note          = {arXiv preprint arXiv:2510.13096}
}

@misc{WangLeeCanic2026ErrorAnalysis,
  author        = {Wang, Yifan and Lee, Jeonghun and
                   {\v{C}}ani{\'c}, Sun{\v{c}}ica},
  title         = {Error Analysis of the Explicit Splitting Scheme
                   for Fluid-Poroelastic Structure Interaction Problems},
  year          = {2026},
  eprint        = {2603.21088},
  archivePrefix = {arXiv},
  primaryClass  = {math.NA},
  doi           = {10.48550/arXiv.2603.21088},
  note          = {arXiv preprint arXiv:2603.21088}
}

@misc{WangLeeCanic2026SecondOrder,
  author        = {Wang, Yifan and Lee, Jeonghun and
                   {\v{C}}ani{\'c}, Sun{\v{c}}ica},
  title         = {Second order explicit splitting scheme
                   for fluid-poroelastic structure interaction problems},
  year          = {2026},
  eprint        = {2606.19811},
  archivePrefix = {arXiv},
  primaryClass  = {math.NA},
  doi           = {10.48550/arXiv.2606.19811},
  note          = {arXiv preprint arXiv:2606.19811}
}
\end{document}